\newtheorem{art}[subsection]{}
\newcommand{\dpa}{{\d'_{\rm P}}}
\newcommand{\dpb}{{\d''_{\rm P}}}
\title{On semipositive piecewise linear functions in non-archimedean analytic geometry}
\author[W.~Gubler]{Walter Gubler}
\address{W. Gubler, Mathematik, Universit{\"a}t 
	Regensburg, 93040 Regensburg, Germany}
\email{walter.gubler@mathematik.uni-regensburg.de}
\author[J.~Rabinoff]{Joseph Rabinoff}
\address{J. Rabinoff, Department of Mathematics, Trinity College of Arts and Sciences, Duke University, Durham, NC 27708, USA}
\email{jdr@math.duke.edu}
\let\mathbb=\mathbf
\def\artref#1{\ref{#1}}
 \thanks{W.~Gubler
	was supported by the collaborative research 
	center SFB 1085 \emph{Higher Invariants - Interactions between Arithmetic Geometry and Global Analysis} funded by the Deutsche Forschungsgemeinschaft.}
\begin{document}
	
\begin{abstract}
  In this paper, we generalize results on Zhang's semipositive model metrics from the algebraic setting to strictly analytic spaces over a non-trivially valued non-Archimedean field. We prove stability  under pointwise limits and under forming the maximum. We also prove the maximum principle. As tools, we use a local lifting theorem from the special fiber to the generic fiber of a model and an affinoid Bieri--Groves theorem.
  \end{abstract}

\keywords{{Berkovich spaces, tropicalization,  plurisubharmonic functions}} 
\subjclass{{Primary 14G40; Secondary 31C05, 32P05, 32U05}}

\maketitle


\section{Introduction}

In Diophantine Geometry, an important tool is the height of a proper variety $X$ with respect to a given metrized line bundle $L$. This height is defined as an arithmetic self intersection number  introduced by Gillet and Soul\'e \cite{gillet_soule_90} in higher dimensions. Zhang \cite{zhang95} realized that the contribution of a non-Archimedean place $v$ to this arithmetic intersection product is given by a $v$-adic metric associated to a model $(\sX,\sL)$ of $(X,L)$ over the valuation ring of $v$. Such a model metric is called \emph{semipositive} if the model $\sL$ is a nef line bundle on the proper variety $\sX$. More generally,  Zhang introduced \emph{semipositive continuous metrics} as uniform limits of semipositive model metrics. He showed that the canonical metrics of polarized dynamical systems are semipositive continuous metrics. This point of view turned out to be very important in the proof of the Bogomolov conjecture by Ullmo \cite{ullmo98:positivite_manin_mumford} and Zhang \cite{zhang98}. As pointed out in \cite{gubler98:local_heights_subvariet}, the model metrics can be studied over any non-trivially valued non-Archimedean field working with  Berkovich spaces and their formal models leading to a theory of local heights for proper varieties.

Let $k$ be a field that is complete with respect to a nontrivial, non-Archimedean  absolute value. 
The theory of subharmonic functions on a smooth, strictly analytic $k$-curve $X$ has been  worked out by Thuillier in his thesis~\cite{thuillier05:thesis}. In higher dimensions,  we still lack a completely satisfactory theory of plurisubharmonic   functions on Berkovich spaces. As work of Boucksom and Jonsson \cite{BJ22globalpluripotentialtheorytrivially} in the algebraic setting suggests, such a theory might be build upon regularization by semipositive piecewise linear functions. This means that plurisubharmonic functions should be given as decreasing limits of semipositive piecewise linear functions. The latter are the local analogues of semipositive model metrics. The crucial role of semipositive piecewise linear functions is then supported by Zhang's global approach to semipositive continuous metrics. 

In this paper, we study semipositivity of piecewise linear functions on a good strictly $k$-analytic space $X$. This completes our study of such functions in \cite{gubler_rabinoff_jell:harmonic_trop}, and generalizes results given in \cite{gubler_martin19:zhangs_metrics} from the algebraic to the analytic setting. 

\subsection*{Piecewise linear functions}
Let $X$ be a good strictly $k$-analytic space $X$ (see Section~\ref{section: preliminaries} for more details on the prerequisites on non-Archimedean analytic spaces). 
We say that $h\colon X \to \R$ is \emph{piecewise linear} if for any $x \in X$ there are finitely many strictly $k$-analytic domains $V_j$ and invertible analytic functions $g_j$ on $V_j$ such that $\bigcup V_j$ is a neighbourhood of $x$ in $X$ and such that $h=\log |g_j|$ on $V_j$.
An \emph{$\R$-PL function} is a real function which is locally an $\R$-linear combination of piecewise linear functions.  

The semipositivity condition can be best described in terms of Temkin's reduction of germs.  
To any point $x\in X$, Temkin associates~\cite{temkin00:local_properties} the \defi{reduction} $\red(X,x)$ of the germ $(X,x)$; this is a kind of Riemann--Zariski space defined over the residue field~$\td k$ which keeps track of all local formal models over the valuation ring of $k$.  There is a canonical homomorphism $u\mapsto L_u(x)$ from the space of germs of PL functions at $x$ to $\Pic(\red(X,x))$.  This homomorphism extends to a homomorphism from the space of germs of $\R$-PL functions to $\Pic(\red(X,x)) \otimes \R$. We say that an $\R$-PL function $h$ is \emph{semipositive at $x\in X$} if  $L_u(x)$ is nef in $\Pic(\red(X,x)) \otimes \R$.  See \cite[Section 6]{gubler_rabinoff_jell:harmonic_trop} for more details.

\subsection*{Lifting theorems} 
The idea is to relate the notion of numerical effectivity of the line bundle $L_u(x)$ on the reduction, which is a condition that is checked on curves embedded in the \emph{special} fiber of some formal model, to the plurisubharmonicity of $u$, which is a condition that is by definition checked on curves embedded in the \emph{generic} fiber.  This is made possible by a suitable \emph{local lifting theorem}, which allows us locally to lift a subvariety of the special fiber of a formal model to a Zariski-closed subspace of the analytic generic fiber.  This is the technical heart of the paper.  We only describe here in the introduction the affine lifting theorem, given in Theorem~\ref{thm:affine lifting}, which is of global nature.

\begin{thm} \label{intro-thm: affine lifting}
Let $X$ be a strictly $k$-affinoid space with affine formal model $\fX$. Then for every $d$-dimensional irreducible closed subvariety $W$ of the special fibre of $\fX$, there is an irreducible Zariski closed subspace $Y$ of $X$ of dimension $d$ such that $W$ is an irreducible component of the special fibre of the formal closure of $Y$ in $\fX$.
\end{thm}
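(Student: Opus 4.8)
Throughout write $\mathfrak{X}=\operatorname{Spf}\mathcal{A}$, so that $A=\mathcal{A}\otimes_{k^\circ}k$ is the affinoid algebra of $X$, $\widetilde{\mathcal{A}}=\mathcal{A}\otimes_{k^\circ}\widetilde k$ is the coordinate ring of the special fibre $\mathfrak{X}_s$, and the surjection $\mathcal{A}\twoheadrightarrow\widetilde{\mathcal{A}}$ lets us lift functions from $\mathfrak{X}_s$ to $\mathfrak{X}$; let $\mathfrak{p}\subseteq\widetilde{\mathcal{A}}$ be the prime of $W$, $\eta_W$ its generic point, and $c=\operatorname{ht}\mathfrak{p}$. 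The plan begins with a reformulation. For an irreducible Zariski closed $Y\subseteq X$ let $\mathfrak{Y}$ be its formal closure in $\mathfrak{X}$, i.e.\ the $k^\circ$-flat closed formal subscheme of $\mathfrak{X}$ generated by $Y$; then $\mathfrak{Y}_s\hookrightarrow\mathfrak{X}_s$ is a closed immersion, and functoriality of the reduction map identifies $\operatorname{red}_{\mathfrak{X}}|_Y$ with the surjection $Y\twoheadrightarrow\mathfrak{Y}_s$ followed by $\mathfrak{Y}_s\hookrightarrow\mathfrak{X}_s$, while $\dim\mathfrak{Y}_s=\dim Y$ by $k^\circ$-flatness. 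Hence it is enough to exhibit an irreducible Zariski closed $Y\subseteq X$ with $\dim Y=d$ and $\eta_W\in\operatorname{red}_{\mathfrak{X}}(Y)$: then $W=\overline{\{\eta_W\}}\subseteq\mathfrak{Y}_s$ and $\dim\mathfrak{Y}_s=d=\dim W$, so $W$ is an irreducible component of $\mathfrak{Y}_s$. Note that $\eta_W\in\operatorname{red}_{\mathfrak{X}}(Y)$ already forces $\dim Y\ge d$, so the real task is to produce a $Y$ with $\eta_W$ in its reduction that is not too large.

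The candidate is a complete intersection cut out by lifts of a system of parameters of $\mathfrak p$. Using that $\mathfrak{p}$ is contained in no subvariety of $\mathfrak{X}_s$ of dimension $>d$, together with prime avoidance, choose $\widetilde g_1,\dots,\widetilde g_c\in\mathfrak{p}$ so that $\mathfrak{p}$ is minimal over $(\widetilde g_1,\dots,\widetilde g_c)$ — so $W$ is an irreducible component of $V(\widetilde g_1,\dots,\widetilde g_c)$ — and so that $\widetilde g_1,\dots,\widetilde g_c$ cut the component of $\mathfrak{X}_s$ through $\eta_W$ down to dimension $d$. Lift them to $g_1,\dots,g_c\in\mathcal{A}$, which are power-bounded on $X$, and set $Y_0:=V(g_1,\dots,g_c)\subseteq X$. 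A dimension computation using $k^\circ$-flatness shows that the irreducible component of $Y_0$ through any point reducing to $\eta_W$ has dimension $\le d$: its formal closure has special fibre contained in $V(\widetilde g_i)$, which near $\eta_W$ is purely $d$-dimensional. It therefore remains to prove $\eta_W\in\operatorname{red}_{\mathfrak{X}}(Y_0)$, i.e.\ that $Y_0$ meets the (non-affinoid) tube $\operatorname{red}_{\mathfrak{X}}^{-1}(\eta_W)$; granting this, the component $Y$ of $Y_0$ through such a point is irreducible, satisfies $W\subseteq\mathfrak{Y}_s\subseteq V(\widetilde g_i)$ (so $W$ is a component of $\mathfrak{Y}_s$), and has dimension exactly $d$ by the preceding bound, which completes the proof.

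The assertion $\eta_W\in\operatorname{red}_{\mathfrak{X}}(Y_0)$ is the technical core, and it is local at $\eta_W$. Let $\mathfrak{P}\subseteq\mathcal{A}$ be the prime over $\eta_W$; then $\mathfrak{P}\supseteq k^{\circ\circ}\mathcal{A}$, the ring $\mathcal{A}_{\mathfrak{P}}/k^{\circ\circ}\mathcal{A}_{\mathfrak{P}}=\widetilde{\mathcal{A}}_{\mathfrak{p}}$ has dimension $c$, and $k^\circ$-flatness of $\mathcal{A}$ forces $\dim\mathcal{A}_{\mathfrak{P}}=c+1$ with $g_1,\dots,g_c$ together with a nonzero topologically nilpotent element of $k$ forming a system of parameters of $\mathcal{A}_{\mathfrak{P}}$. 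Krull's height theorem then gives $\dim\bigl(\mathcal{A}/(g_1,\dots,g_c)\bigr)_{\mathfrak{P}}=1$, so this ring survives passage to the generic fibre, $\bigl(\mathcal{A}/(g_1,\dots,g_c)\bigr)_{\mathfrak{P}}\otimes_{k^\circ}k\ne 0$; unwinding, there is a Zariski closed subspace of $Y_0$ whose reduction contains $V(\mathfrak{p})=W$, so in particular $\eta_W\in\operatorname{red}_{\mathfrak{X}}(Y_0)$.

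The main obstacle — and where I expect the argument to have real content — is to carry all of this out over an arbitrary non-trivially valued $k$. When $k$ is not discretely valued, $k^{\circ\circ}$ is not finitely generated, so the dimension theory of $\mathcal{A}_{\mathfrak{P}}$ and of the cut-down algebra, the behaviour of the components of the special fibre and of their dimensions under reduction, and the very choice of the $\widetilde g_i$ that simultaneously normalize $\mathfrak{p}$ and control the relevant component of $\mathfrak{X}_s$, all become delicate; this is presumably why the paper proves the affinoid Bieri--Groves theorem and isolates a local lifting theorem first, deducing the present global statement from them. A safe strategy for the write-up is thus: (i) establish the local lifting statement near $\eta_W$ (via the structure of the local ring $\mathcal{A}_{\mathfrak{P}}$, or after an admissible formal blow-up of $\mathfrak{X}$ that does not disturb $\eta_W$), (ii) use the Bieri--Groves theorem to keep track of dimensions and avoid destroying the component $W$ when flattening, and (iii) assemble these into the complete-intersection argument above.
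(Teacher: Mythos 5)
Your reformulation and assembly are fine: it does suffice to produce an irreducible Zariski-closed $Y\subset X$ of dimension $d$ whose formal closure has $\eta_W$ in its special fibre, and your deduction that the component of $Y_0=V(g_1,\dots,g_c)$ through a point of the tube $\red_\fX\inv(\eta_W)$ has dimension exactly $d$ (using that $W$ is the unique component of $V(\td g_1,\dots,\td g_c)$ through $\eta_W$ and that special fibres of formal closures are pure-dimensional of the same dimension) is correct. Note, however, that your route is genuinely different from the paper's: the paper argues by induction on $\dim X$, choosing a Noether normalization compatible with $W$ so that the tube $\red\inv(\eta_W)$ becomes a nonempty open subset of the fibre $X_{\xi_d}$ over the Gauss point of $\bB^d$, then finds a point of this tube whose image under the induced finite map to $\bB^{n-d}_{\sH(\xi_d)}$ has coordinates algebraic over $\Frac(k\angles{x_1,\dots,x_d})$, so that its Zariski closure is a \emph{proper} Zariski-closed subspace; replacing $\fX$ by the formal closure of that subspace and iterating yields $Y$. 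No complete-intersection construction or local dimension theory of the model algebra is used.

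The genuine gap in your proposal is exactly the step you flag and then defer: the claim that $(g_1,\dots,g_c)$, localized at the prime $\mathfrak{P}$ of the model algebra corresponding to $\eta_W$, leaves a prime not containing a pseudo-uniformizer (equivalently, that $Y_0$ meets the tube over $\eta_W$). Your justification — the flat-fibre dimension formula $\dim$ (local ring at $\mathfrak{P}$) $=c+1$, ``system of parameters'', and Krull's height theorem — is valid only when the model algebra is Noetherian, i.e.\ when $k$ is discretely valued; for general non-trivially valued $k$ the admissible algebra is non-Noetherian and none of these tools apply off the shelf, so as written you have proved the theorem only in the discretely valued case, whereas the whole point of the analytic statement (as of \cite[Theorem~4.1]{gubler_kuenneman19:positivity} in the algebraic case) is general $k$. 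Moreover, your proposed repair does not work: the local lifting theorem (Theorem~\ref{local lifting theorem}) produces a $d$-dimensional Zariski-closed subspace only of a small analytic neighbourhood $U$ of some point of the tube, which is precisely \emph{not} a Zariski-closed subspace of $X$ — overcoming that is the content of the affine theorem — and in the paper its proof is independent of and not an input to Theorem~\ref{thm:affine lifting}; the affinoid Bieri--Groves theorem plays no role in either lifting theorem (it is used for the maximum principle). To complete your approach you would need an honest dimension-theoretic statement for topologically finitely presented flat $k^\circ$-algebras (a fibre-dimension formula and a principal-ideal-type bound at the prime $\mathfrak{P}$), and that missing input is the technical heart of the problem, not a routine verification.
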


For flat algebraic (but not necessarily affine) schemes over the valuation ring of $k$, this was shown in \cite[Theorem 4.1]{gubler_kuenneman19:positivity}. In the analytic setting, Theorem \ref{intro-thm: affine lifting} does not imply a global result, as a Zariski-closed subspace of an affinoid domain in an analytic space does not usually extend to the whole space.  This is the reason that we can prove only locally a lifting theorem in case of an arbitrary good quasi-compact strictly $k$-analytic space (Theorem \ref{local lifting theorem}). The argument is independent of Theorem \ref{intro-thm: affine lifting} and is due to Antoine Ducros.

\subsection{Semipositivity and restriction to curves} The local lifting theorem shows that semipositivity of $\R$-PL functions is the analogue of plurisubharmonicity in the complex setting, where psh is defined in terms of restriction to curves. 

\begin{thm} \label{intro-thm: semipositivity and curve restriction}
Let $x$ be a point of  a good strictly analytic space $X$ over $k$ and let $h\colon X \to \R$ be $\R$-PL. Then $h$ is semipositive at $x$ if and only if for every sufficiently small strictly analytic neighbourhood $U$ of $x$ and every Zariski-closed curve $C$ in $U$, the restriction of $h$ to $C$ is a semipositive $\R$-PL function.
\end{thm}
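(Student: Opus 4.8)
The plan is to translate both conditions in the theorem into statements about the class $L_u(x)\in\Pic(\red(X,x))\otimes\R$ attached to the relevant germ $u$ of $h$, and to move back and forth between complete curves in the special fibres of formal models and Zariski-closed analytic curves by means of the local lifting theorem (Theorem~\ref{local lifting theorem}), which is the only deep input.

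For the implication ``$h$ semipositive at $x$ $\Rightarrow$ restriction to nearby curves is semipositive'' I would argue as follows. If $y$ lies in an open $U\subseteq X$ and $C\subseteq U$ is a Zariski-closed curve through $y$, the closed immersion of germs $(C,y)\hookrightarrow(X,y)$ induces a morphism of reductions $\red(C,y)\to\red(X,y)$ under which $L_{u|_C}(y)$ is the pull-back of $L_u(y)$; since pull-backs of nef classes are nef, semipositivity of $h$ at $y$ forces semipositivity of $h|_C$ at $y$. Hence it suffices to produce a neighbourhood $U$ of $x$ on which $h$ is semipositive at every point, that is, to know that the semipositive locus of $h$ is open. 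For this I would use that near $x$ the $\R$-PL function $h$ is governed by a single formal model $\fU$ of an affinoid neighbourhood $U$ of $x$, so that $L_u(y)$ is represented for all $y\in U$ by one fixed line bundle $L$ on the special fibre of $\fU$; a constructibility argument — this is where the affinoid Bieri--Groves theorem enters — then shows that the set of $y\in U$ at which $L$ fails to be nef on the Zariski closure of $\red_\fU(y)$ is closed and avoids $x$.

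The substantial direction is the converse. Assume $h|_C$ is a semipositive $\R$-PL function for every Zariski-closed curve $C$ in every sufficiently small neighbourhood of $x$. To see that $L_u(x)$ is nef it is enough to show $\deg_L(\bar C)\ge0$ for every complete curve $\bar C$ representing a curve class in $\red(X,x)$, where, after passing to a small affinoid neighbourhood $U$ of $x$ and a formal model $\fU$ over which $u$ is defined, such a $\bar C$ may be taken to be a curve in the special fibre of $\fU$ lying in the Zariski closure of $\red_\fU(x)$. Applying the local lifting theorem to $\bar C$ (shrinking $U$ if necessary) produces an irreducible Zariski-closed curve $C$ in $U$ such that $\bar C$ is an irreducible component of the special fibre of the formal closure of $C$ in $\fU$. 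Let $y\in C$ be the point whose reduction is the generic point of $\bar C$; then $y\in U$, the restriction of $L$ to the special fibre of the formal closure of $C$ represents $L_{u|_C}(y)$, and $\deg_L(\bar C)$ equals the degree whose nonnegativity is equivalent to semipositivity of $h|_C$ at $y$ in the one-dimensional theory of \cite{gubler_rabinoff_jell:harmonic_trop} (going back to Thuillier). Since $h|_C$ is semipositive on $C$ by hypothesis, $\deg_L(\bar C)\ge0$, as needed.

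The main obstacle I expect is the reduction-theoretic bookkeeping in the last step: one must check that the germ $(C,y)$ sits compatibly inside the Riemann--Zariski space $\red(X,x)$, that the curve $\bar C$ delivered by the lifting theorem is exactly the one along which $L_u(x)$ is being tested, and that no completeness is lost when passing from the finite-type special fibre of $\fU$ to $\red(X,x)$, so that nefness in $\Pic(\red(X,x))\otimes\R$ is genuinely detected by the lifted curves. A lesser point, needed in the easy direction, is the openness of the semipositive locus, which rests on the local finiteness of $\R$-PL functions together with the Bieri--Groves theorem.
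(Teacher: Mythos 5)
The easy direction is fine in substance, but note that openness of the semipositive locus is not a constructibility/Bieri--Groves matter: it is immediate from \cite[Propositions~7.1 and~7.6]{gubler_rabinoff_jell:harmonic_trop}, since semipositivity at $x$ is equivalent to the existence of a strictly affinoid neighbourhood $U$ of $x$ and a formal model $(\fU,\fL)$ of $(U,\sO_U)$ with $\fL$ nef, which then witnesses semipositivity at every nearby point; the affinoid Bieri--Groves theorem plays no role here (in the paper it is used only for the maximum principle).

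The genuine gap is in the converse. You assert that applying the local lifting theorem (Theorem~\ref{local lifting theorem}) to a proper test curve $\bar C\subset\fW_s$ produces a Zariski-closed curve $C$ ``such that $\bar C$ is an irreducible component of the special fibre of the formal closure of $C$ in $\fW$.'' That is the conclusion of the \emph{affine} lifting theorem (Theorem~\ref{thm:affine lifting}), which does not apply here: the formal model $\fW$ carrying the $\R$-model $\fL$ with $h=h_\fL$ is in general not affine, and a Zariski-closed curve in a small analytic domain does not extend to $W$, so ``the formal closure of $C$ in $\fW$'' does not even make sense. What Theorem~\ref{local lifting theorem} actually gives is a point $z$ with $\red_\fW(z)=\eta$ (the generic point of $\bar C$), a possibly much smaller neighbourhood $U\ni z$, and an irreducible curve $C\subset U$; to relate the hypothesis ``$h|_C$ semipositive'' back to $\deg_\fL(\bar C)\ge 0$ one must (i) invoke Raynaud/Bosch--L\"utkebohmert to get a formal model $\fU$ of $U$ with a morphism $\pi\colon\fU\to\fW$ extending $U\inject W$, (ii) show that the component $Y'$ of the special fibre of the formal closure of $C$ in $\fU$ containing $\red_\fU(z)$ maps \emph{finitely and surjectively} onto $\bar C$ and is itself \emph{proper} --- this uses the criterion of \ref{interior and proper closure} twice, first to see $z\notin\del W$ because $\bar C$ is proper, then $z\notin\del U$, hence $Y'$ proper --- and (iii) apply \cite[Lemma~7.4]{gubler_rabinoff_jell:harmonic_trop} to $(\pi^*\fL)|_{\fC}$ to get $\deg_{\pi^*\fL}(Y')\ge0$ and the projection formula for $Y'\to\bar C$ to conclude $\deg_\fL(\bar C)\ge0$. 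You explicitly list exactly these compatibilities (``the curve $\bar C$ delivered by the lifting theorem is exactly the one along which $L_u(x)$ is being tested,'' ``no completeness is lost'') as expected obstacles rather than resolving them, and as stated your bridging claim is false: the lifted curve tests $\bar C$ only through a finite surjective proper cover living in a different formal model, not directly. This chain of steps is the actual content of the paper's proof and is missing from the proposal.
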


This will be shown in Theorem \ref{semipositivity and curve restriction}. We can then use base change and normalization to reduce semipositivity of $\R$-PL functions to the case of smooth analytic curves (Corollary \ref{restriction to smooth curves}). 
As an application, we deduce immediately from the corresponding result for smooth curves in Thuillier's thesis that the maximum of two semipositive $\Q$-PL functions is again a semipositive $\Q$-PL function (Corollary \ref{maximum of semipositive}). This and the following application of Corollary \ref{restriction to smooth curves} were shown in \cite{gubler_martin19:zhangs_metrics} only for proper algebraic schemes $X$ over $k$.

\begin{thm} \label{intro-thm: pointwise convergence of semipositive PL}
Let $X$ be a good strictly $k$-analytic space of $k$. We assume that the sequence $(h_j)_{j \in N}$ of semipositive $\R$-PL functions converges pointwise to an $\R$-PL function $h$. Then $h$ is semipositive.
\end{thm}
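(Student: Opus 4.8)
The plan is to reduce the assertion to the case of smooth analytic curves by means of the curve-restriction results, and then to invoke the corresponding one-dimensional statement, which rests on Thuillier's potential theory. Concretely, since semipositivity is a pointwise condition, fix a point $x\in X$; we must show that $h$ is semipositive at $x$. By Theorem~\ref{semipositivity and curve restriction} it suffices to prove that for every sufficiently small strictly analytic neighbourhood $U$ of $x$ and every Zariski-closed curve $C$ in $U$, the restriction $h|_C$ is a semipositive $\R$-PL function on $C$. We therefore fix such a neighbourhood $U$ and such a curve $C$. On the one hand, restriction of an $\R$-PL function to a Zariski-closed subspace preserves semipositivity, since a nef class pulls back to a nef class under the induced morphism of Temkin reductions; hence each $h_j|_C$ is a semipositive $\R$-PL function on $C$. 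On the other hand, $h_j|_C\to h|_C$ pointwise on $C$, and $h|_C$ is again $\R$-PL. Thus we have reduced to the case where $X$ itself is a curve.

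Next we reduce further to smooth curves. By Corollary~\ref{restriction to smooth curves}, semipositivity of an $\R$-PL function on a curve may be checked after passing to a suitable ground field extension and to the normalization. These operations are compatible with pointwise limits and preserve the semipositivity of the pulled-back functions $h_j|_C$, and the pullback of $h|_C$ remains $\R$-PL; so we are reduced to proving the following one-dimensional statement: if $h_j$ is a sequence of semipositive $\R$-PL functions on a smooth strictly $k$-analytic curve $C$ converging pointwise to an $\R$-PL function $h$, then $h$ is semipositive.

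For this we appeal to Thuillier's theory~\cite{thuillier05:thesis}. On a smooth analytic curve, an $\R$-PL function --- in particular a continuous one --- is semipositive if and only if it is subharmonic, and a continuous function $g$ is subharmonic if and only if it satisfies the sub-mean-value inequality $g(x)\le\int g\,d\mu$ for the harmonic measures $\mu$ attached to a fundamental system of affinoid neighbourhoods of the points $x$. The crucial point in dimension one is that each such harmonic measure is supported on the finite boundary of the corresponding affinoid domain, so each of these inequalities is a finite convex combination of point evaluations; it therefore passes to the pointwise limit, giving $h(x)\le\int h\,d\mu$ for all $x$ and all relevant $\mu$. Since $h$ is continuous, this shows that $h$ is subharmonic, hence semipositive, which completes the proof.

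The substantive work underlying this argument is Theorem~\ref{semipositivity and curve restriction} (equivalently, the local lifting theorem), which has already been established; granted that, the present statement is essentially formal. The one delicate point --- and the place where I expect the only real difficulty to lie --- is that the one-dimensional input must handle genuine pointwise convergence, not merely uniform or monotone convergence; this is harmless on curves precisely because harmonic measures there are finitely supported, and it also uses essentially that the limit $h$ is assumed $\R$-PL, hence continuous (without which the sub-mean-value inequalities would not force subharmonicity). In higher dimensions harmonic measures are not finitely supported and pointwise convergence provides no domination, so a direct argument bypassing the reduction to curves, in the style of the classical sub-mean-value/Fatou proof, does not go through: the detour through curves is essential.
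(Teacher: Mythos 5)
Your reduction to the one-dimensional case is essentially the paper's: the paper applies Corollary \ref{restriction to smooth curves} directly (your preliminary pass through Theorem \ref{semipositivity and curve restriction} is harmless but redundant), using stability of semipositivity under pullback and base change and the fact that pointwise convergence is preserved by these operations. Where you genuinely diverge is in the curve case. The paper does not argue via harmonic measures: it uses van der Put's local algebraicity to realize the smooth curve inside the analytification of a proper algebraic curve $C$, extends all functions to $\R$-PL functions on $C^\an$, identifies semipositivity at a divisorial point $x$ with nonnegativity of the mass at $x$ of the Chambert--Loir measure of the extended limit function, and then picks a $\Q$-PL bump function $\varphi$ at $x$ so that, by symmetry of local heights (integration by parts, \cite{gubler07:tropical_varieties}), the mass equals $\int h_j\,c_1(\sO_C,\varphi)$ in the limit; since $c_1(\sO_C,\varphi)$ is a \emph{finite discrete} measure supported near $x$, pointwise convergence suffices to interchange limit and integral. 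So both arguments rest on exactly the finiteness phenomenon you isolate --- on curves the relevant measures are finitely supported, so pointwise convergence is enough --- but the paper implements it through Chambert--Loir measures and local heights rather than sub-mean-value inequalities, at the cost of invoking local algebraicity and the extension of PL functions, which your route avoids.

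The one step of yours that is not backed by a quotable statement is the asserted equivalence: a continuous function on a smooth curve is subharmonic in Thuillier's sense if and only if it satisfies the sub-mean-value inequality with respect to the harmonic measures of a fundamental system of affinoid neighbourhoods. Thuillier's definition is by comparison with harmonic functions on strictly affinoid domains. The forward implication (needed for the $h_j$) follows from solving the Dirichlet problem on an affinoid domain with finite boundary; but the converse --- the direction you need for the limit $h$ --- requires an argument: one must prove a maximum principle for continuous functions having only the local sub-mean property, and the sub-mean inequality at a point forces equality only at the finitely many charged boundary points of each neighbourhood, so openness of the set where the maximum is attained needs the whole fundamental system together with density of type 2 and 3 points, before comparing with Poisson solutions. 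This is true and standard in spirit, but as written it is the crux of your proof and is neither proved nor pinned to a precise reference (it is not the definition you quote); you should supply the argument or a citation. For the identification of semipositivity with subharmonicity for $\R$-PL functions on smooth curves you can simply cite Proposition \ref{prop:semipositive.ample.curves}. With that lemma in place, your proof is correct and is a legitimate, somewhat more direct alternative to the paper's.
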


This result, shown in Theorem \ref{pointwise convergence of semipositive PL},  gives also a new proof in the algebraic case. 

We say that an $\R$-PL function $h\colon X \to \R$ is a \defi{locally Fubini--Study} function if the residue $L_{h}(x)$ is semiample in $\Pic(\red(X,x))\otimes \R$ for every $x \in X$. It is clear that locally Fubini--Study functions are semipositive $\R$-PL functions. We  can then prove the following local regularization result. 

\begin{thm} \label{intro-thm: semipositive and LFS}
Let $h\colon X\to \R$ be an $\R$-PL function on a boundaryless  strictly $k$-analytic space $X$ and let $x \in X$. Then $h$ is semipositive at $x$ if and only if there is a strictly analytic neighbourhood $U$ of $x$ and locally Fubini--Study functions $(h_j)_{j \in \N}$ on $U$ converging uniformly to $h|_U$. 
\end{thm}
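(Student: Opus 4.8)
The plan is to prove the two implications separately; the ``if'' direction is a short consequence of the stability of semipositivity under limits, while the ``only if'' direction is the substantial local regularization statement and goes through formal models. For the ``if'' direction, suppose that $U$ is a strictly analytic neighbourhood of $x$ and that $h_j\to h|_U$ uniformly, where each $h_j$ is locally Fubini--Study on $U$. Since a semiample class in $\Pic(\red(U,y))\otimes\R$ is in particular nef, each $h_j$ is semipositive at every point of $U$; the convergence is \emph{a fortiori} pointwise and $h|_U$ is again $\R$-PL, so I would apply Theorem~\ref{pointwise convergence of semipositive PL} to the good strictly $k$-analytic space $U$ (shrinking it to an affinoid if necessary) to conclude that $h|_U$, and hence $h$ at $x$, is semipositive.

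For the ``only if'' direction, assume that $h$ is semipositive at $x$. The first step is to pass to a strictly affinoid neighbourhood $X'$ of $x$ on which $h$ is semipositive at \emph{every} point, and to choose a formal $k^\circ$-model $\fX$ of $X'$ together with a vertical $\R$-Cartier divisor $D$ on $\fX$ such that $h|_{X'}$ is the model function attached to $D$. Because the reduction map from $X'$ onto the special fibre $\fX_s$ is surjective, pointwise semipositivity of $h$ then translates, via the dictionary between $\Pic(\red(X,y))\otimes\R$ and line bundles on special fibres of formal blow-ups, into the statement that $D_s$ is nef on $\fX_s$. The case $\dim X=0$ is trivial since then $\red(X,x)$ is a point, so I assume $\dim X\ge 1$.

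Next I would produce the approximations by a Kleiman-type perturbation. Write $X'=\mathcal M(A)$ and let $\fX_0=\mathrm{Spf}(A^\circ)$ be the canonical formal model, whose special fibre is affine of finite type over $\td k$. Choose a non-trivial admissible formal blow-up $\pi\colon\fX'\to\fX_0$ which dominates $\fX$ and whose exceptional divisor $E$ satisfies that $-E$ is relatively ample over $\fX_0$; then $E$ is vertical and $(-E)_s$ is ample on $\fX'_s$, the latter being projective over the affine scheme $\fX_{0,s}$. Let $\psi$ be the $\Q$-PL model function of $-E$ on $X'$, let $D'$ denote the pullback of $D$ to $\fX'$, and set $h_j:=h|_{X'}+\tfrac1j\,\psi$. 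Since $D'_s$ is nef and $(-E)_s$ is ample on $\fX'_s$, the class $D'_s+\tfrac1j\,(-E)_s$ is ample, hence semiample, on $\fX'_s$; as semiampleness is preserved under every further reduction, $L_{h_j}(y)$ is semiample in $\Pic(\red(X,y))\otimes\R$ for every $y\in X'=:U$, i.e.\ each $h_j$ is locally Fubini--Study on $U$. Finally $\psi$ is a fixed bounded function on the quasi-compact space $X'$, so $h_j\to h|_U$ uniformly, which completes the argument.

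The hard part, I expect, is the first step of the ``only if'' direction: promoting ``semipositive at $x$'' to ``semipositive on a neighbourhood'' and realizing the resulting nef residue by an honest nef divisor on the special fibre of a single formal model of that neighbourhood. For the first half one can invoke Theorem~\ref{semipositivity and curve restriction}, whose characterization of semipositivity by restriction to Zariski-closed curves makes the locality transparent; for the second half one combines the description of germs of $\R$-PL functions in terms of line bundles on special fibres with the existence of enough admissible formal blow-ups, and it is precisely here that the boundarylessness of $X$ is used, to keep these local formal models from being pathological. Everything after that is the formal-geometry incarnation of the classical fact that a nef class plus an ample class is ample.
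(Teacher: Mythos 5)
Your ``if'' direction is exactly the paper's argument: a locally Fubini--Study function is semipositive because semiample implies nef, uniform convergence is in particular pointwise, and Theorem~\ref{pointwise convergence of semipositive PL} applied to $h|_U$ gives semipositivity at $x$. For the ``only if'' direction the paper itself gives no argument: it simply cites \cite[Proposition~7.10]{gubler_rabinoff_jell:harmonic_trop}, so at that point you are attempting to reprove that external result rather than anything proved in this paper.

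Your sketch of the ``only if'' direction follows the expected Kleiman-type perturbation, and the setup is sensible: pass to a strictly affinoid neighbourhood with a nef formal model (this is \cite[Proposition~7.1]{gubler_rabinoff_jell:harmonic_trop}, quoted in Section~5, so no appeal to Theorem~\ref{semipositivity and curve restriction} is needed), dominate it by an admissible blow-up $\fX'$ of an affine model $\fX_0$, and add $\tfrac1j$ times the model function of $-E$. The genuine gap is the central step ``$D'_s$ nef and $(-E)_s$ ample, hence $D'_s+\tfrac1j(-E)_s$ ample, hence semiample.'' The special fibre $\fX'_s$ is only projective over the affine scheme $\fX_{0,s}$; it is not proper over $\td k$, and nefness in this paper means nonnegativity against curves that are proper over $\td k$. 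On a non-proper scheme ampleness is not detected by curves, so the classical ``nef plus ample is ample'' does not apply as stated. One must instead argue relatively: check ampleness on the fibres of $\fX'_s\to\fX_{0,s}$ over closed points (where curves are complete, so the fibrewise Kleiman argument works), then spread out using openness of the ample locus and the Jacobson property of the base. Even then, the openness statements are for line bundles, while $D'$ has genuinely real coefficients, and you must still verify that the resulting ``ample'' $\R$-class is semiample in the sense of $\Pic(\red(X,y))\otimes\R$ used to define locally Fubini--Study functions. None of this is supplied, and it is precisely the technical content of the cited Proposition~7.10. Relatedly, your argument never uses the boundarylessness hypothesis on $X$, and your closing guess about where it enters is unsubstantiated --- a sign that the sketch, as written, is not yet a complete proof of the regularization direction.
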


The only if direction is shown in \cite[Proposition 7.10]{gubler_rabinoff_jell:harmonic_trop} and the converse follows from Theorem \ref{intro-thm: pointwise convergence of semipositive PL}.

\subsection*{Affinoid Bieri--Groves theorem} Recall that the Bieri--Groves theorem is a crucial result in tropical geometry showing that for a closed $d$-dimensional subvariety $Y$ of the multipicative torus $\mathbb G_{\rm m}^n$, the associated tropical variety $\Trop(Y)$ is a finite union of $d$-dimensional $(\Z,\Gamma)$-polyhedra in $\R^n$ (see \artref{subsection: polyhedral geometry} for the conventions in polyhedral geometry), where $\Gamma=v(k^\times)$ is the value group of the valuation $v=-\log|\phantom{a}|$ of $k$. For a quasicompact strictly analytic space $X$ of pure dimension $d$ and a morphism $\varphi \colon X \to \mathbb G_{\rm m}^{n,\an}$, we define a \defi{smooth tropicalization map} $\varphi_{\rm trop}\colon X \to \R^n$ by $\trop\circ \varphi$ and the associated tropical variety by $\varphi_{\rm trop}(X)$. 
By results of Berkovich and Ducros (see \cite{ducros03:image_reciproque}),   this tropical variety is analogously a finite union of $(\Z,\Gamma)$-polytopes of dimension $\leq d$, but the inequality might be strict. Ducros \cite[Th\'eor\`eme 3.4]{ducros12:squelettes_modeles} has shown that $\trop(\partial X)$ is of  dimension $<d$. In the affinoid case, the results of Ducros yield the following more precise fact which we will show in Theorem \ref{affinoid Bieri-Groves theorem}.

\begin{thm} \label{intro-thm: affinoid Bieri-Groves theorem}
Let $X$ be a strictly affinoid space over $k$ of pure dimension $d$ with relative boundary $\partial X$ and let $\varphi \colon X \to \mathbb G_{\rm m}^{n,\an}$ be a morphism. Then the associated tropical variety $\varphi_{\rm trop}(X)$ has pure dimension $d$ at every point of $\varphi_{\rm trop}(X) \setminus \varphi_{\rm trop}(\partial X)$.
\end{thm}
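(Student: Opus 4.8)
The strategy is to deduce the theorem from the two facts of Ducros recalled before the statement — that $\varphi_{\rm trop}(X)$ is a finite union of $(\Z,\Gamma)$-polytopes of dimension $\le d$, and that $\dim\varphi_{\rm trop}(\partial X)<d$ — by first passing to a purely local assertion and then analysing the tropicalization on a formal model, where the affine lifting theorem (Theorem~\ref{intro-thm: affine lifting}) and the classical Bieri--Groves theorem over the residue field $\td k$ become available. Since $\varphi_{\rm trop}(X)$ has dimension $\le d$ at every point, it suffices to prove the implication
\begin{equation}\tag{$\ast$}
\omega\in\varphi_{\rm trop}(X)\ \text{and}\ \dim_\omega\varphi_{\rm trop}(X)<d\ \Longrightarrow\ \omega\in\varphi_{\rm trop}(\partial X).
\end{equation}
Indeed, if $\varphi_{\rm trop}(X)$ failed to be of pure dimension $d$ at some $\omega_0\notin\varphi_{\rm trop}(\partial X)$, there would be a maximal polyhedron $\sigma$ of $\varphi_{\rm trop}(X)$ with $\omega_0\in\sigma$ and $\dim\sigma<d$; for every $\omega$ in the relative interior of $\sigma$ one has $\dim_\omega\varphi_{\rm trop}(X)=\dim\sigma<d$, so $(\ast)$ would place the relative interior of $\sigma$, and hence — as $\varphi_{\rm trop}(\partial X)$ is compact, so closed — also $\sigma$ itself, inside $\varphi_{\rm trop}(\partial X)$, contradicting $\omega_0\notin\varphi_{\rm trop}(\partial X)$.

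\emph{The case of an isolated value, as a template.} If $\dim_\omega\varphi_{\rm trop}(X)=0$, then $\omega$ is isolated in $\varphi_{\rm trop}(X)$, so $\varphi_{\rm trop}^{-1}(\omega)$ is open and closed in $X$, hence a union of connected components $X_0$; on each $X_0$ all $|g_i|$ are constant. Now $X_0$ is a strictly $k$-affinoid space of dimension $d\ge 1$, hence not proper over $k$, so $\partial(X_0/k)\neq\varnothing$; and since $X_0\hookrightarrow X$ is at once an open and a closed immersion, $\partial(X_0/k)=X_0\cap\partial X$. Any point of $\partial(X_0/k)$ therefore lies in $\partial X$ and maps to $\omega$, which yields $(\ast)$ in this case. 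The general case imitates this: the directions in which $\varphi$ is ``constant'' are forced to produce relative boundary of $X$, because the affinoid $X$ contains no positive-dimensional analytic subspace proper over $k$.

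\emph{The general case.} To prove $(\ast)$ one may, after replacing $k$ by a complete extension with value group $\R$ (which affects neither $\dim_\omega\varphi_{\rm trop}(X)$ nor $\varphi_{\rm trop}(\partial X)$) and rescaling the coordinates of $\varphi$, assume $\omega=0$ and $g_1,\dots,g_n\in\mathcal O(X)^\times$ with $\|g_i\|_{\sup}\le 1$. Choose an affine formal model $\fX$ of $X$, adapted to $0$, with $g_1,\dots,g_n$ in its structure sheaf, giving a reduction $\bar g=(\bar g_1,\dots,\bar g_n)\colon\fX_s\to\mathbb A^n_{\td k}$; set $\fX_s^{\circ}:=\fX_s\setminus\bigcup_i\{\bar g_i=0\}$, an affine open subvariety of $\fX_s$ with $\red^{-1}(\fX_s^{\circ})=\varphi_{\rm trop}^{-1}(0)$, non-empty because $0\in\varphi_{\rm trop}(X)$, and of pure dimension $d$. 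The decisive structural input — which is exactly where Ducros's results on tropicalizations of formal models enter — is that the germ of $\varphi_{\rm trop}(X)$ at $0$ agrees with the tropical variety over $\td k$ of the algebraic set $\overline{\bar g(\fX_s^{\circ})}\subseteq\mathbb G_{{\rm m},\td k}^n$; by the algebraic Bieri--Groves theorem this gives $\dim_0\varphi_{\rm trop}(X)=\dim\overline{\bar g(\fX_s^{\circ})}$. Hence the hypothesis $\dim_0\varphi_{\rm trop}(X)<d$ forces $\bar g|_{\fX_s^{\circ}}$ to contract a positive-dimensional subvariety; picking an irreducible curve $Z$ inside a fibre of $\bar g$ over $\fX_s^{\circ}$, $Z$ is an irreducible closed curve in $\fX_s$ contained in $\fX_s^{\circ}$ on which each $\bar g_i$ is a non-zero constant. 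Applying the affine lifting theorem to $Z$ produces an irreducible Zariski-closed curve $Y\subseteq X$ having $Z$ as an irreducible component of the special fibre of its formal closure in $\fX$; then $Y$ is a strictly $k$-affinoid curve, so $\partial(Y/k)\neq\varnothing$, and $\partial(Y/k)=Y\cap\partial X\subseteq\partial X$ since $Y\hookrightarrow X$ is a closed immersion. One then checks, using Ducros's Th\'eor\`eme~3.4 applied to $Y$ (which bounds $\dim\varphi_{\rm trop}(\partial(Y/k))$) together with an analysis of how $\partial(Y/k)$ reduces into the special fibre of $Y$, that some point $y\in\partial(Y/k)$ reduces into the $\varphi$-constant curve $Z$; such a $y$ satisfies $\varphi_{\rm trop}(y)=0$ and lies in $\partial X$, so $0\in\varphi_{\rm trop}(\partial X)$, which is $(\ast)$.

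The main obstacle is the identity $\dim_0\varphi_{\rm trop}(X)=\dim\overline{\bar g(\fX_s^{\circ})}$ invoked above: it expresses that tropicalization loses no dimension in passing from the generic to the special fibre of a formal model, and it is here — rather than in the polyhedral bookkeeping, the affine lifting theorem, or the elementary behaviour of relative boundaries of curves — that Ducros's theory (Th\'eor\`eme~3.4 of \cite{ducros12:squelettes_modeles} and the surrounding study of skeletons of polystable models) is indispensable. A secondary point requiring care is the final boundary-finding step, where one must ensure that the lifted curve $Z$ actually ``reaches'' the relative boundary of $X$ rather than staying in its relative interior; this too is ultimately controlled by Ducros's description of the boundary in terms of formal models.
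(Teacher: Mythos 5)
Your argument breaks at the normalization step, and the ``decisive structural input'' it is built on is not a true statement. Suppose $d\ge 1$ (for $d=0$ the theorem is trivial). If, after rescaling, $g_1,\dots,g_n$ lie in the structure sheaf of an admissible affine formal model $\fX$ of $X$ (equivalently $|g_i|\le 1$ on $X$) and $\omega=0\in\varphi_{\rm trop}(X)$, then $\fX_s^\circ=\fX_s\setminus\bigcup_i\{\bar g_i=0\}$ is a nonempty open subset of $\fX_s$, which is pure of dimension $d$ (cf.\ the reference to \cite[Proposition~2.7(3)]{gubler_rabinoff_werner:tropical_skeletons} after Theorem~\ref{thm:affine lifting}); hence $\fX_s^\circ$ contains the generic point $\zeta$ of an irreducible component. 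The reduction map is surjective, so some $x\in X$ has $\red(x)=\zeta$; then $x\in\red^{-1}(\fX_s^\circ)=\varphi_{\rm trop}^{-1}(0)$, and since the closure of $\zeta$ is a $d$-dimensional affine, hence non-proper, $\td k$-variety, $x\in\partial X$ by \ref{interior and proper closure}. Thus $0\in\varphi_{\rm trop}(\partial X)$, contradicting the standing assumption $\omega\notin\varphi_{\rm trop}(\partial X)$: the situation in which your ``general case'' argument is carried out cannot occur (tellingly, in your endgame the lifted curve is superfluous — any point over $\zeta$ already gives a boundary point in the fibre, which is exactly what destroys the setup). Moreover, the identity $\dim_0\varphi_{\rm trop}(X)=\dim\overline{\bar g(\fX_s^\circ)}$ is false as a general statement: for $X=\sM(k\langle t,t^{-1}\rangle)$ and $\varphi=t$ the left-hand side is $0$ while the right-hand side is $1$. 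Ducros's Th\'eor\`eme~3.4 gives only $\dim\varphi_{\rm trop}(X,x)\le d_\varphi(X,x)$, with equality guaranteed at \emph{interior} points; at boundary points the inequality can be strict, and the points of the tube that ``see'' the special-fibre dimension (those lying over generic points of components) are precisely boundary points. Bridging that gap is the whole content of the theorem; your proposal assumes it away, and the curve-lifting and final boundary-finding steps (``one then checks\dots'') do not repair it.

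For comparison, the paper's proof uses neither the affine lifting theorem nor any special-fibre dimension count. After making $\omega$ $\Gamma$-rational by base change, one composes $\varphi$ with a generic homomorphism $q\colon\mathbb G_{\rm m}^n\to\mathbb G_{\rm m}^d$ chosen so that $Q_\R$ is injective on the faces of $\varphi_{\rm trop}(X)$ and $\omega'=Q_\R(\omega)\notin\psi_{\rm trop}(\partial X)$ for $\psi=q^\an\circ\varphi$. Then $X_{\omega'}=\psi_{\rm trop}^{-1}(\omega')$ is contained in $\Int(X)$, so $X_{\omega'}\to U_{\omega'}=\trop^{-1}(\omega')$ has empty relative boundary and is finite; a Shilov point $x$ of $X_\omega=\varphi_{\rm trop}^{-1}(\omega)$ then reduces onto the generic point of the canonical reduction of $U_{\omega'}$, which forces $d_\psi(X,x)=d$, i.e.\ $x\in S_\psi(X)\subset S_\varphi(X)$ (see \ref{tropical skeleton}). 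Since the fibre over $\omega$ avoids $\partial X$, the point $x$ is interior, and Theorem~\ref{Ducros results about tropical germs} gives $\dim\varphi_{\rm trop}(X,x)=d$, whence the local pure-dimensionality at $\omega$. The missing idea in your approach is exactly such a mechanism for producing an \emph{interior} point of the fibre with $d_\varphi(X,x)=d$; no model with all $g_i$ power-bounded can provide it, because for such a model the relevant points of the tube are automatically boundary points.
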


\subsection*{Maximum principle}
We will study the maximum principle in the class of piecewise smooth functions. Let $X$ be a separated good strictly $k$-analytic space. A function $u\colon X \to \R$ is called \defi{piecewise smooth} if there is a $\G$-covering of $X$ by compact strictly $k$-analytic domains $(V_i)_{i \in I}$ such that for every $i \in I$, there is a smooth tropicalization map $\varphi_{i,\rm trop}\colon V_i \to \R^n$ as introduced above and a smooth function $f_i\colon \R^n \to \R$ such that $u|_{V_i}=f_i \circ \varphi_{i,\rm trop}$ on $V_i$. We note that $\R$-PL functions are the special case of piecewise smooth functions where all the $f_i$'s can be chosen as affine functions. 

In the theory of Chambert-Loir and Ducros~\cite[\S 5.4]{chambert_ducros12:forms_courants}, we can associate to every continuous real function $u$ on $X$ an associated current $[u]$. The function $h$ is called \defi{plurisubharmonic} (or just \defi{psh}) \defi{in the sense of Chambert-Loir and Ducros} if the associated current $\d'\d''[h]$ is positive.  See \cite[\S 5.5]{chambert_ducros12:forms_courants} for details. Then we can show the following \emph{maximum principle.}

\begin{thm}\label{intro-thm: maximum.principle}
  Let $X$ be a good, separated, strictly $k$-analytic space, and let $u$ be a  piecewise smooth function on $X$ which is psh in the sense of Chambert-Loir and Ducros.  If $u$ has a local maximum on $x\in X \setminus \partial X$, then $u$ is locally constant at~$x$.
\end{thm}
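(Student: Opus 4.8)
The plan is to reduce the maximum principle on $X$ to the one-dimensional case on smooth analytic curves, where it is classical (Thuillier's thesis), and then to leverage the piecewise smooth structure to transport this back. First I would work locally near the point $x$ where $u$ attains a local maximum; since $x \notin \partial X$, after shrinking $X$ I may assume $X$ is boundaryless and that on a $\G$-covering $(V_i)$ we have $u|_{V_i} = f_i \circ \varphi_{i,\mathrm{trop}}$ for smooth $f_i$. The key geometric input is the affinoid Bieri--Groves theorem (Theorem~\ref{intro-thm: affinoid Bieri-Groves theorem}): it tells us that away from the tropicalized boundary — which, near $x$, is empty — the tropical variety $\varphi_{i,\mathrm{trop}}(V_i)$ has pure dimension $d = \dim_x X$. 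This is what lets us find, through any point of $X$ near $x$ and in any tropical direction, a curve inside $X$ whose tropicalization is a genuine $1$-dimensional polyhedron, so that $u$ restricted to that curve is non-constant in a controlled way precisely when $u$ is non-constant near $x$.

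The central step is then: suppose for contradiction that $u$ is not locally constant at $x$. Using that $u = f \circ \varphi_{\mathrm{trop}}$ locally with $f$ smooth and $\varphi_{\mathrm{trop}}(X)$ of pure dimension $d$ near $\varphi_{\mathrm{trop}}(x)$ (with $x$ interior), I would produce a point $y$ arbitrarily close to $x$ with $u(y) = u(x)$ lying on the relative interior of a $d$-dimensional face of the tropicalization, and choose a tropical line segment through $\varphi_{\mathrm{trop}}(y)$ along which $f$ is strictly concave or has a strict interior maximum. By the lifting results — more precisely, by slicing $X$ with suitable analytic functions so as to cut down to a curve, as in the reduction used for Corollary~\ref{restriction to smooth curves} — I obtain a Zariski-closed curve $C$ through $y$, contained in a small neighbourhood $U \subseteq X \setminus \partial X$ of $x$, whose normalization $C'$ is a smooth strictly $k$-analytic curve, such that $\varphi_{\mathrm{trop}}|_C$ still surjects onto (an open piece of) that tropical segment. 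The restriction $u|_{C'}$ is then piecewise smooth on $C'$, it is psh in the sense of Chambert-Loir--Ducros (psh is preserved under pullback to Zariski-closed subspaces and under normalization, since these are finite/closed immersions and $\d'\d''$ pulls back positive currents to positive currents), and it has a local maximum at the preimage of $y$, which lies in $C' \setminus \partial C'$.

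On a smooth strictly $k$-analytic curve, a piecewise smooth psh function that attains an interior local maximum is locally constant — this follows from Thuillier's theory of subharmonic functions on curves (harmonicity/subharmonicity is detected by the slopes at tangent directions, and the maximum principle for subharmonic functions is standard in that setting), combined with the fact that on curves the Chambert-Loir--Ducros notion of psh agrees with Thuillier's notion of subharmonic. Hence $u|_{C'}$ is constant near the preimage of $y$; pushing forward, $u$ is constant along the image curve $C$, contradicting the strict behaviour of $f$ along the chosen tropical segment. This contradiction shows $u$ is locally constant at $x$.

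The main obstacle I expect is the geometric reduction to a curve with the right tropical behaviour: one must ensure that the curve $C$ cut out inside $X$ near $x$ both stays in the boundaryless locus and has tropicalization genuinely hitting a non-constant direction of $f$ — it is here that the pure-dimensionality statement of the affinoid Bieri--Groves theorem at interior points is essential, since without it the sliced tropical variety could collapse to lower dimension and $u|_C$ could be constant for trivial reasons unrelated to $f$. A secondary technical point, which I would treat carefully but not expect to be deep, is the compatibility of the Chambert-Loir--Ducros positivity of $\d'\d''[u]$ with restriction to $C$ and with the normalization $C' \to C$, i.e.\ that positivity of currents is stable under these pullbacks; this is part of the formalism of \cite{chambert_ducros12:forms_courants} and of the agreement of psh with subharmonic on curves.
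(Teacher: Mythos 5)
Your proposal takes a genuinely different route from the paper, and unfortunately it has a fatal gap. The paper does \emph{not} reduce to curves and Thuillier's theory; it proves a purely tropical maximum principle (Proposition~\artref{tropical maximum principle}) for weakly tropically psh functions on tropical cycles, and then pushes the problem \emph{forward} to a tropicalization $h'(X)$: it arranges (using the boundary-avoiding functions of \cite[Lemme~3.4.3]{chambert_ducros12:forms_courants} and then the affinoid Bieri--Groves theorem) that $h'(x)\notin h'(\partial X)$, shows via the direct-image identity $\langle\eta,\d'\d''[F]\rangle=\langle(h')^*\eta,\d'\d''[h]\rangle\geq 0$ that the tropical avatar $F$ is weakly tropically psh, and concludes from the tropical statement. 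No curve, no Thuillier.

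The gap in your argument is the sentence asserting that $u|_{C'}$ ``is psh in the sense of Chambert-Loir--Ducros (psh is preserved under pullback to Zariski-closed subspaces and under normalization, since these are finite/closed immersions and $\d'\d''$ pulls back positive currents to positive currents).'' That is not a known fact. The paper flags it explicitly in the remark immediately after the theorem: ``We do not know whether an $\R$-PL function that is psh in the sense of Chambert-Loir and Ducros is necessarily semipositive. This is strongly related to the question of whether psh functions are stable under pullback, which is likewise unknown.'' In the Chambert-Loir--Ducros formalism, $\d'\d''[u]$ is a \emph{current}, i.e.\ a functional on compactly supported smooth forms; currents do not pull back along arbitrary morphisms, and positivity of $\d'\d''[u]$ on $X$ does not formally yield positivity of $\d'\d''[u|_C]$ on a Zariski-closed curve $C$. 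The curve-restriction machinery in Theorem~\artref{semipositivity and curve restriction} and Corollary~\artref{restriction to smooth curves} works for \emph{semipositive} $\R$-PL functions precisely because semipositivity is known to be stable under pullback (\cite[Proposition~7.6]{gubler_rabinoff_jell:harmonic_trop}); the maximum principle, however, is claimed for the wider class of piecewise smooth CLD-psh functions, where this tool is unavailable. Choosing the forward/tropical route rather than the backward/curve route is precisely how the paper sidesteps this obstruction, so the use of the affinoid Bieri--Groves theorem that you correctly identified as essential is deployed to control $h'(X)$ near $h'(x)$, not to manufacture curves in $X$.

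A secondary weakness, less fatal but worth noting: even granting pullback stability, your contradiction step relies on producing a curve $C$ whose tropicalization ``genuinely hits a non-constant direction of $f$,'' and you acknowledge this as the main technical obstacle without giving a construction. In the paper this difficulty simply does not arise, because the tropical maximum principle handles the entire fan of $h'(X)$ at $\omega$ at once by induction on dimension (slicing by generic hyperplanes via tropical intersection theory), rather than isolating one direction at a time.
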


Since any semipositive $\R$-PL function is psh in the sense of Chambert-Loir and Ducros \cite[Theorem 7.14]{gubler_rabinoff_jell:harmonic_trop}, the maximum principle holds for semipositive $\R$-PL functions. We prove Theorem~\ref{intro-thm: maximum.principle} by proving a tropical analogue in Theorem \ref{tropical maximum principle}: namely, if $C\subset\R^n$ is a tropical variety of pure dimension $d$ and if $f\colon C\to\R$ is \defi{weakly tropically psh}, in the sense that the Lagerberg current $\d'\d''[f]$ is positive, then $f$ satisfies a similar maximum principle.  The proof uses the Bieri--Groves theorem for affinoid varieties given in Theorem \ref{intro-thm: affinoid Bieri-Groves theorem}. Then Theorem \ref{intro-thm: maximum.principle} is shown in Theorem \ref{thm: maximum principle for semipositive functions}.

\begin{rem}
  We do not know whether an $\R$-PL function that is psh in the sense of Chambert-Loir and Ducros is necessarily semipositive.  This is strongly related to the question of whether psh functions are stable under pullback, which is likewise unknown.
\end{rem}

\subsection*{Acknowledgments}

We are very grateful to Antoine Ducros who helped us with the argument for the local lifting theorem. We thank Mattias Jonsson for comments about an earlier draft of the paper.

\section{Preliminaries} \label{section: preliminaries}

\subsection{General Notation and Conventions} \label{general notation and conventions}
Throughout this paper, $k$ will denote a field that is complete with respect to a nontrivial, non-Archimedean valuation.

The set of natural numbers $\N$ includes $0$. We allow equality in a set-theoretic inclusion $S \subset T$.  
If $P$ is any abelian group, we let $P_\Lambda \coloneqq P \otimes_\Z \Lambda$ be the tensor product with $\Lambda$. For a field $F$, we denote by $\overline F$ an algebraic closure of $F$.

For a ring $A$, the group of invertible elements is denoted by $A^\times$.
A \defi{variety} over a field $F$ is an integral scheme which is of finite type and separated over $\Spec\, F$.

A topological space is called \defi{compact} if it is quasi-compact and Hausdorff. 

\subsection{Non-Archimedean geometry}  \label{Non-Archimedean geometry}

A \defi{non-Archimedean field} is a field $k$ endowed with a  non-Archimedean complete absolute value $|\phantom{a}|\colon k\to \R$. The corresponding valuation ring is denoted by $k^\circ$, its unique maximal ideal by $k^{\circ\circ}$ and its residue field by $\td k=k^\circ/k^{\circ\circ}$.
An \defi{analytic extension field} is a non-Archimedean field $k'$ equipped with an isometric embedding $k\inject k'$.

We will use only good Berkovich analytic spaces over $k$ in the terminology from~\cite{berkovic93:etale_cohomology}. These are the $k$-analytic spaces considered in~\cite{berkovic90:analytic_geometry}. Here, \defi{good} means that every  point of the analytic space $X$ has a neighbourhood isomorphic to the Berkovich spectrum $\sM(\sA)$ of an affinoid algebra $\sA$. We denote by $\sO_X$ the sheaf of analytic functions on a good analytic space $X$ and by 	$\sH(x)$ the completion of the residue field  $\sO_{X,x}/\fm_{X,x}$. In this paper, we are mainly concerned with good \defi{strictly analytic} spaces, which means that we can choose $\sA$ above isomorphic to a quotient of a Tate algebra $k\angles{x_1,\ldots,x_n}$. Here, we recall that 
$$k\angles{x_1,\ldots,x_n} \coloneqq \left\{\sum_m a_m x^m \in k\ps{x_1,\dots,x_n}\mid \lim_{|m| \to \infty} |a_m|=0\right\}$$
endowed with its Gauss norm. Similarly, we define the subring $k^\circ\angles{x_1,\ldots,x_n}$ assuming all $a_m \in k^\circ$ in the above definition. For a (strictly) affinoid algebra $\sA$, the subring of power bounded elements is denoted by $\sA^\circ$. The Berkovich spectrum of the Tate algebra $k\angles{x_1,\ldots,x_n}$ is the \defi{closed unit ball of dimension $n$}  denoted by $\bB^n$.

For a good strictly $k$-analytic space $X$ over $k$, we also consider  the  $\G$-topology  generated by the strictly affinoid domains. This is most natural for comparison to rigid analytic geometry: see~\cite[1.6]{berkovic93:etale_cohomology}. 

An important tool is the \defi{relative boundary} of a morphism $\varphi\colon X \to Y$ of good $k$-analytic spaces. This is a closed subset of $X$ which we denote by $\partial(X/Y)$.  See \cite[\S 2.5 and \S 3.1]{berkovic90:analytic_geometry} for the definition and properties. The complement $\Int(X/Y)\coloneqq X \setminus \partial(X/Y)$ is called the \defi{relative interior}. Most often we will apply this in the case $Y=\sM(k)$, in which case we just set $\partial X \coloneqq \partial(X/\sM(k))$. If $\partial X = \emptyset$, then we call $X$ \emph{boundaryless}.

For a scheme $Y$ of finite type over $k$, we denote by $Y^\an$ the \emph{Berkovich analytification} of $Y$. This is a (good) boundaryless strictly $k$-analytic space.  See \cite[\S 3.5]{berkovic90:analytic_geometry} for details.

\subsection{Models and formal geometry} \label{subsection: models and formal geometry}

Recall that a quasi-compact \emph{admissible} formal scheme  over $k^\circ$ is a quasi-compact  formal scheme $\fX$ over $k^\circ$ such that $\fX$ has an open covering by affine formal schemes $\Spf(A)$ with $A$ a  $k^\circ$-algebra which is topologically of finite type and flat over $k^\circ$. The \emph{generic fiber} of $\fX$ is defined on affines by $\Spf(A)_\eta \coloneqq \sM(A \otimes_{k^\circ} k)$, and the \defi{special fibre} is defined on affines by $\Spf(A)_s \coloneqq \Spec(A \otimes_{k^\circ} \td k)$. There is a canonical \defi{reduction map} $\red\colon \fX_\eta \to \fX_s$ which is surjective, anti-continuous, and functorial \cite[Proposition 2.17]{gubler_rabinoff_werner:tropical_skeletons}. 

\begin{defn} \label{formal models}
  Let $X$ be a compact strictly $k$-analytic space.  A \defi{formal $k^\circ$-model} of $X$ is a quasi-compact admissible formal scheme $\fX$ over $k^\circ$ with an identification $\fX_\eta=X$. For details, we refer to \cite{bosch14:lectures_formal_rigid_geometry}.
	
	If $L$ is a line bundle on $X$, then a {formal $k^\circ$-model} $(\fX,\fL)$ of $(X,L)$ is a formal $k^\circ$-model $\fX$ of $X$ and a line bundle $\fL$ on $\fX$ with an identification $\fL|_X= L$.  
\end{defn}

A model $(\fX,\fL)$ of $(X,\sO_X)$ gives rise to a metric $\metr_\fL$ on $\sO_X$ by requiring that for any formal open subset $\fU$ of $\fX$ and for any frame $s$ of $\fL$ over $\fU$, we have $\|s\|_\fL=1$ on $\fU_\eta$. Then we have an associated (continuous) function $h_\fL \coloneqq -\log\|1\|_ \fL$ on $X$.

Let $\fX$ be a formal $k^\circ$-model of $X$. The isomorphism classes of line bundles $\fL$ on $\fX$ equipped with an identification $\fL|_X=\sO_X$ form an abelian group denoted by $M(\fX)$. We call the elements of the group $M(\fX)_\Lambda \coloneqq M(\fX)\otimes_\Z \Lambda$ \defi{models of $\sO_X$ with coefficients in $\Lambda$}. For $\fL \in M(\fX)$, we define the metric $\metr_\fL$ and the function $\fL$ by using the above and $\Lambda$-linearity. 

For generalization of these notions to a paracompact good strictly $k$-analytic space $X$, we refer to \cite[\S 2.4, 5.4, 5.6]{gubler_rabinoff_jell:harmonic_trop}.

\subsection{PL functions} \label{subsection: PL functions}

Let $k$ be a {non-trivially valued}  non-Archimedean field. 
We fix an additive subgroup $\Lambda$ of $\R$ which is either divisible or equal to $\Z$. We consider a good, strictly $k$-analytic space $X$. 
A function $h\colon X \to \R$ is called \emph{$\Lambda$-piecewise linear} or \emph{$\Lambda$-PL} if $\G$-locally we have $h=\sum_j \lambda_j \log |f_j|$ for finitely many $\lambda_j \in \Lambda$ and invertible analytic functions $f_j$.  Equivalently, every $x \in X$ has a compact strictly analytic neighborhood $U$, a formal model $\fU$ of $U$, and a formal model $\fL$ of $\sO_U$ on $\fU$ with coefficients in $\Lambda$, such that $h|_U = h_\fL \coloneq -\log\|\scdot\|_\fL$.  See \cite[Proposition~5.8]{gubler_rabinoff_jell:harmonic_trop}.

\subsection{Polyhedral geometry} \label{subsection: polyhedral geometry}

Let $R$ be a subring of $\R$ an let  $\Gamma$ be an $R$-submodule  of $\R$. We say that a function $f \colon \R^n \to \R$ is \defi{$(R,\Gamma)$-linear} if $f$ is an affine function of the form
$$f(x)=m_1 x_1 + \dots m_n x_n + c$$
with $m_1,\dots,m_n \in R$ and $c \in \Gamma$. A \defi{polyhedron} $\Delta$ is a finite intersection of half-spaces $H_i \coloneqq \{x \in \R^n \mid f_i(x) \geq 0\}$ for affine functions $f_i$. If  all $f_i$ can be chosen to be $(R,\Gamma)$-linear, we call $\Delta$ an \defi{$(R,\Gamma)$-polyhedron}.  A \defi{face} of $\Delta$ is the intersection of $\Delta$ with the boundary of a half-space $H \supset \Delta$. We allow also $\emptyset$ and $\Delta$ as faces of $\Delta$. 
A bounded polyhedron is called a \defi{polytope}. 

A \defi{polyhedral complex} is a set $\mathcal C$ of polyhedra in $\R^n$ satisfying the following axioms:
\begin{enumerate}
	\item If $\Delta \in \mathcal C$, then all faces of $\Delta$ are in $\mathcal C$.
	\item If $\Delta, \Delta' \in \mathcal C$, then $\Delta \cap \Delta'$ is a face of $\Delta$ and of $\Delta'$.
\end{enumerate}

\section{The affine lifting theorem}  \label{section:affine lifting theorem}
	
We fix a field $k$ equipped with a nontrivial, non-Archimedean valuation $\val\colon k\to\R\cup\{\infty\}$ and associated absolute value $|\scdot| = \exp(-\val(\scdot))$.

Let $\fX = \Spf(A)$ be an affine admissible formal scheme over $k^\circ$ with generic fiber $X = \sM(\sA)$, where $\sA = A\tensor_{k^\circ} k$.   Set $n=\dim(X)$, and write $A_s = A\tensor_{k^\circ}\td k$ and $\fX_s = \Spec(A_s)$ for the special fiber of $\fX$.

Let $Y\subset X$ be a Zariski-closed subspace, defined by an ideal $I\subset\sA$.  The \defi{closure} of $Y$ in $\fX$ is the Zariski-closed formal subscheme $(\bar Y)^\fX = \Spf(A/(A\cap I))\subset\fX$.  This is $k^{\circ\circ}$-torsionfree, so by~\cite[Proposition~1.1(c)]{BL1}, it is again an admissible formal scheme over $k^\circ$.  Note that the analytic generic fiber of $(\bar Y)^\fX$ is $Y$. We refer to \cite[Proposition 3.3]{gubler98:local_heights_subvariet} for details.

\begin{thm}[Affine Lifting Theorem]\label{thm:affine lifting}
  Let $\fX$ be an affine admissible formal scheme over $k^\circ$ with generic fiber $X$.  Let $W\subset\fX_s$ be an irreducible Zariski-closed subset of dimension $d$.  Then there is a pure-dimensional Zariski-closed subspace $Y\subset X$ of dimension $d$ such that $W$ is an irreducible component of the special fiber of $(\bar Y)^\fX$. 
\end{thm}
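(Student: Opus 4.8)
The plan is to argue by induction on $n\coloneqq\dim X$, peeling off one unit of codimension at a time by intersecting with a well-chosen hypersurface until $W$ becomes top-dimensional in the special fibre. The one geometric input I will need is that the special fibre of an \emph{irreducible} admissible formal scheme over $k^\circ$ is equidimensional, of dimension one less: over a discrete valuation ring this is Krull's principal ideal theorem together with the fact that topologically finitely generated flat $k^\circ$-algebras which are domains are Noetherian, catenary and biequidimensional, and the general case reduces to this (or uses the dimension theory of Bosch--L\"utkebohmert).

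Write $\fX=\Spf(A)$, let $\mathfrak p\subset A_s$ be the prime defining $W$ and $P\subset A$ its preimage, a prime containing $k^{\circ\circ}A$. Since $A$ has no $k^{\circ\circ}$-torsion, nonzero elements of $k^{\circ\circ}$ are nonzerodivisors on $A$ and so lie in no minimal prime; I choose a minimal prime $\mathfrak P$ of $A$ with $\mathfrak P\subset P$ and with $\dim(A/\mathfrak P)$ maximal among these. Then $A'\coloneqq A/\mathfrak P$ is a domain, topologically of finite type and flat over $k^\circ$, so $\fX'\coloneqq\Spf(A')$ is an irreducible admissible formal scheme, $(\fX')_s$ is equidimensional of dimension $n'\coloneqq\dim\bigl((\fX')_\eta\bigr)$, and it contains $W$, now cut out by the image $\mathfrak p'\subset (A')_s$ of $P$. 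Here $(\fX')_\eta$ is a component of $X$, so $n'\le n$; moreover $d=\dim W\le n'$, and the maximality in the choice of $\mathfrak P$ shows $d=n'$ precisely when $W$ is an irreducible component of $\fX_s$. In that case I take $Y\coloneqq(\fX')_\eta$: it is irreducible of dimension $d$, its formal closure in $\fX$ is $\fX'$, and it meets $(\fX')_s$ in a top-dimensional --- hence, by equidimensionality, a component --- piece equal to $W$. This also settles the base case, since $d=n$ forces this alternative.

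So suppose $d<n'$. Put $\sA'\coloneqq A'\otimes_{k^\circ}k$, let $P'\subset A'$ be the preimage of $\mathfrak p'$, and form the localization $\sA'_{P'}\coloneqq(A'\setminus P')^{-1}\sA'$. A direct computation shows that $\dim\sA'_{P'}$ equals the codimension of $W$ in $(\fX')_s$, which is at least $1$; hence $\sA'_{P'}$ is a domain but not a field, so it has a nonzero maximal ideal $\mathfrak Q$. Contracting, $\mathfrak Q\cap A'$ is nonzero, and since no nonzero element of $k^{\circ\circ}$ lies in $\mathfrak Q$ and $A'$ is $k^{\circ\circ}$-adically separated, $\mathfrak Q\cap A'$ is not contained in $k^{\circ\circ}A'$; I pick $g\in(\mathfrak Q\cap A')\setminus k^{\circ\circ}A'$. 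Then $g\in P'$ has nonzero reduction $\bar g\in\mathfrak p'$, and $g$ is a nonzero non-unit of the domain $\sA'$, so $Z\coloneqq V(g)\subset X'\coloneqq(\fX')_\eta$ is Zariski-closed and pure of dimension $n'-1<n$. Because $g$ lies in the proper ideal $\mathfrak Q$ of $\sA'_{P'}$, no element of $A'\setminus P'$ is divisible by $g$ in $\sA'$, i.e. $A'\cap g\sA'\subset P'$; as the formal closure of $Z$ in $\fX'$ is $\Spf\bigl(A'/(A'\cap g\sA')\bigr)$, this is exactly the statement that $\eta_W$ lies in its special fibre, so --- using surjectivity of the reduction map --- $Z$ meets $\red^{-1}(\eta_W)$ and $W\subset(\bar Z)^{\fX'}_s$.

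Now $(\bar Z)^{\fX'}$ is an admissible formal scheme whose generic fibre $Z$ has dimension $n'-1<n$ and whose special fibre contains the irreducible closed subset $W$ of dimension $d\le n'-1$, so the induction hypothesis yields a pure-dimensional Zariski-closed $Y\subset Z$ of dimension $d$ with $W$ an irreducible component of $(\bar Y)^{(\bar Z)^{\fX'}}_s$. Since $(\bar Z)^{\fX'}\subset\fX'\subset\fX$ are closed formal subschemes, taking closures is transitive, whence $(\bar Y)^{(\bar Z)^{\fX'}}=(\bar Y)^{\fX}$, and $Y\subset Z\subset X$ is the required subspace. The step I expect to be the main obstacle is the dimension bookkeeping underpinning this argument: establishing equidimensionality of the special fibre of an irreducible admissible formal scheme (and coping with the non-Noetherianity of $k^\circ\langle x_1,\dots,x_n\rangle$ when $k$ is not discretely valued), and pinning down $\dim\sA'_{P'}$ so as to know that $\sA'_{P'}$ is a field precisely when $W$ is already a component of $\fX_s$. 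The remaining manipulations with formal models, their closures, and the reduction map are routine.
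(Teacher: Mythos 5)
Your inductive skeleton (reduce to $A'$ a domain, then at each step find a proper Zariski-closed subspace of the generic fibre whose formal closure still has $W$ in its special fibre, and iterate) is the same as the paper's, and several of your manipulations are sound: the containment $A'\cap g\sA'\subset P'$ does follow from $g\in\mathfrak Q$, transitivity of formal closures is fine, purity of $V(g)$ in the affinoid domain $\sA'$ is standard, and the pure-dimensionality of special fibres that you worry about is already available in the literature --- it is exactly what the paper invokes from \cite[Proposition~2.7(3)]{gubler_rabinoff_werner:tropical_skeletons}. The genuine gap is the assertion you label a ``direct computation'': that $\dim\sA'_{P'}$ equals the codimension of $W$ in $\fX'_s$, and in particular that $\sA'_{P'}$ is not a field when $d<n'$. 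This is not dimension bookkeeping; it is essentially the theorem itself. A nonzero prime $\mathfrak q$ of $\sA'$ with $\mathfrak q\cap A'\subset P'$ is literally the same datum as a proper irreducible Zariski-closed subspace $Y=V(\mathfrak q)\subsetneq X'$ such that $\eta_W$ lies in the special fibre of $(\bar Y)^{\fX'}$ --- which is precisely the inductive step to be established. The entire technical content of the paper's proof (Noether normalization adapted to $W$, the fibre $X_{\xi_d}$ over the Gauss point, the flatness/openness and Shilov-boundary arguments showing the tube $\red^{-1}(\eta_W)$ meets the flat locus, and the density of points with coordinates in $\bar Q$, i.e.\ Lemmas~\ref{lem:coordinate.density} and~\ref{lem:point.not.dense}) exists exactly to produce such a point, hence such a prime. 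By asserting $\dim\sA'_{P'}\geq 1$ you have assumed the crux rather than proved it.

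Nor does any off-the-shelf dimension theory deliver it: when $k$ is not discretely valued, $A'$ is non-Noetherian, and since the condition $\mathfrak q\cap A'\subset P'$ lives in $\Spec A'$, the Noetherianness of the affinoid algebra $\sA'$ is of no direct use; even over a complete discrete valuation ring, identifying $\dim\sA'_{P'}$ with $\operatorname{codim}(W,\fX'_s)$ requires excellence, catenarity and a dimension-formula argument, not a computation, and the companion algebraic statement \cite[Theorem~4.1]{gubler_kuenneman19:positivity} needed its own nontrivial proof. Two smaller points: your argument that $\mathfrak Q\cap A'\not\subset k^{\circ\circ}A'$ via $k^{\circ\circ}$-adic separatedness is shaky in the non-discrete case (the scalars you factor out may have absolute values tending to $1$), but this is harmless because nonzero reduction of $g$ is never actually used --- only $g\neq 0$, $g$ a non-unit of $\sA'$, and $A'\cap g\sA'\subset P'$ enter; and your claim that maximality of $\dim(A/\mathfrak P)$ characterizes when $W$ is a component of $\fX_s$ is unnecessary for (and not quite justified by) the argument, since the case $d=n'$ is handled directly by pure-dimensionality of $\fX'_s$. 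The proof stands or falls with the unproved claim that $\sA'_{P'}$ is not a field when $d<n'$, and as written it falls there.
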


For flat algebraic schemes over $k^\circ$, this was shown in \cite[Theorem 4.1]{gubler_kuenneman19:positivity}.

Under the hypotheses of Theorem \ref{thm:affine lifting}, we let $\fY = (\bar Y)^\fX$ for any pure dimensional Zariski-closed subset $Y$.  Then the special fiber $\fY_s$ is pure-dimensional of the same dimension as $Y$ by~\cite[Proposition~2.7(3)]{gubler_rabinoff_werner:tropical_skeletons}. If  $W$ is an irreducible component of $\fY_s$, it follows that $\dim(Y)$ is automatically equal to $\dim(W)$.

  The proof of Theorem \ref{thm:affine lifting} proceeds by induction on $n=\dim(X)$, the case $\dim(X)=0$ being trivial.  If $\dim(X) = \dim(W)$ then we are done, so assume $n > d$.  Passing to an irreducible component of $X$ and its closure in $\fX$, we will assume from now on that $\sA$ is an integral domain.

\begin{art}[Noether Normalization]\label{sec:noether-normalization}
  Since $A$ is an admissible $k^\circ$-algebra, there exists a surjective homomorphism $k^\circ\angles{x_1,\ldots,x_N}\surject A$.  By Noether Normalization~\cite[Theorem~6.1.2/1(i)]{bosch_guntzer_remmert84:non_archimed_analysis}, after composing with an automorphism of $k\angles{x_1,\ldots,x_N}$, we may assume that the homomorphism $k\angles{x_1,\ldots,x_n}\to\sA$ is finite and injective ($n\leq N$).  Any automorphism of $k\angles{x_1,\ldots,x_N}$ induces an automorphism of $k^\circ\angles{x_1,\ldots,x_N} = k\angles{x_1,\ldots,x_N}^\circ$, so this restricts to a homomorphism $f\colon k^\circ\angles{x_1,\ldots,x_n}\inject A$, which is integral by~\cite[Theorem~6.3.5/1]{bosch_guntzer_remmert84:non_archimed_analysis} and the fact that $A\subset\sA^\circ$.  The homomorphism on special fibers
  \[ f_s\colon \td k[x_1,\ldots,x_n] \To A_s \]
  is an integral homomorphism of finitely generated $\td k$-algebras, and is thus finite.

  Let $J\subset A_s$ be the ideal defining $W\subset\fX_s$.  By~\cite[Lemma~10.115.4]{stacks-project} and the argument in its proof based on~\cite[Lemmas~10.115.2 and~10.115.3]{stacks-project}, there is a $\Z$-automorphism $g$ of $\Z[x_1,\dots,x_n]$ such that, setting $y_i = g(x_i)$, the homomorphism
  \[ \td k[y_1,\ldots,y_d] \To A_s/J \]
  is finite and injective (take $S = \td k[x_1,\ldots,x_n]/f_s\inv(J)\subset A_s/J$ in \emph{ibid}). 
   The map $x_i\mapsto g(x_i)$ is an automorphism of $k^\circ\angles{x_1,\ldots,x_n}$, so we may replace the $x_i$ by the $y_i$ to assume:
  \begin{itemize}
  \item The homomorphism $f_\eta\colon k\angles{x_1,\ldots,x_n}\to\sA$ is finite and injective.
  \item The homomorphism $f_s\colon\td k[x_1,\ldots,x_n]\to A_s$ is finite.
  \item The restriction $\td k[x_1,\ldots,x_d]\to A_s/J$ is finite and injective.
  \end{itemize}
  We write
  \[ \Psi\colon \fX\to\fB^n = \Spf(k^\circ\angles{x_1,\ldots,x_n}) \]
  for the finite, surjective morphism corresponding to $f$, and
  \[ \Phi\colon \fX\to\fB^d = \Spf(k^\circ\angles{x_1,\ldots,x_d}) \]
  for the composition of $\Psi$ with the projection $\fB^n\to\fB^d$ onto the first $d$ factors, so we have a commutative triangle
  \begin{equation} \label{eq:Phi.Psi}
    \begin{tikzcd}
      \fX \arrow[rr, "\Psi"] \drar["\Phi"'] & & \bB^n \dlar["\text{projection}"] \\
      & {\bB^d\rlap.} 
    \end{tikzcd}
  \end{equation}
The special fiber of $\Phi$ restricted to $W$ corresponds to the finite, injective homomorphism $\td k[x_1,\ldots,x_d]\to A_s/J$, hence gives a finite, surjective homomorphism
  \[ \Phi_s|_W\colon W\surject\bA^d_{\td k}. \]
\end{art}

\begin{art}[Analytic Fibers]\label{sec:fibers}
  Let $\xi_d$ (resp.\ $\xi_n$) be the Gauss point of $\bB^d = (\fB^d)_\eta$ (resp.\ $\bB^n = (\fB^n)_\eta$).   The only preimage of the generic point $\eta_d$ of $\bA^d_{\td k}$ under the reduction map $\red\colon\bB^d\to\bA^d_{\td k}$ is the Gauss point $\xi_d$ by~\cite[Proposition~2.16(2)]{gubler_rabinoff_werner:tropical_skeletons}.  The reduction map is functorial and surjective by~\cite[Proposition~2.16(1)]{gubler_rabinoff_werner:tropical_skeletons}, so we have a commutative square with surjective arrows indicated:
  \[
    \begin{tikzcd}
      X \rar["\mathrm{red}", two heads] \dar["\Phi_\eta"'] & \fX_s \dar["\Phi_s", two heads] \\
      \bB^d \rar["\mathrm{red}"', two heads] & \bA^d_{\td k}
    \end{tikzcd}
  \]
  Hence we have equalities
  \[ X_{\xi_d} \coloneq \Phi_\eta\inv(\xi_d) = \Phi_\eta\inv(\red\inv(\eta_d))
    = \red\inv(\Phi_s\inv(\eta_d)).
  \]
  The analytic fiber $X_{\xi_d}$ is a strictly $\sH(\xi_d)$-analytic space, and it carries the induced topology from the inclusion $X_{\xi_d}\subset X$ by~\cite[\S1.4, p.30]{berkovic93:etale_cohomology}.

  Since $\Phi_s|_W$ is finite and dominant, we have $\Phi_s\inv(\eta_d)\cap W = \{\eta_W\}$ for the generic point $\eta_W$ of $W$; taking inverse images under reduction gives
  \[ \red\inv(\eta_W) = \red\inv(\Phi_s\inv(\eta_d))\cap\red\inv(W) = X_{\xi_d}\cap\red\inv(W). \]
  By anticontinuity of the reduction map \cite[Propostion 2.17]{gubler_rabinoff_werner:tropical_skeletons}, this is a nonempty open subset of $X_{\xi_d}$.
\end{art}

\begin{art}[Coordinates on $\bB^n_{\xi_d}$]
  Consider the fiber $\bB^n_{\xi_d}$ over $\xi_d$ of the projection $\bB^n\to\bB^d$.  This is a strictly $\sH(\xi_d)$-analytic space that is naturally identified with the ball $\bB^{n-d}_{\sH(\xi_d)}$, and $\Psi_\eta\colon X\to\bB^n$ restricts to a finite morphism $\Psi_{\xi_d}\colon X_{\xi_d}\to\bB^n_{\xi_d}$.  There is a ``coordinate map''
  \[
    \bar{\sH(\xi_d)}{}^{n-d} \To
    \Max\bigr(\sH(\xi_d)\angles{x_{d+1},\ldots,x_n}\bigl) \,\To\,
    \bB^{n-d}_{\sH(\xi_d)}
  \]
  whose image is the set of rig-points.  Concretely, the rig-point $y\in\bB^{n-d}_{\sH(\xi_d)}$ with coordinates $(\alpha_{d+1},\ldots,\alpha_n)\in\bar{\sH(\xi_d)}{}^{n-d}$ is defined by the semi-norm $f\mapsto|f(\alpha_{d+1},\ldots,\alpha_n)|$ on $\sH(\xi_d)\angles{x_{d+1},\ldots,x_n}$.

  Let $Q = \Frac(k\angles{x_1,\ldots,x_d})$, so that $\sH(\xi_d)$ is the completion of $Q$ with respect to the Gauss norm.  The algebraic closure of $Q$ in $\bar{\sH(\xi_d)}$ is denoted by $\bar Q$. 
  We say that a rig-point $y\in \bB^{n-d}_{\sH(\xi_d)}$ has \defi{coordinates in $\bar Q$} if there exists $(\alpha_{d+1},\ldots,\alpha_n)\in\bar Q{}^{n-d}$ mapping to $y$ under the coordinate map.
\end{art}

\begin{lem}\label{lem:coordinate.density}
  Let $\Omega$ be an nonempty open subset of $X_{\xi_d}$.  Then there exists a rig-point $y\in\Psi_{\xi_d}(\Omega)$ with coordinates in $\bar Q$.
\end{lem}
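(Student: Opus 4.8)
The plan is to first produce a nonempty open subset $V$ of $\bB^{n-d}_{\sH(\xi_d)}$ contained in $\Psi_{\xi_d}(\Omega)$, and then to exhibit inside $V$ a rig-point with coordinates in $\bar Q$ by a Krasner-type approximation; the second step is routine, so the substance lies in the first.

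For the first step I would show that $\Psi_{\xi_d}(\Omega)$ has nonempty interior. Since $\sA$ is a domain, $X=\sM(\sA)$ is irreducible of dimension $n$, and $X_{\xi_d}$, being the fibre of $\Phi_\eta\colon X\to\bB^d$ over the Gauss point $\xi_d$, is purely of dimension $n-d$: indeed $\xi_d$ is a point of $\bB^d$ of maximal dimension $d$, so it lies in a dense Zariski-open subset of $\bB^d$ over which the fibres of $\Phi_\eta$ are equidimensional of dimension $\dim(X)-\dim(\bB^d)=n-d$ (generic flatness); alternatively this can be checked directly, since the affinoid algebra of $X_{\xi_d}$ is finite over $\sH(\xi_d)\angles{x_{d+1},\dots,x_n}$ while $\sA$ is a domain finite over $k\angles{x_1,\dots,x_n}$. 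By generic freeness applied to the finite module $\sA$ over the Noetherian domain $k\angles{x_1,\dots,x_n}$, there is a nonzero $g\in k\angles{x_1,\dots,x_n}$ such that $\Psi_\eta$ is finite and flat over the complement of the hypersurface $\{g=0\}$ in $\bB^n$. Since $k\angles{x_1,\dots,x_d}$ injects into $\sH(\xi_d)$, the image $\bar g$ of $g$ in $\sH(\xi_d)\angles{x_{d+1},\dots,x_n}$ is nonzero; and as $\Psi_{\xi_d}$ is the base change of $\Psi_\eta$ along the inclusion $\bB^n_{\xi_d}\hookrightarrow\bB^n$ of the fibre over $\xi_d$ of the projection $\bB^n\to\bB^d$, it follows that $\Psi_{\xi_d}$ is finite and flat over the dense Zariski-open subset $B_0\coloneqq\bB^{n-d}_{\sH(\xi_d)}\setminus\{\bar g=0\}$. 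Then $\Psi_{\xi_d}\inv(\{\bar g=0\})$ is Zariski-closed in $X_{\xi_d}$ of dimension $<n-d$ (it is finite over the hypersurface $\{\bar g=0\}$), hence nowhere dense because $X_{\xi_d}$ is purely $(n-d)$-dimensional; so $\Psi_{\xi_d}\inv(B_0)$ is a dense open subset of $X_{\xi_d}$ and meets the nonempty open set $\Omega$. Setting $\Omega_0\coloneqq\Omega\cap\Psi_{\xi_d}\inv(B_0)$, a nonempty open subset, and using that a finite flat morphism is open, the image $V\coloneqq\Psi_{\xi_d}(\Omega_0)$ is a nonempty open subset of $B_0$, hence of $\bB^{n-d}_{\sH(\xi_d)}$, and $V\subseteq\Psi_{\xi_d}(\Omega)$.

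For the second step I would show that the rig-points of $\bB^{n-d}_{\sH(\xi_d)}$ with coordinates in $\bar Q$ are dense, which finishes the proof since then such a point $y$ lies in $V$ and hence $y\in V\subseteq\Psi_{\xi_d}(\Omega)$. First, $\bar Q$ is dense in $\bar{\sH(\xi_d)}$: for $\beta\in\bar{\sH(\xi_d)}$ with monic minimal polynomial $P\in\sH(\xi_d)[T]$, approximating the coefficients of $P$ by elements of the dense subfield $Q=\Frac(k\angles{x_1,\dots,x_d})$ gives a monic $P_0\in Q[T]$ of the same degree with $|P_0(\beta)|$ arbitrarily small, and since $P_0$ splits over $\bar{\sH(\xi_d)}$ it has a root $\beta_0\in\bar Q$ with $|\beta-\beta_0|$ arbitrarily small. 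Then, using that the rig-points with coordinates in $\bar{\sH(\xi_d)}{}^{n-d}$ (of norm at most $1$) are dense and that, for each fixed $f\in\sH(\xi_d)\angles{x_{d+1},\dots,x_n}$, the map $(\alpha_{d+1},\dots,\alpha_n)\mapsto|f(\alpha_{d+1},\dots,\alpha_n)|$ is continuous on the unit polydisc of $\bar{\sH(\xi_d)}$, one approximates the coordinates of any such rig-point by elements of $\bar Q$ to conclude that every basic open neighbourhood of it contains a rig-point with coordinates in $\bar Q$.

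The hard part is the first step — showing that $\Psi_{\xi_d}(\Omega)$ is not nowhere dense — which rests on the pure-dimensionality of $X_{\xi_d}$, the generic flatness of $\Psi_\eta$, and the openness of finite flat morphisms; the approximation in the second step is standard.
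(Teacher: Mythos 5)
Your overall strategy is sound and, in its key step, genuinely different from the paper's. The paper also reduces to showing that the flat locus of $\Psi_\eta$ meets $X_{\xi_d}$ in a dense subset, but it does so by showing that the non-flat locus $T$ is Zariski-closed and disjoint from $X_{\xi_n}$, and then proving---via canonical reductions and a Shilov-boundary argument---that $X_{\xi_n}$ meets every irreducible component of $X_{\xi_d}$, so that $B=\Psi_\eta\inv(\Psi_\eta(T))$ cannot swallow any component. You instead invoke generic freeness of the finite module $\sA$ over the Noetherian domain $k\angles{x_1,\ldots,x_n}$ to produce an explicit nonzero $g$ with $\sA_g$ free, note that its image $\bar g$ in $\sH(\xi_d)\angles{x_{d+1},\ldots,x_n}$ is nonzero, and bound the bad locus inside $X_{\xi_d}$ by the preimage of the hypersurface $\{\bar g=0\}$, which is finite over it and hence of dimension at most $n-d-1$. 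This neatly replaces the Shilov-boundary/canonical-reduction argument by elementary commutative algebra; the price is that you must know that flatness of the finite morphism persists after the base change to the fibre (fine at the level of the affinoid algebras, since the fibre algebra is $\sA\otimes_{k\angles{x_1,\ldots,x_n}}\sH(\xi_d)\angles{x_{d+1},\ldots,x_n}$ and stalks of a finite morphism are summands of the fibre of the algebra) and that finite flat morphisms of analytic spaces are open, which is the same citation the paper uses. Your second step (density of $\bar Q$ in $\bar{\sH(\xi_d)}$ and approximation of coordinates via continuity of the coordinate map and density of rig-points) is essentially identical to the paper's.

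The one genuine weak point is your justification of the pure-dimensionality of $X_{\xi_d}$, which your argument needs just as essentially as the paper's does: without it, a low-dimensional irreducible component of $X_{\xi_d}$ could be contained in $\Psi_{\xi_d}\inv(\{\bar g=0\})$ and $\Omega$ could sit inside it. Your ``alternative'' justification---finiteness of the fibre algebra over $\sH(\xi_d)\angles{x_{d+1},\ldots,x_n}$ together with $\sA$ being a domain---only yields the upper bound $\dim X_{\xi_d}\le n-d$ and says nothing about components of smaller dimension, so it is not a proof of purity. Your primary justification via ``generic flatness'' over $\bB^d$ is also not the standard Grothendieck lemma, since $\sA$ is not a finite-type $k\angles{x_1,\ldots,x_d}$-algebra in the ring-theoretic sense; making it precise requires the analytic flatness theory and a dimension formula, i.e., input at the level of Ducros's work. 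This is exactly the point where the paper cites Ducros's result that the fibre over an Abhyankar point such as the Gauss point $\xi_d$ has pure dimension $n-d$; citing that (or supplying a real argument, e.g., flatness of $k\angles{x_1,\ldots,x_n}\to\sH(\xi_d)\angles{x_{d+1},\ldots,x_n}$ and torsion-freeness of the fibre algebra over $\sH(\xi_d)\angles{x_{d+1},\ldots,x_n}$, which forces every minimal prime to contract to $(0)$) closes the gap, after which your proof is complete.
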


\begin{proof}
  Recall that we are assuming $\sA$ to be an integral domain.  We say that the finite morphism $\Psi_\eta\colon X\to\bB^n$ is \defi{flat} at a point $x\in X$ provided that $\sO_{X,x}$ is a flat $\sO_{\bB^n,\Psi_\eta(x)}$-algebra.  The non-flat locus $T\subset X$ is Zariski-closed by~\cite[Proposition~3.2.8]{berkovic93:etale_cohomology}.  Arguing as in~\artref{sec:fibers} and using the surjectivity of $\Psi_s\colon\fX_s\to\bA^n_{\td k}$, we see that the fiber $X_{\xi_n} \coloneq \Psi_\eta\inv(\xi_n)$ is nonempty.  Since $\xi_n$ is an Abhyankar point of the reduced analytic space $\bB^n$, the local ring $\sO_{\bB^n,\xi_n}$ is a field by~\cite[Example~3.2.10]{ducros18:families}.  It follows that $T$ is disjoint from $X_{\xi_n}$.  Let $B = \Psi_\eta\inv(\Psi_\eta(T))$.  Since $\Psi_\eta$ is finite and continuous with respect to the Zariski topologies, this is a Zariski-closed subset of $X$ containing $T$ and disjoint from $X_{\xi_n}$.  The subset $U = X\setminus B$ is a nonempty Zariski-open subset of $X$, and the restriction $\Psi_\eta|_U\colon U\to\bB^n\setminus\Psi_\eta(T)$ is finite and flat, so it is open by~\cite[Proposition~3.2.7]{berkovic93:etale_cohomology}.  It follows that $\Psi_{\xi_d}|_{U\cap X_{\xi_d}}\colon U\cap X_{\xi_d}\to\bB^n_{\xi_d}\setminus\Psi_\eta(T)$ is again an open map.%
  \footnote{Let $f\colon X\to Y$ be a continuous, open map of topological spaces, and let $Z\subset Y$ be a subset.  Then $f|_{f\inv(Z)}\colon f\inv(Z)\to Z$ is open with respect to the subspace topologies.}

  Since $\xi_d$ is an Abhyankar point of $\bB^d$, the fiber $X_{\xi_d}$ has pure dimension $n-d$ by~\cite[Lemma~1.5.11]{ducros18:families}.  We claim that $X_{\xi_d}\cap B$ has dimension smaller than $n-d$.  Since $X_{\xi_n}$ is disjoint from $B$, it suffices to show that $X_{\xi_n}$ meets every irreducible component of $X_{\xi_d}$.  Taking the fiber of the triangle~\eqref{eq:Phi.Psi} over $\xi_d\in\bB^d$ yields a finite morphism $\Psi_{\xi_d}\colon X_{\xi_d}\to\bB^n_{\xi_d} = \bB^{n-d}_{\sH(\xi_d)}$ of strictly analytic spaces over $\sH(\xi_d)$.  By~\cite[Proposition~2.10]{gubler_rabinoff_werner:tropical_skeletons}, the canonical reduction $\td X_{\xi_d}$ has pure dimension $n-d$ and the morphism of canonical  reductions $\td\Psi_{\xi_d}\colon\td X_{\xi_d}\to\td\bB^{n-d}_{\sH(\xi_d)} = \bA^{n-d}_{\td k(x_1,\ldots,x_d)}$ is finite.  It follows that $\td\Psi_{\xi_d}$ maps the generic points of $\td X_{\xi_d}$ to the generic point of $\bA^{n-d}_{\td k(x_1,\ldots,x_d)}$, so by~\cite[Proposition~2.4.4]{berkovic90:analytic_geometry}, the map $\Psi_{\xi_d}$ takes the Shilov boundary of $X_{\xi_d}$ to the Shilov boundary $\{\xi_n\}$ of $\bB^{n-d}_{\sH(\xi_d)}$.  Hence the Shilov boundary of $X_{\xi_d}$ is contained in $X_{\xi_n}$.  By~\cite[Proposition~2.15]{gubler_rabinoff_werner:tropical_skeletons}, the Shilov boundary of $X_{\xi_d}$ is equal to the union of the Shilov boundaries of the irreducible components of $X_{\xi_d}$, all of which are nonempty by~\cite[Proposition~2.4.4]{berkovic90:analytic_geometry}.  This proves the claim.

Since $X_{\xi_d}$ is pure dimensional and $X_{\xi_d}\cap B$ has smaller dimension, the set $U\cap X_{\xi_d}$ is dense in $X_{\xi_d}$, so $U\cap\Omega\neq\emptyset$.  Since $\Psi_{\xi_d}|_{U\cap X_{\xi_d}}\colon U\cap X_{\xi_d}\to\bB^n_{\xi_d}\setminus\Psi_\eta(T)$ is an open map, the subset $V\coloneq\Psi_{\xi_d}(U\cap\Omega)$ is nonempty and open in  $\bB^n_{\xi_d}=\bB^{n-d}_{\sH(\xi_d)}$.  The coordinate map
\[ c\colon \bar{\sH(\xi_d)}{}^{n-d} \To \bB^{n-d}_{\sH(\xi_d)} \]
is continuous with respect to the analytic topology on $\bB^{n-d}_{\sH(\xi_d)}$ and the usual (ultrametric) topology on $\bar{\sH(\xi_d)}{}^{n-d}$, and the set of rig-points of $\bB^{n-d}_{\sH(\xi_d)}$ is dense by~\cite[Proposition~2.1.15]{berkovic90:analytic_geometry}, so $c\inv(V)$ is a nonempty open subset of $\bar{\sH(\xi_d)}{}^{n-d}$.  The algebraic closure $\bar Q$ is dense in $\bar{\sH(\xi_d)}$, as the completion of $\bar Q$ is an algebraically closed field  \cite[Proposition 3.4.1/3]{bosch_guntzer_remmert84:non_archimed_analysis} containing $\sH(\xi_d) = \hat Q$ where the latter denotes the completion of $Q$.  Thus there exists $(\alpha_{d+1},\ldots,\alpha_n)\in\bar Q{}^{n-d}$ mapping to a point $y$ in $V\subset \Psi_{\xi_d}(\Omega)$. 
\end{proof}

We will apply Lemma~\ref{lem:coordinate.density} to $\Omega = \red\inv(\eta_W) = X_{\xi_d}\cap\red\inv(W)$ which is a non-empty open subset of $X_{\xi_d}$ by \artref{sec:fibers}.  Recall that we are assuming $\sA$ to be an integral domain.

\begin{lem}\label{lem:point.not.dense}
  Let $x\in X_{\xi_d}$, and suppose that $y = \Psi_{\xi_d}(x)$ has coordinates in $\bar Q$.  Then $\{x\}$ is not Zariski-dense in $X$.
\end{lem}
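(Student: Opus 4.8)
The statement to prove is Lemma~\ref{lem:point.not.dense}: given $x \in X_{\xi_d}$ with $y = \Psi_{\xi_d}(x)$ having coordinates $(\alpha_{d+1},\dots,\alpha_n) \in \bar Q^{n-d}$, show $\{x\}$ is not Zariski-dense in $X$. Since $\mathcal A$ is an integral domain, $\{x\}$ being Zariski-dense in $X$ means $x$ lies in no proper Zariski-closed subset, equivalently the only analytic function vanishing at $x$ is $0$.

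Let me think about what's going on. We have the finite surjection $\Psi_\eta\colon X \to \bB^n$, and $\Psi_{\xi_d}\colon X_{\xi_d} \to \bB^n_{\xi_d} = \bB^{n-d}_{\mathcal H(\xi_d)}$. The point $y$ is a rig-point with coordinates in $\bar Q$, where $Q = \Frac(k\angles{x_1,\dots,x_d})$ and $\bar Q$ is the algebraic closure of $Q$ inside $\bar{\mathcal H(\xi_d)}$. The key idea: a point with algebraic coordinates over $Q$ should be "cut out" by polynomial equations with coefficients in $Q$, hence (after clearing denominators) with coefficients in $k\angles{x_1,\dots,x_d}$, and these should pull back to nonzero analytic functions on $X$ vanishing at $x$.

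**The plan.** First I would translate the condition that $\alpha_i \in \bar Q$ into the existence of a nonzero polynomial relation. Each $\alpha_i$ satisfies a monic polynomial $P_i(T) \in Q[T]$; after multiplying by a common denominator in $k\angles{x_1,\dots,x_d}$, we get a nonzero polynomial $\tilde P_i(T) \in k\angles{x_1,\dots,x_d}[T]$ with $\tilde P_i(\alpha_i) = 0$. Now I would use the coordinate functions: on $\bB^n = \Spf(k^\circ\angles{x_1,\dots,x_n})_\eta$, the function $x_i$ (for $d < i \le n$) restricts on the fiber $\bB^n_{\xi_d} = \bB^{n-d}_{\mathcal H(\xi_d)}$ to the standard coordinate, whose value at the rig-point $y$ with coordinates $(\alpha_{d+1},\dots,\alpha_n)$ is precisely $\alpha_i$. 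Meanwhile $x_1,\dots,x_d$ pull back from $\bB^d$ and are "constant along the fiber" in the sense that their image in $\mathcal H(\xi_d)$ is fixed. The element $\tilde P_i(x_i) \in k\angles{x_1,\dots,x_n}$ (viewing the coefficients, which lie in $k\angles{x_1,\dots,x_d}$, as functions in the first $d$ variables, and $x_i$ as the $i$-th variable) is a nonzero analytic function on $\bB^n$, since $k\angles{x_1,\dots,x_n}$ is a domain and $\tilde P_i$ is a nonzero polynomial in $x_i$ with coefficients in the subring $k\angles{x_1,\dots,x_d}$. Its value at $y \in \bB^n_{\xi_d}$ is $\tilde P_i(\alpha_i)$ evaluated after the coefficients are sent to $\mathcal H(\xi_d)$, which is $0$. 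So $f_i \coloneqq \tilde P_i(x_i)$ is a nonzero function on $\bB^n$ with $f_i(y) = 0$.

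**Finishing and the main obstacle.** Pull back: $\Psi_\eta^*(f_i)$ is a function on $X$. Since $\Psi_\eta$ is finite (hence the ring map $k\angles{x_1,\dots,x_n} \to \mathcal A$ is injective, as established in the Noether normalization step), $\Psi_\eta^*(f_i) \ne 0$ in $\mathcal A$. And $\Psi_\eta^*(f_i)(x) = f_i(\Psi_\eta(x)) = f_i(y) = 0$ — here I need to be a little careful that $\Psi_\eta(x) = y$ as a point of $\bB^n$, which holds because $x \in X_{\xi_d}$ maps into the fiber $\bB^n_{\xi_d}$ and $\Psi_{\xi_d}(x) = y$ by hypothesis. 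Therefore the nonzero Zariski-closed subspace $V(\Psi_\eta^*(f_i)) \subsetneq X$ contains $x$, so $\{x\}$ is not Zariski-dense. I would most likely only need a single index $i$, namely any $i$ with $\alpha_i \ne 0$; if all $\alpha_i = 0$ then $y$ is the origin of $\bB^{n-d}_{\mathcal H(\xi_d)}$, and already the function $x_{d+1}$ itself pulls back to a nonzero function vanishing at $x$ — so that degenerate case is even easier. The main obstacle is the bookkeeping around the identification $\bB^n_{\xi_d} \cong \bB^{n-d}_{\mathcal H(\xi_d)}$ and checking that evaluating $\tilde P_i(x_i)$ at the rig-point $y$ really does amount to plugging $\alpha_i$ into $\tilde P_i$ with its coefficients reduced into $\mathcal H(\xi_d)$; this is exactly the content of the "coordinate map" description in the preceding \artref{sec:fibers}-style paragraph, so it should be a formal verification rather than a genuine difficulty. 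One subtlety worth a sentence: $\bar Q$ was defined as the algebraic closure of $Q$ \emph{inside} $\bar{\mathcal H(\xi_d)}$, so "coordinates in $\bar Q$" literally means algebraic over $Q$, which is what makes the minimal polynomials $P_i(T) \in Q[T]$ available.
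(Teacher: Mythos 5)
Your proposal is correct and follows essentially the same route as the paper's proof: choose a polynomial relation over $Q$ certified by the hypothesis $\alpha_i\in\bar Q$, clear denominators to land in $k\angles{x_1,\ldots,x_d}[x_{d+1},\ldots,x_n]$, observe the result is a nonzero analytic function on $\bB^n$ vanishing at $y$, and pull back along the finite injective $\Psi_\eta$ to get a nonzero function on $X$ vanishing at $x$. The only cosmetic difference is that the paper works with one multi-variable polynomial $f\in Q[x_{d+1},\ldots,x_n]$ while you use a univariate minimal polynomial of a single chosen coordinate; both are immediate consequences of $(\alpha_{d+1},\ldots,\alpha_n)\in\bar Q^{n-d}$.
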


\begin{proof}
  Suppose that $y$ has coordinates $(\alpha_{d+1},\ldots,\alpha_n)\in\bar Q{}^{n-d}$.   Choose a nonzero polynomial $f\in Q[x_{d+1},\ldots,x_n]$ such that $f(\alpha_{d+1},\ldots,\alpha_n) = 0$.  Since $f$ is a \emph{polynomial} in $x_{d+1},\ldots,x_n$, we may clear denominators to assume $f\in k\angles{x_1,\ldots,x_d}[x_{d+1},\ldots,x_n]$.  Thus $f$ defines a nonzero analytic  function on $\bB^n$ that vanishes on $y$.  Then $g = \Psi_\eta^*(f)$ is a nonzero analytic function on $X$ that vanishes on $x$.
\end{proof}

\begin{proof}[Proof of Theorem~\ref{thm:affine lifting}]
  By Lemma~\ref{lem:coordinate.density}, there exists $x\in\red\inv(\eta_W) = X_{\xi_d}\cap\red\inv(W)$ such that $\Psi_{\xi_d}(x)$ has coordinates in $\bar Q$, and by Lemma~\ref{lem:point.not.dense}, the Zariski-closure $Y$ of $\{x\}$ in $X$ has dimension strictly smaller than $n$.  Since $\red(x)=\eta_W$, we have $W=\overline{\red(x)}\subset  (\bar Y)^\fX$.  Replacing $\fX$ by $(\bar Y)^\fX$, we win by induction on $n$.
\end{proof}

\section{The local lifting theorem} \label{section:local lifting theorem}

We generalize the affine lifting theorem~\ref{thm:affine lifting} to the case of an arbitrary formal $k^\circ$-model $\fX$ of a good strictly $k$-analytic space $X$.  In this situation, we prove only a local result.  We thank Antoine Ducros warmly for helping us with the proof.

\begin{thm} \label{local lifting theorem}
  Let $X$ be a good, compact, strictly $k$-analytic space, let $\fX$ be an admissible formal $k^\circ$-model of $X$, let $x\in X$, and let $\eta\in\fX_s$ be the reduction of $x$.  Suppose that the Zariski closure of $\{\eta\}$ has dimension $d$.  Then for every neighborhood $W$ of $x$, there is a point $z\in W\cap\red\inv(\eta)$, a neighborhood $U$ of $z$ contained in $W$, and an irreducible $d$-dimensional Zariski-closed subspace $Y\subset U$  containing $z$.
\end{thm}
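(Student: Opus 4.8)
The plan is to deduce the statement from the Affine Lifting Theorem~\ref{thm:affine lifting} by first localizing $\fX$, and then transporting the Zariski-closed subspace obtained there---which lives on an affinoid domain of $X$---onto a genuine open neighborhood of a point near $x$. This last transport is what forces a merely local conclusion, and it is the step that uses the hypothesis that $X$ is good.

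\emph{Step 1: localization.} As $\fX$ is quasi-compact, I would choose an affine open formal subscheme $\fX_0=\Spf(A)\subseteq\fX$ with $\eta\in(\fX_0)_s$. Then $X_0\coloneqq(\fX_0)_\eta=\red\inv((\fX_0)_s)$ is a strictly affinoid subdomain of $X$ containing $x$, and $\red_{\fX_0}$ is the restriction of $\red=\red_\fX$. Since $(\fX_0)_s$ is an open neighborhood in $\fX_s$ of the generic point $\eta$ of the $d$-dimensional irreducible closed set $\overline{\{\eta\}}^{\fX_s}$, the set $W_0\coloneqq\overline{\{\eta\}}^{(\fX_0)_s}$ is again irreducible of dimension $d$ with generic point $\eta$. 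Passing to the reduced irreducible component of $X_0$ through $x$, and to its formal closure inside $\fX_0$, I may moreover assume $X_0$ is integral; as $\eta=\red(x)$ still lies in the new, smaller (closed) special fiber, the closure $W_0=\overline{\{\eta\}}$ is unchanged and still has dimension $d$.

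\emph{Step 2: affine lifting with a prescribed neighborhood.} Now I would run the proof of Theorem~\ref{thm:affine lifting} for the pair $(\fX_0,W_0)$, with one change: at each application of Lemma~\ref{lem:coordinate.density}, intersect the open set $\red\inv(\eta)$---which by construction is the relevant nonempty open subset of the fiber over the Gauss point, in the notation there---with the fixed neighborhood $W$. This remains a nonempty open set, openness being clear from anticontinuity of $\red$ and nonemptiness because $x$ belongs to it ($\red(x)=\eta$ and $x\in W$). Applying Lemma~\ref{lem:point.not.dense} to the resulting point exactly as in that proof yields a point whose Zariski closure in $X_0$ is not dense; replacing $\fX_0$ by the formal closure of that Zariski closure, $W_0$ by itself (it is again the closure of $\eta$ in the new special fiber), $W$ by its intersection with the new generic fiber, and feeding the point just produced in place of $x$ at the next stage, one descends by induction on $\dim X_0$. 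Since at each stage the Zariski closure contains $W_0$ in its special fiber, its dimension is $\geq d$, so the induction stops exactly at dimension $d$. The outcome is a point $z\in W\cap\red\inv(\eta)$ and an integral Zariski-closed subspace $Y_0$ of $X_0$ of dimension $d$ with $Y_0=$ the Zariski closure of $\{z\}$ in $X_0$.

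\emph{Step 3: from $X_0$ to an open neighborhood.} The subspace $Y_0$ is Zariski-closed in the affinoid domain $X_0$, but $X_0$ is only a \emph{closed} subset of $X$, not a neighborhood of $z$, and such a subspace generally does not extend to a neighborhood in $X$; this is where goodness is used and why the theorem is local. Since $X$ is good, $z$ has arbitrarily small affinoid neighborhoods; I would pick one, $V$, with $V\subseteq W$, and let $U$ be the topological interior of $V$ in $X$, an open neighborhood of $z$ contained in $W$. Let $Y$ be the irreducible component through $z$ of the Zariski closure of $\{z\}$ in $U$, taken with its reduced structure: it is an irreducible Zariski-closed subspace of $U$ containing $z$. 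Finally, $\dim Y=d$: since $U\cap X_0$ is a nonempty analytic subdomain of the integral $d$-dimensional affinoid $X_0$, it is equidimensional of dimension $d$, so the irreducible component through $z$ of the Zariski-closed subset $(U\cap X_0)\cap Y_0$ of $U\cap X_0$ has dimension $d$ and is contained in $Y$, whence $\dim Y\geq d$; and $\dim Y\leq d$ follows because the Zariski closure of $z$ is a local notion at $z$ and $z$ already lies in the $d$-dimensional $Y_0$ (equivalently, because $\red(z)=\eta$ with $\dim\overline{\{\eta\}}=d$). Taking this $z$, $U$ and $Y$ completes the proof.

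\emph{Expected main obstacle.} The substantive point is Step 3: the Affine Lifting Theorem produces a subspace only of the affinoid domain $X_0$, which is closed but not open in $X$, so one must replace it by a Zariski-closed subspace of an honest open neighborhood, and the dimension and irreducibility bookkeeping there rests on the standard but delicate facts that a nonempty analytic subdomain of an integral affinoid space is equidimensional of the same dimension and that Zariski closures of points are irreducible with the right local behavior. The remaining difficulty, purely organizational, is threading the constraint ``stay inside $W$'' through the induction in the proof of Theorem~\ref{thm:affine lifting}; and in a non-separated good space one should remember that $V\cap X_0$ is a finite union of affinoid domains rather than a single one, which affects none of the arguments.
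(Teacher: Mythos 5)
Your Steps 1--2 are essentially sound: threading the fixed neighborhood $W$ through the induction of Theorem~\ref{thm:affine lifting} does produce a point $z\in W\cap\red\inv(\eta)$ whose Zariski closure $Y_0$ in the affinoid domain $X_0=(\fX_0)_\eta$ is irreducible of dimension $d$ (with $W_0$ in the special fiber of its formal closure). The genuine gap is exactly where you locate ``the substantive point,'' namely Step 3, and the justifications you give there do not hold. You need the \emph{upper} bound $\dim Y\le d$ for (a component through $z$ of) the Zariski closure of $\{z\}$ in an open neighborhood $U$ of $z$ in $X$. Your first reason, ``the Zariski closure of $z$ is a local notion at $z$ and $z$ already lies in the $d$-dimensional $Y_0$,'' fails because $Y_0$ is Zariski-closed only in $X_0$, and $X_0$ is a \emph{closed} subset of $X$ that is in general not a neighborhood of $z$: for example, if $\fX=\widehat{\bP^1_{k^\circ}}$ and $\fX_0$ is the affine chart, then $X_0=\bB^1$ and the unique point of the tube over the generic point $\eta$ of $\bP^1_{\td k}$ is the Gauss point, which is not interior to $\bB^1$ in $\bP^{1,\an}$; and whenever $\overline{\{\eta\}}^{\fX_s}$ is not contained in an affine open, no choice of chart fixes this. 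The functions cutting out $Y_0$ live on $\sO(X_0)$ (they come from the Noether-normalization coordinates of the affinoid algebra) and need not extend to any neighborhood $U$ of $z$ in $X$, so the Zariski closure of $\{z\}$ in $U$ can be strictly larger than $d$-dimensional. Your alternative reason, ``because $\red(z)=\eta$ with $\dim\overline{\{\eta\}}=d$,'' is false as a general principle: the Gauss point of a ball of radius $r<1$ in $\bB^1$ reduces to a closed point of $\bA^1_{\td k}$ (so $d=0$), yet its Zariski closure in every neighborhood is a curve. Membership in the tube over $\eta$ only gives the \emph{lower} bound of Remark~\ref{lower bound in local lifting theorem}; the upper bound is essentially the statement $\centdim(X,z)\le d$, which is the content of the theorem (compare Corollary~\ref{cor:cent.dim.dense}), so asserting it at this point is circular.

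This is precisely why the paper states that the affine lifting theorem does \emph{not} imply the local one (``a Zariski-closed subspace of an affinoid domain in an analytic space does not usually extend to the whole space'') and gives an independent argument due to Ducros: reduce to $W=X$ via Raynaud's theorem and an admissible blow-up; lift the residue field extension $\kappa(\eta)/\td k$ to a valued extension $L/k$ of transcendence degree $d$ (Lemma~\ref{lem:lift.extension.ktilde}) and realize it by a point $t$ of the analytification of a $d$-dimensional affine $k$-scheme $T$; use Temkin's reduction of germs to choose an affinoid $T_0\subset T^\an$ and to prove that both projections of $X\times_k T_0$ are boundaryless along the tube over the lifted point $\eta'$; then apply Ducros's flat multisection theorem to obtain a $d$-dimensional affinoid $S$ mapping finitely to a neighborhood of $z$ in $X$. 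The desired $Y$ is the image of this finite map, which is Zariski-closed in an honest open neighborhood because images of finite morphisms are Zariski-closed---this is the mechanism that replaces, and cannot be recovered from, your Step 3. To repair your approach you would need a genuinely new argument producing a $d$-dimensional Zariski-closed subspace of an \emph{open} neighborhood, not of the affinoid $X_0$.
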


\begin{rem} \label{lower bound in local lifting theorem}
  In the situation of the theorem, any Zariski-closed subspace $Y\subset U$ with $z \in Y$ has dimension bounded below by $d$.  This can be seen as follows.  
For a point $z$ of a $k$-analytic space $Z$ we set
\[ d_k(z) = \trdeg(\td\sH(z)/\td k) + \dim_\Q\bigl((|\sH(z)|/|k|)\tensor_\Z\Q\bigr). \] 
It is a fact~\cite[1.4.6, A.4.11]{ducros18:families} that $\dim(Z) = \sup_{z\in Z}d_k(z)$.  Taking $Z = Y$ and $z\in Y$ as in the theorem, we have a canonical inclusion $\kappa(\eta)\inject\td\sH(z)$, where $\kappa(\eta)$ is the residue field of $\eta$.  In particular,
\[ \dim(Y) \geq d_k(z) \geq \trdeg(\td\sH(z)/\td k) \geq \trdeg(\td\kappa(\eta)/\td k) = d. \]
\end{rem}

We will use the following lemma in the  proof.

\begin{lem}\label{lem:lift.extension.ktilde}
  Let $F/\td k$ be a finitely generated field extension of transcendence degree $d$.  There exists a finitely generated field extension $L/k$ of transcendence degree $d$ equipped with an absolute value extending the given absolute value on $k$ and a $\td k$-isomorphism $\td L\isom F$.
\end{lem}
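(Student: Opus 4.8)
The plan is to build $L$ by a finite induction that mirrors a transcendence basis / primitive element decomposition of $F/\td k$. Write $F = \td k(t_1,\dots,t_d)(\alpha)$, where $t_1,\dots,t_d$ are algebraically independent over $\td k$ and $\alpha$ is a primitive element for the finite separable part; in positive characteristic one must be slightly more careful, since $F/\td k(t_1,\dots,t_d)$ need not be separable, so I would instead write $F = \td k(t_1,\dots,t_d)(\alpha_1,\dots,\alpha_r)$ as a tower of simple extensions, each either separable algebraic or purely inseparable of degree $p$, and handle the steps one at a time. The first step is the rational function field: take $L_0 = k(T_1,\dots,T_d)$ with the Gauss absolute value relative to $|\scdot|$ on $k$ (i.e.\ the valuation that is trivial on the $T_i$'s); its residue field is exactly $\td k(t_1,\dots,t_d)$ by the standard computation of the residue field of a Gauss valuation, and $\trdeg(L_0/k) = d$.

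The inductive step is: given a finitely generated valued extension $(L_i, |\scdot|)$ of $k$ with residue field $\td L_i$ and a $\td k$-isomorphism $\td L_i \isom F_i$, and given an element $\beta \in F_{i+1} \setminus F_i$ generating $F_{i+1}$ over $F_i$ with minimal polynomial $\bar p(X) \in F_i[X]$, produce $(L_{i+1}, |\scdot|)$ extending $(L_i,|\scdot|)$ with residue field mapping isomorphically onto $F_{i+1}$. For this, lift $\bar p$ to a monic polynomial $p(X) \in L_i^\circ[X]$ of the same degree (using that $\td L_i^\circ \surject \td L_i$), let $\beta'$ be a root of $p$ in an algebraic closure of $L_i$, and set $L_{i+1} = L_i(\beta')$. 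One extends the absolute value on $L_i$ to $L_{i+1}$ arbitrarily (always possible for algebraic extensions of complete, or even just Henselian, valued fields — and more elementarily, any absolute value on a field extends to any algebraic extension). The key point is then that the residue field of $L_{i+1}$ contains $\td L_i[\bar\beta']$ where $\bar\beta'$ is the reduction of $\beta'$, and $\bar\beta'$ satisfies $\bar p$; one checks, using that $\deg p = \deg \bar p$ and comparing $[L_{i+1}:L_i]$ with $[F_{i+1}:F_i]$ via the fundamental inequality $[L_{i+1}:L_i] \geq e\cdot f$, that $[\td L_{i+1} : \td L_i] = [F_{i+1}:F_i]$ exactly, so that $\td L_i[\bar\beta'] = \td L_{i+1}$ and the $\td k$-isomorphism $\td L_i \isom F_i$ extends uniquely to $\td L_{i+1}\isom F_{i+1}$ sending $\bar\beta' \mapsto \beta$. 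In the separable case $\bar p$ has nonzero discriminant, which makes the residue extension separable of the right degree automatically; in the purely inseparable degree-$p$ case $\bar p(X) = X^p - a$, one lifts $a$ and adjoins a $p$-th root, and the same degree count goes through (here one may need to allow $e = p$ rather than $f = p$, but transcendence degree is unaffected either way — actually to keep $\trdeg$ correct one wants $f=p$, which is arranged by choosing the lift $a$ so that it is a residue lift, not a new transcendental).

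After $r$ steps we obtain $L = L_r$, a finitely generated extension of $k$ — finitely generated because each step adjoins one algebraic element to a finitely generated field — equipped with an absolute value extending that of $k$, with $\td k$-isomorphism $\td L \isom F$; and $\trdeg(L/k) = \trdeg(L_0/k) = d$ since every subsequent step is algebraic. The main obstacle is the positive-characteristic / imperfect-residue-field bookkeeping in the inductive step: one must be careful that the residue field extension $\td L_{i+1}/\td L_i$ has degree exactly $[F_{i+1}:F_i]$ and not a proper divisor of it (with the missing factor hidden in ramification $e$), and that in the inseparable case the lift is chosen so that no spurious transcendence or ramification is introduced. The equicharacteristic case $\mathrm{char}(k) = \mathrm{char}(\td k)$ is no easier than the mixed-characteristic case here; the argument above is uniform, relying only on the fundamental inequality and the surjectivity of $\td L_i^\circ \to \td L_i$, so I would present it without case distinctions on the characteristic of $k$.
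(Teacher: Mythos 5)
Your proposal is correct and takes essentially the same route as the paper: base field $k(x_1,\ldots,x_d)$ with the Gauss valuation (residue field $\td k(x_1,\ldots,x_d)$), then the finite part is handled by lifting minimal polynomials of successive generators, reducing a root, and squeezing $[\td L_{i+1}:\td L_i]\leq[L_{i+1}:L_i]\leq\deg \bar p$ against the degree $\deg\bar p$ realized by the reduced root. The paper packages the algebraic step by first embedding $F$ into the residue field of a valued algebraic closure of $k(x_1,\ldots,x_d)$ and invoking Gauss's Lemma, but this is the same degree-count; your hedging about separable versus purely inseparable steps is unnecessary, since the argument works uniformly for any simple extension.
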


\begin{proof}
  Let $K$ be the field of rational functions $k(x_1,\ldots,x_d)$ endowed with the Gauss absolute value.  The residue field of $K$ is equal to $\td k(x_1,\ldots,x_d)$.  Choosing a transcendence basis for $F/\td k$, we obtain a $\td k$-homomorphism $\td K\to F$.  Since $F/\td k$ is finitely generated of transcendence degree $d$, it follows that $[F:\td K]<\infty$.  Choose an algebraic closure $K^a$ of $K$ and an absolute value on $K^a$ extending the Gauss absolute value on $K$.  Noting that $\td{K^a}$ is algebraically closed, we choose a $\td K$-homomorphism $F\to\td{K^a}$.  Replacing $F$ with its image in $\td{K^a}$, we are reduced to the following statement:

  \begin{itemize}
  \item[(*)] Let $K$ be a field, let $K^a$ be an algebraic closure endowed with a non-Archimedean absolute value, and let $F/\td K$ be a finite subextension of $\td K^a/\td K$.  Then there exists a finite subextension $L/K$ of $K^a/K$ with $\td L = F$.
  \end{itemize}

  Arguing by induction on $m = [F:\td K]$, we may assume that $F = \td K(\td\beta)$ for some primitive element $\td\beta\in F$.  Let $\td f$ be the minimal polynomial of $\td\beta$ over $\td K$.  Choose a monic polynomial $f\in K^\circ[x]$ lifting $\td f$ modulo $K^{\circ\circ}$.  Let $\beta_1,\ldots,\beta_m$ be the roots of $f$ in $K^a$ (counted with multiplicity), so that $f = (x - \beta_1)\cdots(x - \beta_m)\in K^a[x]$.  By Gauss' Lemma, each $\beta_i$ is contained in $(K^a)^\circ$, so $\td f = (x -\td\beta_1)\cdots(x-\td\beta_m)$ in $\td{K^a}[x]$.  In particular, we have $\td\beta = \td\beta_i$ for some $i$.  Set $\beta =\beta_i$ and $L = K(\beta)$, so that $F\subset\td L$.  Again by Gauss' Lemma, the polynomial $f$ is irreducible over $K$, so $m = [L:K]$.  Then $m = [F:\td K]\leq[\td L:\td K]\leq [L:K] = m$, so $\td L = F$, as desired.
\end{proof}

\begin{proof}[Proof of Theorem~\ref{local lifting theorem}]
  First we reduce to the case $W=X$, which we call the global version of Theorem~\ref{local lifting theorem}. By shrinking $W$, we may assume that $W$ is a  strictly affinoid neighborhood of $x$. By Raynaud's theorem \cite[Theorem~8.4.3]{bosch14:lectures_formal_rigid_geometry} and \cite[Corollary~5.4(b)]{bosch_lutkeboh93:formal_rigid_geometry_II}, there is an admissible formal blowing up $\fX' \to \fX$ and a formal open subset $\fW' \subset \fX'$ with $\fW'_\eta = W$. We get an induced morphism $\fW_s' \to \fX_s$ of schemes of finite type over $\td k$. The fiber over $\eta$ is a scheme of finite type over the residue field $\kappa(\eta)$.  This fiber contains $\red_{\fX'}(x)$, so it is non-empty; thus it contains a closed point $\eta'$. We have 
  \[\trdeg(\kappa(\eta')/\td k)=\trdeg(\kappa(\eta')/\kappa(\eta))+\trdeg(\kappa(\eta)/\td k)=0+d=d.\]
From the global version of Theorem~\ref{local lifting theorem} as applied to the formal model $\fW'$ of $W$ and to the point $\eta' \in \fW_s'$, we deduce that there exists $z \in W$ with $\red_{\fX'}(z)=\eta'$, a neighborhood $U$ of $z$ in $W$, and an irreducible $d$-dimensional Zariski-closed subspace $Y\subset U$ containing $z$. Since $\fX'\to \fX$ maps $\red_{\fX'}(z)$ to $\red_\fX(z)$, we conclude that $\red_\fX(z)=\eta$. This proves that Theorem~\ref{local lifting theorem} follows from its global version.

We assume from now on that $W=X$. The residue field $\kappa(\eta)$ is a finitely generated field extension of $\td k$ of transcendence degree $d$.  By Lemma~\ref{lem:lift.extension.ktilde}, there is a finitely generated field extension $L/k$ of transcendence degree $d$ endowed with an absolute value extending the given one on $k$ and a $\td k$-isomorphism  $\rho \colon \td L\isom\kappa(\eta)$.  Choose an integral affine  $k$-scheme $T$ of dimension $d$ with function field $L$.   Since the points of $T^\an$ are those multiplicative seminorms on $\sO(T^\an)$ which extend the given absolute value on $k$, the absolute value on $L$ defines a point  $t \in T^\an$.  Note that $\sH(t)$ is the completion of $L$ and that $\td\sH(t) = \td L$.

\medskip
\noindent\textbf{Reductions of germs.} Let $z$ be a point of the tube $\red\inv(\eta)$.  Since $\red(z) = \eta$, there is a canonical homomorphism $\kappa(\eta) \inject \td{\sH}(z)$.  Let $\P_{\td\sH(z)/\td k}$ be the Riemann--Zariski space of the residue field extension $\td\sH(z)/\td k$.%
\footnote{For a summary about Riemann--Zariski spaces, we refer to~\cite[6.1]{chambert_ducros12:forms_courants}.}  The closure $E\subset\fX_s$ of $\eta$ is a $\td k$-variety, and there is a canonical morphism $\td k(E) = \kappa(\eta)\inject\td\sH(z)$ over $\td k$, where $\td k(E)$ is the function field of $E$. In such a situation, we call $E$ a \emph{premodel} of $\td\sH(z)$ over $\td k$.%
  \footnote{For a \emph{model}, we would require that the embedding $\td k(E)\inject\td\sH(z)$ be an isomorphism.}
  Let $\P_{\td\sH(z)/\td k}\{E\}$ be the set of valuations in $\P_{\td\sH(z)/\td k}$ with center in $E$.  This is a quasi-compact open subset of $\P_{\td\sH(z)/\td k}$ depending only on the germ $(X,z)$; it is called \emph{Temkin's reduction of the germ} and is denoted $\red(X,z)$.  
	
  The identification $\td\sH(t) = \td L$ composed with the isomorphism $\rho\colon\td L\isom\kappa(\eta)$ and the canonical inclusion $\kappa(\eta)\inject\td\sH(z)$ yields a homomorphism $\td\sH(t)\inject\td\sH(z)$.  Restriction of valuations induces a morphism of Riemann--Zariski spaces $\pi\colon\P_{\td\sH(z)/\td k}\to\P_{\td\sH(t)/\td k}$.   The isomorphism $\rho$ also induces an isomorphism $\td\sH(t)\isom\td k(E)$, so we may regard $E$ as a (pre)model of $\td\sH(t)/\td k$, which defines a quasi-compact open subset $\P_{\td\sH(t)/\td k}\{E\}\subset \P_{\td\sH(t)/\td k}$.  By construction and the fact that a valuation always extends along field extensions, we have $\pi\inv(\bP_{\td\sH(t)/\td k}\{E\}) = \P_{\td\sH(z)/\td k}\{E\}$.  Note that this is true for any $z\in\red\inv(\eta)$.

  The analytification of a scheme of finite type over $k$ is boundaryless \cite[Theorem 3.4.1]{berkovic90:analytic_geometry}, so by~\cite[Theorem~4.1]{temkin00:local_properties}, we have $\red(T,t) = \P_{\td\sH(t)/\td k}$.  Temkin has shown~\cite[Theorem~2.4]{temkin00:local_properties} that reduction of germs establishes a bijective correspondence between strictly analytic domains of the germ $(T,t)$ and non-empty quasi-compact open subsets of $\red(T,t) = \P_{\td\sH(t)/\td k}$.   Let $(T_0,t)$ be the strictly analytic domain with germ equal to $\P_{\td\sH(t)/\td k}\{E\}$.  We fix a strictly affinoid domain $T_0\subset T$ representing this germ.  The previous paragraph gives
  \begin{equation}\label{eq:pi.inv.reduction}
    \pi\inv(\red(T_0,t)) = \red(X,z).
  \end{equation}
        
  The special fiber of the $\sH(t)^\circ$-formal scheme $\fX'=\fX \hat\tensor_{k^\circ} \sH(t)^\circ$ is equal to $\fX_s \tensor_{\td k} \td L$, so the isomorphism $\rho\colon\td L\isom\kappa(\eta)$ induces a canonical $\td L$-rational point $\eta'\in\fX_s'$ lying over~$\eta$.  Since the reduction map is an anti-continuous surjective map~\cite[Proposition~2.17]{gubler_rabinoff_werner:tropical_skeletons} and $\{\eta'\}$ is closed, the tube $\Omega = \red\inv(\eta')$ is a non-empty open subset of the $\sH(t)$-analytic space $\fX'_\eta = X\hat\tensor_k\sH(t)$.  Let $p_1\colon X\times_k T_0\to X$ and $p_2\colon X\times_k T_0\to T_0$ be the projection morphisms.  We may identify $X\hat\tensor_k\sH(t)$ with the fiber~$p_2\inv(t)$.   

        \medskip\noindent\textbf{Boundarylessness.} \textit{Claim: Both $p_1$ and $p_2$ are boundaryless at any point $v\in\Omega$.}

\smallskip
In other words, we claim that $\Omega\subset\Int(X\times_k T_0/T_0)\cap\Int(X\times_k T_0/X)$.  Fix $v\in\Omega$.  By definition of the tube $\Omega$, the reduction of $v$ in $\fX'_s$ is equal to~$\eta'$, and $p_2(v) = t$.  Let $z = p_1(v)\in X$, and let $\pi_1\colon\bP_{\td\sH(v)/\td k}\to\bP_{\td\sH(z)/\td k}$ (resp.\ $\pi_2\colon\bP_{\td\sH(v)/\td k}\to\bP_{\td\sH(t)/\td k}$) be the map of Riemann--Zariski spaces corresponding to the $\td k$-algebra homomorphism $p_1^*\colon\td\sH(z)\to\td\sH(v)$ (resp.\ $p_2^*\colon\td\sH(t)\to\td\sH(v)$).  By a result of Temkin~\cite[Theorem~4.1]{temkin00:local_properties}, showing that $p_1$ and $p_2$ are boundaryless at $v$ is equivalent to proving that
\begin{equation} \label{identity of the reduction of germs}
  \pi_1^{-1}(\red(X,z))=\red(X\times_k T_0,v)=\pi_2\inv(\red(T_0,t)).
\end{equation}
By~\cite[Proposition~4.6]{temkin04:local_properties_II}, which is easily adapted to non-graded reductions of germs in the case of good strictly analytic spaces, we have 
\begin{equation} \label{eq: germs in products}
	\red(X \times_k T_0,v) = \pi_1^{-1}(\red(X,z)) \cap \pi_2^{-1}(\red(T_0,t))
\end{equation}
inside of $\bP_{\td\sH(v)/\td k}$, so~\eqref{identity of the reduction of germs} amounts to the equality $\pi_1\inv(\red(X,z))=\pi_2\inv(\red(T_0,t))$.

By functoriality of the reduction map as applied to $p_1\colon\fX\hat\tensor_{k^\circ}\sH(t)^\circ\to\fX$, we have $\red(z) = \red(p_1(v)) = p_1(\red(v)) = p_1(\eta') = \eta$.  Thus there are canonical homomorphisms $\kappa(\eta) \to\td\sH(z)$ and $\kappa(\eta')\to\td\sH(v)$ making the right square commute in the following diagram of field extensions:
\begin{equation}\label{eq:residue.field.diagram}
  \begin{tikzcd}[row sep=tiny]
    & \td\sH(v) \\
    & & \td\sH(z) \ular["p_1^*"'] \\
    & \kappa(\eta') \arrow[uu] \\
    \td\sH(t) \arrow[uuur, bend left, "p_2^*"] \arrow[ur, "\sigma"] \arrow[rr, "\rho"', "\sim"]
    & & \kappa(\eta) \arrow[uu] \arrow[ul, "\iota"']
  \end{tikzcd}
\end{equation}
Here, the map $\sigma \colon\td\sH(t) \to \kappa(\eta')$ is induced by the fact that $\eta'$ is a point of the $\td\sH(t)$-scheme $\fX_s'$, and $\iota \colon \kappa(\eta) \to \kappa(\eta')$ is induced by the fact that $\eta'$ is lying over $\eta$. 
By \eqref{eq:pi.inv.reduction}, we have that $\pi\inv(\red(T_0,t)) = \red(X,z)$.
Recall that $\pi\colon\bP_{\td\sH(z)/\td k}\to\bP_{\td\sH(t)/\td k}$ is induced by the composition of the arrow on the bottom with the arrow on the right. The map $\pi_2\colon\bP_{\td\sH(v)/\td k}\to\bP_{\td\sH(t)/\td k}$ is induced by the extension on the left, and $\pi_1\colon\bP_{\td\sH(v)/\td k}\to\bP_{\td\sH(z)/\td k}$ is induced by the extension on the top right.  If the outer part of the diagram is commutative then we have $\pi_2 = \pi\circ\pi_1$, such that \eqref{eq:pi.inv.reduction} yields $\pi_2\inv(\red(T_0,t))=\pi_1\inv(\red(X,z))$.  

The homomorphism $\kappa(\eta')\to\td\sH(v)$ comes from the reduction map $\red\colon\fX'_\eta\to\fX'_s$ associated to the formal $\td\sH(t)^\circ$-scheme $\fX'$, so it is a map of $\td\sH(t)$-algebras.  This shows that the upper left triangle is commutative.  It remains to see that the bottom triangle is commutative. Recall that $\eta'$ is the image of the map  $\Spec(\td\sH(t))\to\fX_s\tensor_{\td k}\td\sH(t)$ defined by $r\tensor 1$, where $r\colon\Spec(\td\sH(t))\to\fX_s$ is the composition of the map $(\rho\inv)^*\colon\Spec(\td\sH(t)) \isom \Spec(\kappa(\eta))$ with the canonical map $\Spec(\kappa(\eta))\to\fX_s$.  The morphism $r \otimes 1$ induces a $\td\sH(t)$-homomorphism $\varphi\colon \kappa(\eta')\to \td\sH(t)$. As $\sigma$ is induced by the structure map of $\fX_s'= \fX_s \otimes_{\td k} \td\sH(t)$, we have $\varphi \circ \sigma = 1$, so $\varphi$ and $\sigma$ are inverse isomorphisms. The definition of $r \otimes 1$ shows that the triangle 
\begin{equation} \label{eq: residue field triangle}
  \begin{tikzcd}[sep=small]
    & \kappa(\eta') \dlar["\phi"'] \\
    \td\sH(t) & & \kappa(\eta) \ular["\iota"'] \arrow[ll, "\rho\inv"]
  \end{tikzcd}
\end{equation}
commutes. We conclude that all maps in the triangle \eqref{eq: residue field triangle} are isomorphisms. The commutativity of the bottom triangle of \eqref{eq:residue.field.diagram} now follows readily from $\varphi \circ \iota = \rho^{-1}$ and from $\varphi^{-1}=\sigma$. This concludes the proof that $p_1$ and $p_2$ are boundaryless at $v$.

\medskip\noindent\textbf{Flat, quasi-finite multisections.}
Since fibers in the category of analytic spaces have the induced topology \cite[1.4]{berkovic93:etale_cohomology}, there is an open subset $V\subset X \times_k T_0$ with $V \cap p_2\inv(t) = \Omega $. Since $X$ is flat over $k$, it is clear that $X\times_k T_0$ is flat over $T_0$ (in the sense of~\cite{ducros18:families}), so $V\to T_0$ is again flat.  By a theorem of Ducros on multisections~\cite[Theorem~9.1.2(B)]{ducros18:families} (with $Y = V,\;X = T_0,\;\sF=\sO_V,\;y=$ any point of $\Omega$, and $x=t$; here we use that $p_2$ is boundaryless along $\Omega$), there is a strictly $k$-affinoid space $S$ and a commutative triangle
\[
  \begin{tikzcd}
    & V \dar \\
    S \urar["\psi"] \rar["\phi"'] & T_0
  \end{tikzcd}
\]
such that $\phi$ is a finite at a point $s\in\phi\inv(t)$.  Shrinking $S$ and $T_0$ if necessary, we may and do assume that $\phi$ is finite~\cite[Proposition~3.1.4(b)]{berkovic93:etale_cohomology}.  In particular, we have $\dim(S)\leq\dim(T_0)=d$.  Since $\psi(s)$ lies over $t\in T_0$, we have $\psi(s)\in V\cap p_2\inv(t)=\Omega$.  Set $v = \psi(s)$ and $z = p_1(v)\in X$.  As above, the reduction of $z$ is equal to $\eta$, so
\[ d_k(z) \geq \trdeg(\td\sH(z)/\td k) \geq \trdeg(\kappa(\eta)/\td k) = d, \]
where the second inequality holds because $\kappa(\eta)\subset\td\sH(z)$.  Since $z=p_1(v)=(p_1 \circ \psi)(s)$, we have $d_k(s)\geq d_k(z)\geq d$.  Since $\dim(S)=\sup_{s'\in S}d_k(s')\leq d$, it follows that $\dim(S) = d_k(s) = d$.

Composing $p_1$ with $\psi\colon S\to V\subset X\times_k T_0$ gives a map $p_1\circ\psi\colon S\to X$.  We claim that $S_z \coloneq(p_1\circ\psi)\inv(z)$ has dimension zero.  If not, then $\dim(S_z)\geq 1$, so there exists $s'\in S_z$ with $d_{\sH(z)}(s')\geq 1$.  But then
\[ d_k(s') = d_{\sH(z)}(s') + d_k(z) \geq 1 + d_k(z) \geq 1 + d \]
(the first equality is additivity in towers), which contradicts $\sup_{s'\in S}d_k(s') = \dim(S) = d$.

We claim now that $p_1\circ\psi$ is finite at $s$.  Since $s$ is isolated in its fiber,  by~\cite[Proposition~3.1.4(c)]{berkovic93:etale_cohomology} it is enough to show that $p_1\circ\psi$ is boundaryless at $s$.  Since $\phi\colon S\to T_0$ is boundaryless (it is finite) and since $\psi\colon S\to V$ factors $\phi$, it follows from~\cite[Proposition~3.1.3(ii)]{berkovic90:analytic_geometry} that $\psi$ is boundaryless.  We showed before that $p_1$ is boundaryless at $v$, and the composition of two boundaryless morphisms is again boundaryless, so $p_1\circ\psi$ is boundaryless, hence finite, at $s$.

Again by~\cite[Proposition~3.1.4(b)]{berkovic93:etale_cohomology}, there exist affinoid neighborhoods $U'$ of $s$ and $U$ of $z$ such that $p_1\circ\psi$ induces a finite morphism $U'\to U$.  The image $p_1\circ\psi(U')$ is therefore a Zariski-closed $d$-dimensional subspace $Y$ of $U$ containing $z$.  Replacing $Y$ be an irreducible component containing $z$ (see~\cite[\S 1.5]{ducros18:families}), we get the theorem.  Here we use that the dimension of a Zariski-closed subspace containing $z$ is bounded below by $d$, as noted in Remark \ref{lower bound in local lifting theorem}.
\end{proof}

Let $X$ be an analytic space and let $x\in X$.  The infimum of the integers $\dim_k\overline{\{x\}}{}^{V_{\Zar}}$ for $V$ running through the set of analytic \emph{neighborhoods} of $x$ in $X$ is called the \defi{central dimension of $x$ in $X$} and is denoted $\centdim(X,x)$~\cite[Definition~3.2.2]{ducros18:families}.  For any analytic neighborhood $V$ of $x$ in $X$ one has $\dim_k\overline{\{x\}}{}^{V_{\Zar}}\geq d_k(x)$, as noted
in Remark \ref{lower bound in local lifting theorem};  
it follows that $\centdim(X,x)\geq d_k(x)$.  

A point has central dimension zero if and only if it is a rig-point.  The following Corollary thus generalizes the fact that the rig-points are dense in an analytic space.

\begin{cor}\label{cor:cent.dim.dense}
  Let $X$ be a good, strictly $k$-analytic space of pure dimension $n$.  Then for any nonnegative integer $d\leq n$, the set
  \[ \bigl\{ x\in X~:~\centdim(X,x)=d_k(x) = d \bigr\} \]
  is dense in~$X$.
\end{cor}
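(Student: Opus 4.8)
The plan is to prove density directly: I will show that an arbitrary non-empty open subset $\Omega\subseteq X$ contains a point $z$ with $\centdim(X,z)=d_k(z)=d$. Since $X$ is good, I first replace $\Omega$ by a compact strictly $k$-affinoid domain $N\subseteq X$ which is a neighborhood of some $x_0\in\Omega$ and is contained in $\Omega$, and I let $\Omega_0$ be the topological interior of $N$ in $X$. Then $\Omega_0$ is a non-empty open subset of $X$ with $\Omega_0\subseteq N\subseteq\Omega$, and, being an open subspace of the pure $n$-dimensional space $X$, it is itself of pure dimension $n$; in particular $\dim N=n$. The whole argument reduces to the following claim: \emph{there exist a point $x\in\Omega_0$ and a formal $k^\circ$-model $\mathfrak N$ of $N$ such that the Zariski closure of $\{\red_{\mathfrak N}(x)\}$ in $\mathfrak N_s$ has dimension exactly $d$.}

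To produce $x$, I would show that points of $\Omega_0$ whose completed residue field has transcendence degree $d$ over $\td k$ are dense in $\Omega_0$. The set $D\coloneqq\{y\in\bB^n\mid\trdeg(\td\sH(y)/\td k)=d\}$ is dense in $\bB^n$: given a rig-point $a=(a_1,\dots,a_n)$ of $\bB^n$ (rig-points are dense) and a polyradius $\rho=(\rho_1,\dots,\rho_d)$ with each $\rho_i\in|\overline k^\times|$ and $\rho_i\le1$, the point of $\bB^n$ defined by the Gauss seminorm of polyradius $\rho$ in the coordinates $x_1,\dots,x_d$ after substituting $a_{d+1},\dots,a_n$ for $x_{d+1},\dots,x_n$ has completed residue field a purely transcendental extension of transcendence degree $d$ of the finite residue extension of $\td k$ attached to $a$, hence lies in $D$, and such points are dense in $\bB^n$. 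By Noether normalization~\cite[Theorem~6.1.2/1]{bosch_guntzer_remmert84:non_archimed_analysis} (using $\dim N=n$) there is a finite surjective morphism $\psi\colon N\to\bB^n$. Its flat locus is Zariski-open by~\cite[Proposition~3.2.8]{berkovic93:etale_cohomology} and non-empty: it contains the fiber $\psi\inv(\xi_n)$ over the Gauss point $\xi_n$ of $\bB^n$, which is non-empty because $\psi$ is surjective, and over which $\psi$ is flat since the local ring of $\bB^n$ at $\xi_n$ is a field by~\cite[Example~3.2.10]{ducros18:families}. Moreover $\psi\inv(\xi_n)$ meets every $n$-dimensional irreducible component of $N$ (because $\psi$ restricted to such a component is again finite and surjective onto $\bB^n$), so the non-flat locus $Z$ of $\psi$ is a Zariski-closed subset of $N$ of dimension $<n$. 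Over the complement of $\psi(Z)$ the morphism $\psi$ is finite and flat, hence open by~\cite[Proposition~3.2.7]{berkovic93:etale_cohomology}, and $\psi\inv(\psi(Z))$ is a Zariski-closed subset of $N$ of dimension $<n$, so it meets the pure $n$-dimensional set $\Omega_0$ in a subset with empty interior; combining these facts with the density of $D$ shows that $\psi\inv(D)$ is dense in $\Omega_0$. Finally, a finite morphism induces a finite extension of completed residue fields, which leaves the transcendence degree over $\td k$ unchanged, so every point of $\psi\inv(D)$ has completed residue field of transcendence degree $d$ over $\td k$. I fix $x\in\psi\inv(D)\cap\Omega_0$.

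For the model, I use the fact that, as $\mathfrak N'$ ranges over the formal $k^\circ$-models of $N$, the residue fields $\kappa(\red_{\mathfrak N'}(x))$ form a filtered system of subfields of $\td\sH(x)$ with union $\td\sH(x)$; since $\trdeg(\td\sH(x)/\td k)=d<\infty$, some model $\mathfrak N$ satisfies $\trdeg(\kappa(\red_{\mathfrak N}(x))/\td k)=d$, that is, the Zariski closure of $\{\red_{\mathfrak N}(x)\}$ in $\mathfrak N_s$ has dimension $d$. This proves the claim. Now I apply the local lifting theorem~\ref{local lifting theorem} to $N$, $\mathfrak N$, $x$ and $\eta\coloneqq\red_{\mathfrak N}(x)$, with the open neighborhood $\Omega_0$ of $x$ in $N$: this yields $z\in\Omega_0\cap\red\inv(\eta)$, a neighborhood $U$ of $z$ in $N$ contained in $\Omega_0$, and an irreducible $d$-dimensional Zariski-closed subspace $Y\subseteq U$ with $z\in Y$. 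Let $U_0$ be the topological interior of $U$ in $N$; since $\Omega_0$ is open in $X$, $U_0$ is an open neighborhood of $z$ in $X$, and $Y\cap U_0$ is a Zariski-closed subspace of $U_0$ of dimension $\le d$ containing $z$, so $\centdim(X,z)\le d$. On the other hand $\red_{\mathfrak N}(z)=\eta$ gives a canonical embedding $\kappa(\eta)\inject\td\sH(z)$ with $\trdeg(\kappa(\eta)/\td k)=d$, whence $d_k(z)\ge\trdeg(\td\sH(z)/\td k)\ge d$; together with $\centdim(X,z)\ge d_k(z)$ this forces $d\le d_k(z)\le\centdim(X,z)\le d$, so $\centdim(X,z)=d_k(z)=d$. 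Since $z\in\Omega_0\subseteq\Omega$, density is established.

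The step I expect to be the main obstacle is the assertion in the third paragraph that the residue fields of the centers of $x$ on all formal models of $N$ exhaust $\td\sH(x)$, so that a single model already realizes the residue transcendence degree $d$: this rests on the structure theory of admissible formal blow-ups and must be invoked carefully. One can instead argue concretely, by lifting a transcendence basis of $\td\sH(x)/\td k$ to analytic functions defined near $x$ and then choosing a model of $N$ on which these functions are regular, but this requires some care with sup-norm normalizations. A secondary technical point is the bookkeeping in the second paragraph, where one must ensure that $\psi\inv(D)$ actually meets the interior $\Omega_0$ and not merely the Shilov boundary of $N$.
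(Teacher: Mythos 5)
Your proof is correct in outline and rests on the same two pillars as the paper's: Theorem \ref{local lifting theorem}, applied to a point $\eta$ of a special fibre whose closure has dimension $d$, followed by the chain $\centdim(X,z)\geq d_k(z)\geq\trdeg(\kappa(\eta)/\td k)=d$ from Remark \ref{lower bound in local lifting theorem}. The routes to the input data, however, differ genuinely. The paper's proof is much shorter: it takes an arbitrary admissible formal model $\fV$ of an affinoid neighbourhood $V$ of the given point, observes that $\fV_s$ has dimension $n$, and simply chooses an irreducible $d$-dimensional closed subset $E\subset\fV_s$ with generic point $\eta$; no special point of $X$ and no special model are constructed. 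You instead first manufacture a point $x$ in the topological interior $\Omega_0$ with $\trdeg(\td\sH(x)/\td k)=d$ (Noether normalization plus density of Gauss-type points in $\bB^n$, in effect re-running the mechanism of Lemma \ref{lem:coordinate.density}), and then pass to a formal model on which the centre of $x$ has $d$-dimensional closure. This is considerably heavier, but it buys one thing: it makes explicit that the open set $\Omega_0$ handed to Theorem \ref{local lifting theorem} is a neighbourhood of a point reducing to $\eta$, which is a hypothesis of that theorem (one needs a point of the chosen neighbourhood reducing to $\eta$) and which the choice of an arbitrary $E$ does not by itself arrange.

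Two steps need repair or further justification. First, with polyradii $\rho_i\in|\bar k^\times|$ your Gauss-type points need not have residue transcendence degree $d$: if $\rho_i\notin|\sH(a)^\times|$, the value group grows instead of the residue field (for $n=d=1$, $k=\Q_p$, $a=0$, $\rho=p^{-1/2}$ one gets a type-3 point with $\td\sH(y)=\td k$). Take $\rho_i\in|k^\times|$ (or $|\sH(a)^\times|$); density survives because the valuation is nontrivial, and the rest of your second paragraph goes through. Second, the assertion that the residue fields $\kappa(\red_{\mathfrak N'}(x))$, as $\mathfrak N'$ runs over the formal models of $N$, exhaust $\td\sH(x)$ --- which you flag yourself --- is true but is not covered by the references used in the paper, so it must be proved. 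For your purposes a finite version suffices and follows from admissible formal blow-ups: write a transcendence basis of $\td\sH(x)/\td k$ as reductions of $g_i(x)/h_i(x)$ with $g_i,h_i$ in the model algebra, $|g_i(x)|\le|h_i(x)|$ and $h_i(x)\neq 0$; blow up the open ideals $(g_i,h_i,\lambda)$ for some $\lambda\in k^{\circ\circ}\setminus\{0\}$ with $|\lambda|\le|h_i(x)|$; then the centre of $x$ lies in the chart where $h_i$ generates, $g_i/h_i$ is regular there, and its reduction lands in $\kappa(\red(x))$, so $\trdeg(\kappa(\red(x))/\td k)=d$ on the resulting model. With these two repairs your argument is complete; the final bookkeeping ($\centdim(X,z)\le d$ via the interior $U_0$, and the lower bound via $\kappa(\eta)\inject\td\sH(z)$) matches the paper's.
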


\begin{proof}
  Let $x\in X$, let $V$ be a strictly affinoid neighborhood of $x$, and let $W$ be the interior of $V$ in $X$; this is an open neighborhood of $x$ in $X$.  Choose an admissible formal model $\fV$ of $V$.  The special fiber $\fV_s$ has dimension $n$.  Let $E\subset\fV_s$ be an irreducible $d$-dimensional closed subset and let $\eta$ be its generic point. We apply Theorem~\ref{local lifting theorem}  to $\fV$ and $W$ to get a neighborhood $U$ of $z$ in $W$, a point $z \in W$ with reduction $\eta$, and a $d$-dimensional Zariski-closed subspace $Y \subset U$ with $z \in Y$.  It follows that $\centdim(X,z)\leq d$, and equality holds by Remark~\ref{lower bound in local lifting theorem}.
\end{proof}

\section{Semipositive $\R$-PL functions} \label{section:characterization of harmonic functions}

Let $k$ be a non-trivially valued non-Archimedean field.  We consider a good, strictly $k$-analytic space $X$.  We refer to \artref{subsection: PL functions} for the definition of PL functions.  
Recall from the introduction that an $\R$-PL function $u\colon X \to \R$ is called \emph{semipositive} at $x \in X$ if the associated residue line bundle $L_u(x)\in\Pic(\red(X,x))_\R$ is nef, where we refer to  \cite[6.16--6.19]{gubler_rabinoff_jell:harmonic_trop} for residue line bundles. Equivalently, there is a strictly affinoid neighbourhood $U$ of $x$ such that $u=h_\fL$ for a formal $k^\circ$-model $(\mathfrak U,\fL)$ of $(U,\sO_U)$ with $\fL$ nef  \cite[Proposition 7.1]{gubler_rabinoff_jell:harmonic_trop}.

We will show first that for an $\R$-PL function on a smooth curve, semipositivity is the same as subharmonicity in the sense of Thuillier's thesis~\cite[D\'efinition 3.1.5]{thuillier05:thesis}. Recall that Thuillier defines a subharmonic function  on a smooth $k$-analytic curve $X$ as a semicontinuous function $u\colon X \to \R \cup \{-\infty\}$ which is not identically $-\infty$ on any connected component of $X$ and which has the property that for every harmonic function $h$ on a strictly affinoid domain $U$ of $X$ with $u|_{\partial U} \leq h|_{\partial U}$, we have $u|_U \leq h$.

\begin{prop}\label{prop:semipositive.ample.curves}
  Let $X$ be a smooth, strictly $k$-analytic curve and let $u\colon X\to\R$ be an $\R$-PL function.  Then $u$ is subharmonic at $x \in X$ in the sense of Thuillier if and only if $L_u(x)$ is nef.
\end{prop}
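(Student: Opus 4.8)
The plan is to reduce both sides of the stated equivalence to the sign of the curvature measure of $u$ at $x$, and then to identify that sign with the sign of the degree of the residue line bundle $L_u(x)$. First I would localise: subharmonic functions form a sheaf, and $L_u(x)$ depends only on the germ of $u$ at $x$, so both conditions are germ conditions and I may shrink $X$ at will. Recall that for the $\R$-PL (in particular continuous) function $u$, being subharmonic at $x$ in Thuillier's sense is equivalent to the current $\d'\d''[u]$ being a positive measure on a neighbourhood of $x$, and that on a smooth curve $\d'\d''[u]$ is a discrete measure whose support consists of points of type~$2$ (cf.\ \cite{thuillier05:thesis}). Shrinking $X$ so that the support of $\d'\d''[u]$ meets the neighbourhood only in $\{x\}$, the condition ``$u$ is subharmonic at $x$'' becomes ``$\d'\d''[u](\{x\})\geq 0$'', with the convention that this mass is $0$ when $x$ is not in the support.

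Next I would dispose of the cases in which both conditions are automatic. If $\td\sH(x)$ is algebraic over $\td k$ --- that is, $x$ is not of type~$2$ --- then $\d'\d''[u]$ has no mass at $x$, so $u$ is harmonic near $x$ and hence subharmonic at $x$; and $\bP_{\td\sH(x)/\td k}$, hence also its open subspace $\red(X,x)$, is a single point, so $\Pic(\red(X,x))_\R=0$ and $L_u(x)=0$ is nef. Similarly, if $x$ is of type~$2$ but $x\in\partial X$, then $\red(X,x)$ is a \emph{proper} quasi-compact open subspace of the smooth projective curve attached to $\td\sH(x)$, and with the conventions of \cite[\S6--7]{gubler_rabinoff_jell:harmonic_trop} one checks that every class in $\Pic(\red(X,x))_\R$ is nef and that every $\R$-PL function is subharmonic at $x$; so there is nothing to prove here either. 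Hence I may and do assume that $x$ is of type~$2$ and $x\notin\partial X$.

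Under these assumptions $\td\sH(x)$ is finitely generated over $\td k$ of transcendence degree $1$, and $\red(X,x)=\bP_{\td\sH(x)/\td k}$ equals the unique smooth projective $\td k$-curve $C_x$ with function field $\td\sH(x)$ (by \cite[Theorem~4.1]{temkin00:local_properties}, exactly as invoked in the proof of Theorem~\ref{local lifting theorem}). A class in $\Pic(C_x)_\R$ is nef if and only if its degree is nonnegative, so it is enough to show $\deg L_u(x)=\d'\d''[u](\{x\})$. For this I would unwind the construction of residue line bundles on curves in \cite[\S6]{gubler_rabinoff_jell:harmonic_trop} and use the standard bijection between the branches of $X$ at $x$ and the closed points of $C_x$: it identifies $L_u(x)$ with the class of an $\R$-divisor on $C_x$ whose multiplicity at a closed point $\xi$ is the outgoing slope of $u$ along the corresponding branch, which is nonzero for only finitely many $\xi$. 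Its degree is therefore the total outgoing slope of $u$ at $x$, and Thuillier's curvature formula for PL functions on curves (\cite{thuillier05:thesis}) identifies this total with $\d'\d''[u](\{x\})$. Combined with the reduction of the first paragraph this proves the theorem; by $\R$-linearity of $u\mapsto L_u(x)$ and of $u\mapsto\d'\d''[u]$ it would even suffice to treat PL functions of the form $h_\fL$ for an integral formal model $\fL$ of $\sO$.

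I expect the genuine work to lie in the last step, and in the two ``automatic'' cases above: that is, in pinning down the normalisations in \cite[\S6--7]{gubler_rabinoff_jell:harmonic_trop} so that the degree of the residue line bundle agrees, sign and all, with Thuillier's curvature measure, and so that nefness in $\Pic(\red(X,x))_\R$ is really vacuous when $x\in\partial X$. The remaining ingredients --- the localisation, the type dichotomy, the identification $\red(X,x)=C_x$ for interior type-$2$ points, and the passage to PL functions $h_\fL$ by linearity --- are formal and rest only on the theory of reductions of germs on curves and on Thuillier's potential theory.
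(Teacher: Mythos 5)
Your strategy is recognizably parallel to the paper's (both sides are reduced to the sign of a curvature-type quantity at $x$, identified with the degree of $L_u(x)$ on the residue curve), but your route is different: you avoid extension of scalars, argue case by case on the type of $x$, and want to compute $\deg L_u(x)$ directly on $C_x$ via the branch/closed-point dictionary and outgoing slopes. The paper instead first reduces to $K=(\bar k)^\wedge$ (checking that nefness of $L_u(x)$ and Thuillier subharmonicity are both insensitive to this base change, via \cite[Proposition~7.6(2)]{gubler_rabinoff_jell:harmonic_trop} and \cite[Proposition~2.3.18]{thuillier05:thesis}), takes a formal model with reduced special fiber, quotes \cite[\S 6.9]{chambert_ducros12:forms_courants} for the computation of $\d'\d''[u]$ as the discrete measure with masses $\deg_\fL(Y)$ at the points $\xi_Y$, uses Wanner's theorems together with van der Put's local algebraicity \cite{put80:class_group} to identify Thuillier subharmonicity with positivity of that current, and then passes between nefness of $L_u(x)$ and nefness of $\fL_s$ on a small model by \cite[Proposition~7.1]{gubler_rabinoff_jell:harmonic_trop}. (Your treatment of the boundary case is vacuous anyway, since ``smooth'' in this paper means rig-smooth and boundaryless.)

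The genuine gap is exactly the step you defer to ``pinning down normalisations.'' First, the equivalence ``subharmonic in Thuillier's sense $\iff$ $\d'\d''[u]\geq 0$'', with $\d'\d''$ the Chambert-Loir--Ducros current, is not in Thuillier's thesis: it is Wanner's comparison theorem \cite[Theorems~1 and~2]{wanner19:subharmonicity}, and applying it requires the local algebraicity input; if you instead mean Thuillier's own operator, you still owe the comparison of his curvature with $\deg L_u(x)$. Second, and more seriously, over a non-algebraically closed $k$ the two quantities you compare are a priori \emph{differently} weighted sums over the branches at $x$: the degree of an $\R$-divisor $\sum_\xi s_\xi[\xi]$ on $C_x$ is $\sum_\xi s_\xi\,[\kappa(\xi):\td k]$, while the curvature mass at $x$ carries its own branch multiplicities, and since the slopes $s_\xi$ may have mixed signs, ``both are positively weighted sums of the same slopes'' does not yield the sign equivalence --- the weights must genuinely agree. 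Proving this matching (the branch/closed-point correspondence plus the slope formula for model functions with the correct multiplicities over an arbitrary $k$) is the substantive content; it is precisely what the paper imports from Chambert-Loir--Ducros and Wanner \emph{after} reducing to the algebraically closed case, where the special fiber can be taken reduced and residue fields and multiplicities are tame. So either supply that slope formula over general $k$, or reinstate the base-change reduction as in the paper; the remaining parts of your outline (localization, the type dichotomy, $\red(X,x)=C_x$ at interior type-2 points) are sound.
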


\begin{proof}
  First we treat the case when $k$ is algebraically closed.  As explained in~\artref{subsection: PL functions}, there exists a compact strictly analytic neighborhood $U$ of $x$ in $X$, a formal $k^\circ$-model $\fU$ of $U$, and a formal model $\fL$ of $\sO_U$ such that $u|_U = h_\fL$.  Since $k$ is algebraically closed, we may assume that the special fiber $\fU_s$ is reduced, which implies that $\fU$ is covered by formal affines of the form $\Spf(\sA^\circ)$ for a strictly $k$-affinoid algebra $\sA$~\cite[\S 2.4]{gubler_rabinoff_jell:harmonic_trop}.  It follows from \cite[Proposition 2.4.4]{berkovic90:analytic_geometry} that for every irreducible component $Y$ of $\fU_s$, there is a unique point $\xi_Y\in U$ reducing to the generic point of $Y$.  In the formalism of real valued forms and currents on Berkovich spaces \cite{chambert_ducros12:forms_courants}, it is well known that $\d'\d''[u]|_U$ is the discrete measure on $U$ supported on the points $\xi_Y$ of $U$ for $Y$ an irreducible component of $\fU_s$, with multiplicity at $\xi_Y$ equal to $\deg_\fL(Y)$.  See~\cite[\S6.9]{chambert_ducros12:forms_courants} for a more general statement.  Wanner showed that subharmonicity of $u$ on $U$ is equivalent to the positivity of the current $\d'\d''[u]$: see~\cite[Theorems~1 and~2]{wanner19:subharmonicity} and use the fact that $X$ is locally algebraic \cite{put80:class_group}.  Thus $\fL_s$ is nef if and only if $u$ is subharmonic on $U$.
	Suppose that $u$ is subharmonic at $x$.  Shrinking $U$, we may assume that $u$ is subharmonic on $U$.    This implies that $\fL_s$ is nef, so it follows from~\cite[Proposition~7.1]{gubler_rabinoff_jell:harmonic_trop} that $L_u(x)$ is nef.  Conversely, if $L_u(x)$ is nef, then by~\cite[Proposition~7.1]{gubler_rabinoff_jell:harmonic_trop}, we may shrink $U$ so that $\fL_s$ is nef, which as above implies that $u$ is subharmonic on $U$.

	It remains to show that subharmonicity of $u$ at $x$ and numerical effectivity of $L_u(x)$ can be checked after extending scalars to the completion of an algebraic closure~$K = (\bar k)^\wedge$.  By~\cite[Proposition~7.6(2)]{gubler_rabinoff_jell:harmonic_trop}, one can check that $L_h(x)$ is nef after base change to~$K$.  The same is true for subharmonicity: this follows directly from Thuillier's definition and the fact that the base change of a harmonic function remains harmonic \cite[Proposition 2.3.18]{thuillier05:thesis}.
\end{proof}

Our next goal is a characterisation of semipositive $\R$-PL functions by restriction to subcurves as in the complex case.  We need the following characterization of boundary points:

\begin{art} \label{interior and proper closure}
	Let $\fX$ be an admissible formal $k^\circ$-scheme and let $x\in\fX_\eta$.  Then $x\in\Int(X) = X\setminus\del X$ if and only if the closure of $\red_\fX(x)$ is a proper $\td k$-scheme.  This follows from~\cite[Lemme~6.5.1]{chambert_ducros12:forms_courants} in the separated case and from~\cite[Corollary~A.4]{vilsmeier21:monge_ampere} in general.
\end{art}

\begin{thm} \label{semipositivity and curve restriction}
	Let $X$ be a good, strictly $k$-analytic space, and let $h\colon X \to \R$ be an $\R$-PL function.  For $x \in X$, the following conditions are equivalent.
	\begin{enumerate}
		\item \label{first semipos-condition}
		The function $h$ is semipositive at $x$.
		\item \label{second semipos-condition}
		There is a neighborhood $W$ of $x$ such for every strictly analytic neighborhood $U \subset W$ of $x$ and every Zariski-closed curve $C$ in $U$, the restriction $h|_C$ is semipositive.
	\end{enumerate}	
\end{thm}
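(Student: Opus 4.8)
The plan is to prove the two implications separately, treating the easy direction $(\ref{first semipos-condition})\Rightarrow(\ref{second semipos-condition})$ first. Suppose $h$ is semipositive at $x$. By the characterization recalled before the theorem, there is a strictly affinoid neighborhood $U_0$ of $x$ and a formal model $(\mathfrak U_0,\fL)$ of $(U_0,\sO_{U_0})$ with $h|_{U_0}=h_\fL$ and $\fL$ nef on $(\mathfrak U_0)_s$. Take $W$ to be the interior of $U_0$ in $X$. Given any strictly analytic neighborhood $U\subset W$ of $x$ and a Zariski-closed curve $C\subset U$, we want $h|_C$ semipositive at each of its points $y$. The key point is that for a fixed point $y\in C$, after shrinking $U$ we may assume $U$ is a strictly affinoid domain contained in $U_0$; then $C\cap U$ has a formal model obtained as the formal closure inside a formal model of $U$ dominating $\mathfrak U_0$, and the residue line bundle $L_{h|_C}(y)$ is the pullback of $L_h(y)$ under the induced map on Temkin reductions $\red(C,y)\to\red(X,y)$ (functoriality of residue line bundles, cf.\ \cite[6.16--6.19]{gubler_rabinoff_jell:harmonic_trop}). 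Since nefness is preserved under pullback, $L_{h|_C}(y)$ is nef, so $h|_C$ is semipositive.

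For the hard direction $(\ref{second semipos-condition})\Rightarrow(\ref{first semipos-condition})$, suppose $h$ is \emph{not} semipositive at $x$; we will produce, in every neighborhood $W$ of $x$, a strictly analytic neighborhood $U\subset W$ and a Zariski-closed curve $C\subset U$ through a point $y$ at which $h|_C$ fails to be semipositive. First shrink so that $h|_U=h_\fL$ for a strictly affinoid $U$ with formal model $(\mathfrak U,\fL)$, $U\subset W$. The failure of nefness of $L_h(x)$ in $\Pic(\red(X,x))_\R$ means, via the description of $\red(X,x)=\bP_{\td\sH(x)/\td k}\{E\}$ and the correspondence between closed points of that Riemann--Zariski space and models, that there is a curve in the special fiber of some model of $U$ dominating $\mathfrak U$ on which $\fL$ has negative degree. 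Concretely, after a further admissible formal blow-up $\mathfrak U'\to\mathfrak U$ we obtain an irreducible closed curve $W'\subset\mathfrak U'_s$ that is \emph{proper} over $\td k$ (so that it ``sees'' a point of $X$ in the interior, by \artref{interior and proper closure}), passes through the reduction data associated to $x$, and satisfies $\deg_{\fL'}(W')<0$, where $\fL'$ is the pullback of $\fL$.

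Now apply the local lifting theorem~\ref{local lifting theorem} to $\mathfrak U'$ and the generic point $\eta$ of $W'$ (with $d=1$): this yields a point $z$ reducing to $\eta$, a neighborhood $U''$ of $z$ contained in $W'_\eta = W\cap(\text{a neighborhood})$, and an irreducible $1$-dimensional Zariski-closed subspace $C=Y\subset U''$ containing $z$. Because $W'$ is proper over $\td k$ we have $z\in\Int(X)$, and one checks that the formal closure of $C$ in $\mathfrak U'$ has $W'$ as an irreducible component of its special fiber. It follows that $L_{h|_C}(z)$ is computed by the pullback of $\fL'$ to a model of $C$ whose special fiber contains $W'$ as a component; since $\deg_{\fL'}(W')<0$, the line bundle $L_{h|_C}(z)$ is not nef, so $h|_C$ is not semipositive, giving the desired contradiction.

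I expect the main obstacle to be the passage from the abstract failure of nefness of $L_h(x)$ in $\Pic(\red(X,x))_\R$ to a \emph{concrete} curve $W'$ in the special fiber of an explicit formal model that (i) is proper over $\td k$, so that the lifted curve $C$ meets the interior and the residue-line-bundle formalism applies, and (ii) still has negative $\fL$-degree. This requires care with the dictionary between Temkin reductions, admissible blow-ups, and nef line bundles on special fibers, and with the fact that nef is checked on complete curves; one must ensure the blow-up producing $W'$ does not destroy properness or alter the degree of the relevant $1$-cycle. The compatibility of residue line bundles with restriction to the lifted curve, and the identification of $W'$ as a component of the special fiber of the formal closure of $C$, is then a formal consequence of the lifting theorem together with \artref{sec:noether-normalization}-type functoriality, but it should be spelled out explicitly.
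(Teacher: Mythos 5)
Your overall strategy matches the paper's: reduce semipositivity at $x$ to a degree inequality $\deg_\fL(Y)\geq 0$ on proper curves $Y$ in the special fiber of a formal model, and verify this by lifting $Y$ to a Zariski-closed curve $C$ in a (small) analytic domain via the local lifting theorem~\artref{local lifting theorem}, then invoke hypothesis \eqref{second semipos-condition} on $C$. The contrapositive packaging is a harmless variant. Where your plan goes wrong is in the sentence ``one checks that the formal closure of $C$ in $\mathfrak U'$ has $W'$ as an irreducible component of its special fiber.'' That is the conclusion of the \emph{affine} lifting theorem~\artref{thm:affine lifting}, which applies when $C$ is Zariski-closed in all of $X$. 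The local lifting theorem only produces $C$ Zariski-closed in a \emph{smaller} analytic domain $U\subsetneq W$; the formal closure of $C$ therefore lives in a formal model $\fU$ of $U$, not in $\fW$ (or your $\mathfrak U'$), and its special fiber contains the closure $Y'$ of $\red_{\fU}(z)$, which is a priori \emph{not} the original curve $Y=W'$. One gets a morphism $\pi\colon\fU\to\fW$ extending the inclusion $U\inject W$, and functoriality of reduction shows only that $\pi(Y')=Y$; using \artref{interior and proper closure} twice one sees $Y'$ is proper, hence $Y'\to Y$ is finite and surjective. The degree comparison then requires the \emph{projection formula}: from $\deg_{\pi^*\fL}(Y')\geq 0$ (supplied by \cite[Lemma~7.4]{gubler_rabinoff_jell:harmonic_trop} and hypothesis \eqref{second semipos-condition}) one deduces $\deg_\fL(Y)\geq 0$. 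Your plan omits this step entirely, and it is the genuine crux.

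Also note that the ``main obstacle'' you anticipate -- extracting a concrete proper curve of negative degree from abstract non-nefness of $L_h(x)$ -- is not an obstacle at all: the characterization of semipositivity at $x$ in \cite[Proposition~7.1]{gubler_rabinoff_jell:harmonic_trop} already supplies a formal model $(\fW,\fL)$ with $h=h_\fL$, and nefness of $\fL$ on $\fW_s$ is by definition tested against proper curves in $\fW_s$, so the ``concrete curve $Y$'' is given for free. The part you thought would be routine (identifying the formal closure's special fiber and comparing degrees) is precisely where the argument needs care.
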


\begin{proof}  
	Since semipositivity is an open property that is stable under pullback \cite[Proposition~7.6]{gubler_rabinoff_jell:harmonic_trop}, it is clear that \eqref{first semipos-condition} yields \eqref{second semipos-condition}.

	Assume  that $h$ satisfies \eqref{second semipos-condition}. By shrinking $W$, we may assume that $W$ is a strictly affinoid neighborhood of $x$ and that there is a formal model $\fW$ of $U$ and a model $\fL$ of $\sO_W$ on $\fW$ with coefficients in $\R$ such that $h=h_\fL$ on $W$.  Let $Y\subset\fW_s$ be a  curve which is proper over the residue field $\td k$. To prove semipositivity at $x$, it is enough to show that $\deg_\fL(Y)\geq 0$. 
	
	Choose any point $y\in W$ reducing to the generic point $\eta$ of $Y$.  By Theorem~\ref{local lifting theorem} as applied to $X = W$ and $\fX=\fW$, there exist $z \in W$ with $\red_{\fW}(z) = \eta$, a neighborhood $U$ of $z$ in $W$, and an irreducible $1$-dimensional Zariski-closed subspace $C$ of $U$ containing $z$. By shrinking $U$, we may assume that $U$ is strictly affinoid. By results of Bosch and L\"utkebohmert~\cite[Theorem~8.4.3]{bosch14:lectures_formal_rigid_geometry}~\cite[Corollary~5.4(b)]{bosch_lutkeboh93:formal_rigid_geometry_II}, there is a formal model $\fU$ of $U$ and a morphism $\pi\colon\fU\to\fW$ extending the inclusion $U\inject W$ on generic fibers.  Let $\fC$ be the formal closure of $C$ in $\fU$.  This is an admissible formal model of $C$ whose special fiber $\fC_s$ is a closed $1$-dimensional subscheme of $\fU_s$ containing $\eta'=\red_{\fU}(z)$. Functoriality of reductions shows that $\pi(\eta')=\eta$, so the closure $Y'$ of $\eta'$ is a curve mapping to $Y$. Since the closure of $\eta$ in $\fW_s$ is the proper curve $Y$, we conclude from~\artref{interior and proper closure} that $z \in W \setminus \partial W$.  The boundary $\partial U$ is the union of $\partial W \cap U$ with the topological boundary of $U$ in $W$~\cite[Proposition~3.1.3]{berkovic90:analytic_geometry}, so since $U$ is a neighborhood of $z$ in $W$, we have $z \in U\setminus \partial U$. Applying~\artref{interior and proper closure} to $U$ and its formal model $\fU$ shows that   $Y'$ is proper as well. Therefore, the map $Y'\to Y$ is finite and surjective. Note that $Y'$ is an irreducible component of $\fC_s$. Applying \eqref{second semipos-condition} to $U$ and $C$, we have that $h|_C$ is semipositive, and so we deduce from~\cite[Lemma~7.4]{gubler_rabinoff_jell:harmonic_trop} as applied to $(\pi^*\fL)|_\fC$ that $\deg_{\pi^*\fL}(Y')\geq 0$. The projection formula then shows 
	$\deg_\fL(Y)\geq 0$, as required.
\end{proof}

To apply results from Thuillier's thesis \cite{thuillier05:thesis}, we would like to use only smooth analytic curves. To give a variant of the above result suitable for this purpose, we introduce the following notations:

For a real function $h$ on $X$ and an analytic field extension $F$ of $k$, we denote by $h_F$ the induced function on the base change $X_F = X \hat{\otimes}_k F$ of $X$. We call an $\R$-PL function $h$ \defi{pluriharmonic} if $\pm h$  are both semipositive. A strictly $k$-analytic space $X$ is called \defi{rig-smooth} if the sheaf of K\"ahler differentials, considered on the $\G$-topology, is locally free of rank equal to the local dimension. The space $X$ is \defi{smooth} if and only if $X$ is rig-smooth and boundaryless. See \cite{ducros18:families} for details (Ducros calls rig-smooth spaces quasi-smooth).

\begin{cor} \label{restriction to smooth curves}
	Let $X$ be a good, strictly $k$-analytic space and let  $h\colon X \to \R$ be an $\R$-PL function. Then the following conditions are equivalent:
	\begin{enumerate}
		\item \label{first condition smooth curve}
		The function $h$ is semipositive (resp.~pluriharmonic).
		\item \label{second condition smooth curve}
                  There is an algebraically closed analytic field extension $F$ of $k$ such that for every rig-smooth analytic curve $C$ over $F$ and every morphism $\varphi\colon C \to X_F$, the function $h_F \circ \varphi$ is semipositive (resp.~harmonic) on $C$.
		\item \label{third condition smooth curve}
                  There is an algebraically closed analytic field extension $F$ of $k$ such that for every smooth analytic curve $C$ over $F$ and every morphism $\varphi\colon C \to X_F$, the function $h_F \circ \varphi$ is semipositive (resp.~harmonic) on $C$.
	\end{enumerate}	
\end{cor}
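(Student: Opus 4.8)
The plan is to prove the cyclic implications \eqref{first condition smooth curve} $\Rightarrow$ \eqref{second condition smooth curve} $\Rightarrow$ \eqref{third condition smooth curve} $\Rightarrow$ \eqref{first condition smooth curve}. It is enough to treat the ``semipositive'' part: the ``pluriharmonic/harmonic'' part follows by applying the semipositive statements to $h$ and to $-h$, using Proposition~\ref{prop:semipositive.ample.curves} to identify pluriharmonicity with Thuillier-harmonicity on a smooth curve. For \eqref{first condition smooth curve} $\Rightarrow$ \eqref{second condition smooth curve} I would take $F$ to be the completion of an algebraic closure of $k$: if $h$ is semipositive then so is $h_F$ on $X_F$, and hence $h_F\circ\varphi$ is semipositive for every morphism $\varphi\colon C\to X_F$, since semipositivity of $\R$-PL functions is preserved under base change and under pullback by \cite[Proposition~7.6]{gubler_rabinoff_jell:harmonic_trop}; this applies in particular to rig-smooth $C$. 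The implication \eqref{second condition smooth curve} $\Rightarrow$ \eqref{third condition smooth curve} is immediate, as smooth curves are rig-smooth, so the same $F$ works.

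The substance lies in \eqref{third condition smooth curve} $\Rightarrow$ \eqref{first condition smooth curve}; let $F$ be the algebraically closed analytic extension given by \eqref{third condition smooth curve}. By Theorem~\ref{semipositivity and curve restriction} it suffices to show that $h|_{C'}$ is semipositive for every Zariski-closed curve $C'$ in every strictly affinoid domain $U$ of $X$ (this yields condition~\eqref{second semipos-condition} of Theorem~\ref{semipositivity and curve restriction} at every point, because semipositivity is a local condition). Fix such a $C'$; by base change along $F/k$ and \cite[Proposition~7.6]{gubler_rabinoff_jell:harmonic_trop}, we are reduced to proving that $h_F|_{C'_F}$ is semipositive on the compact strictly $F$-analytic curve $C'_F=C'\hat{\otimes}_k F$, which we may assume reduced. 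Let $\nu\colon\widetilde{C'_F}\to C'_F$ be the normalization; it is finite (see~\cite{ducros18:families}), and $\widetilde{C'_F}$ is normal, hence --- $F$ being perfect --- rig-smooth, so that its relative interior $\Int(\widetilde{C'_F})$ is a \emph{smooth} $F$-analytic curve. Using \cite[Proposition~5.8]{gubler_rabinoff_jell:harmonic_trop} I would choose finitely many compact strictly analytic neighborhoods $V_1,\dots,V_m$ of points of $C'_F$ with $C'_F=\bigcup_j\Int(V_j)$, together with formal models $\mathfrak V_j$ of $V_j$ and models $\fL_j$ of $\sO_{V_j}$ with real coefficients such that $h_F|_{V_j}=h_{\fL_j}$. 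By \cite[Proposition~7.1]{gubler_rabinoff_jell:harmonic_trop} it is then enough to prove, for each $j$, that $(\fL_j)_s$ is nef, i.e.\ that $\deg_{\fL_j}(Y)\geq0$ for every component $Y$ of $(\mathfrak V_j)_s$ that is proper over $\td k$.

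To obtain this, extend the finite morphism $\nu^{-1}(V_j)\to V_j$ to a finite morphism of formal models $\tilde\nu_j\colon\widetilde{\mathfrak V_j}\to\mathfrak V_j$ (finite morphisms of affinoid spaces admit finite formal models over a common base; cf.~\cite{bosch14:lectures_formal_rigid_geometry}). Then $(\tilde\nu_j)_s$ is finite and surjective, so some component $\tilde Y$ of $(\widetilde{\mathfrak V_j})_s$ maps onto $Y$ and is proper over $\td k$. Choose $\tilde z\in\nu^{-1}(V_j)$ reducing to the generic point $\eta_{\tilde Y}$ of $\tilde Y$; by \artref{interior and proper closure}, $\tilde z$ lies in the relative interior of $\nu^{-1}(V_j)$, hence in $\Int(\widetilde{C'_F})$. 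Applying \eqref{third condition smooth curve} to the smooth curve $\Int(\widetilde{C'_F})$ and the composite morphism $\Int(\widetilde{C'_F})\hookrightarrow\widetilde{C'_F}\xrightarrow{\nu}C'_F\to X_F$ gives that $\nu^{*}(h_F|_{C'_F})$ is semipositive on $\Int(\widetilde{C'_F})$, in particular at $\tilde z$. Since $\nu^{*}(h_F|_{C'_F})=h_{\tilde\nu_j^{*}\fL_j}$ on $\nu^{-1}(V_j)$ and $\tilde z$ reduces to the generic point of the proper curve $\tilde Y$, \cite[Lemma~7.4]{gubler_rabinoff_jell:harmonic_trop} yields $\deg_{\tilde\nu_j^{*}\fL_j}(\tilde Y)\geq0$; and the projection formula for the finite surjective map $\tilde Y\to Y$ gives $\deg_{\tilde\nu_j^{*}\fL_j}(\tilde Y)=[\td k(\tilde Y):\td k(Y)]\cdot\deg_{\fL_j}(Y)$, whence $\deg_{\fL_j}(Y)\geq0$, as required.

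I expect the one real obstacle to be the gap between ``smooth'' and ``rig-smooth'': the curves produced by the lifting theorem and by normalization are compact, hence have nonempty relative boundary, and \eqref{third condition smooth curve} cannot be applied to them directly. Passing to the relative interior of the normalization produces a smooth curve, and the key point --- what makes the smooth-curve hypothesis \eqref{third condition smooth curve} sufficient --- is that, by \artref{interior and proper closure}, the boundary points one discards reduce to non-proper curves in the special fiber and therefore contribute nothing to the nefness of $(\fL_j)_s$. A subsidiary point is the base-change bookkeeping: one needs that semipositivity of an $\R$-PL function on a curve can be tested after base change to the algebraically closed field $F$ of \eqref{third condition smooth curve}, which I would take from \cite[Proposition~7.6]{gubler_rabinoff_jell:harmonic_trop}, supplemented for the harmonic variant by the base-change invariance of Thuillier-harmonicity already used in the proof of Proposition~\ref{prop:semipositive.ample.curves}.
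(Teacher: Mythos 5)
Your proposal is correct, and the easy implications and the overall skeleton of the hard one (reduce to semipositivity, base change to the algebraically closed field, invoke Theorem~\ref{semipositivity and curve restriction}, pass to the reduced curve and its normalization, use perfectness of $F$ to get rig-smoothness) coincide with the paper's. Where you genuinely diverge is in how the hypothesis on curves is fed back into semipositivity of $h|_{C'}$. The paper first observes that \eqref{second condition smooth curve} and \eqref{third condition smooth curve} are equivalent, because at a boundary point $y$ of an analytic curve the reduction $\red(C,y)$ is non-proper by \artref{interior and proper closure}, so semipositivity there is vacuous; it then proves \eqref{second condition smooth curve}$\Rightarrow$\eqref{first condition smooth curve} by applying the hypothesis to the (possibly boundaried) rig-smooth normalization and quoting the descent of semipositivity along finite surjective morphisms, \cite[Proposition~7.6(1')]{gubler_rabinoff_jell:harmonic_trop}, to come back down to $C'$. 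You instead prove \eqref{third condition smooth curve}$\Rightarrow$\eqref{first condition smooth curve} directly: you restrict to the relative interior of the normalization to stay within the smooth-curve hypothesis (the same boundary observation, used in a different place), and then you redo the descent by hand --- local formal models $(\fV_j,\fL_j)$, a finite morphism of formal models over $\nu^{-1}(V_j)\to V_j$, a point over the generic point of a proper component, \cite[Lemma~7.4]{gubler_rabinoff_jell:harmonic_trop}, and the projection formula --- which in effect re-proves the needed special case of Proposition~7.6(1') and duplicates part of the argument of Theorem~\ref{semipositivity and curve restriction}. Your route buys a more self-contained treatment of the smooth-versus-rig-smooth gap at the cost of extra bookkeeping; the paper's route is shorter because the finite-surjective descent is already packaged in the cited proposition. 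Two small points to tighten: the assertion that a finite morphism of strictly affinoid spaces extends to a finite morphism of admissible formal models over a prescribed model of the target deserves a precise justification (e.g.\ adjoin power-bounded $\sA$-algebra generators of $\sB$, which are integral over the model algebra by \cite[Theorem~6.3.5/1]{bosch_guntzer_remmert84:non_archimed_analysis}), and in the reduction step your $V_j$ should be taken strictly affinoid so that \cite[Proposition~7.1]{gubler_rabinoff_jell:harmonic_trop} applies as stated; neither affects the correctness of the argument.
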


If \eqref{second condition smooth curve} or \eqref{third condition smooth curve} is true for an algebraically closed analytic field extension $F$ of $k$, then it is true for all analytic field extensions. This follows from  Corollary \ref{restriction to smooth curves} and Proposition \cite[Proposition 7.6(2')]{gubler_rabinoff_jell:harmonic_trop} which allows to check semipositivity over any field extension.

\begin{proof}
  It is enough to prove the claim for semipositive functions, as $h$ is pluriharmonic if and only if $h$ and $-h$ are semipositive. 
  Since semipositivity is stable under pullback and extension of scalars \cite[Proposition~7.6]{gubler_rabinoff_jell:harmonic_trop}, clearly \eqref{first condition smooth curve} yields \eqref{second condition smooth curve}.	
	Since a smooth analytic curve is just a boundaryless rig-smooth curve, it is clear that \eqref{second condition smooth curve} yields \eqref{third condition smooth curve}. In fact, these two conditions are equivalent as an $\R$-PL function is automatically semipositive in every boundary point $y$ of an analytic curve $C$. Indeed, it follows from \artref{interior and proper closure}  
	that $\red(C,y)$ is non-proper and hence there is no condition for semipositivity at $y$.
	
	It remains to show that that \eqref{second condition smooth curve} yields \eqref{first condition smooth curve}.	Assume that \eqref{second condition smooth curve} holds. By base change using \cite[Proposition 7.6(2')]{gubler_rabinoff_jell:harmonic_trop}, we may assume that $k=F$.
	Let $x \in X$ and let $U$ be a  strictly analytic neighborhood of $x$. For any Zariski-closed curve $Y$ in $U$, we will show that $h|_Y$ is semipositive. Replacing $Y$ by $Y_{\rm red}$ and using \cite[Proposition~7.6(3')]{gubler_rabinoff_jell:harmonic_trop}, we may assume that $Y$ is reduced. Let $\varphi \colon C \to Y$ be the normalization of $Y$ (see \cite{berkovic90:analytic_geometry} before Proposition 3.1.8).  Then $C$ is a rig-smooth curve because $k$ is perfect (it is algebraically closed). By \eqref{second condition smooth curve}, the function $h \circ \varphi$ is semipositive  on $C$. Since the normalization morphism is finite and surjective, it follows from \cite[Proposition~7.6(1')]{gubler_rabinoff_jell:harmonic_trop} that $h|_Y$ is semipositive. By Theorem \ref{semipositivity and curve restriction}, we conclude that $h$ is semipositive  at $x$. Since $x$ was an arbitrary point of $X$, we get \eqref{first condition smooth curve}.	
\end{proof}

\begin{cor} \label{maximum of semipositive}
  Let $X$ be a good, strictly $k$-analytic space and let $\Lambda$ be either $\Z$ or $\Q$. Let $h_1,h_2 \colon X \to \R$ be semipositive $\Lambda$-PL functions. Then $\max(h_1,h_2)$ is a semipositive $\Lambda$-PL function.
\end{cor}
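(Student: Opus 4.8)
The plan is to first verify that $\max(h_1,h_2)$ is $\Lambda$-PL, and then to deduce its semipositivity by restricting to smooth analytic curves via Corollary~\ref{restriction to smooth curves} and invoking the corresponding stability of subharmonic functions from Thuillier's thesis. For the first point I would fix $x\in X$ and a strictly affinoid neighborhood $V$ of $x$ on which $h_i=\sum_j\lambda_{ij}\log|f_{ij}|$ for finitely many $\lambda_{ij}\in\Lambda$ and invertible analytic functions $f_{ij}$ on $V$. Clearing denominators yields an integer $N\ge 1$ and invertible analytic functions $F,G$ on $V$ with $N(h_1-h_2)=\log|F|-\log|G|$, so the Weierstrass domains $V_1=\{|G|\le|F|\}$ and $V_2=\{|F|\le|G|\}$ form a covering of $V$ by strictly affinoid domains, and $h\coloneqq\max(h_1,h_2)$ agrees with $h_1$ on $V_1$ and with $h_2$ on $V_2$. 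Hence $h$ is $\G$-locally a $\Lambda$-linear combination of logarithms of invertible functions, i.e.\ $h$ is $\Lambda$-PL. (Alternatively one may simply invoke the lattice property of $\Lambda$-PL functions.)

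To prove that $h$ is semipositive, I would fix an algebraically closed analytic field extension $F$ of $k$, for instance $F=\widehat{\overline k}$. By Corollary~\ref{restriction to smooth curves} it suffices to show that $h_F\circ\varphi$ is semipositive on $C$ for every smooth analytic curve $C$ over $F$ and every morphism $\varphi\colon C\to X_F$. Now $h_F\circ\varphi$ is an $\R$-PL function on $C$, being the pullback of the $\Lambda$-PL function $h$, and since forming the maximum commutes with base change and with composition we have $h_F\circ\varphi=\max\bigl((h_1)_F\circ\varphi,(h_2)_F\circ\varphi\bigr)$. As $h_1$ and $h_2$ are semipositive, the implication \eqref{first condition smooth curve}$\Rightarrow$\eqref{third condition smooth curve} of Corollary~\ref{restriction to smooth curves}---valid over every algebraically closed extension, since semipositivity is stable under pullback and base change---shows that $(h_1)_F\circ\varphi$ and $(h_2)_F\circ\varphi$ are semipositive on $C$, hence subharmonic on $C$ in the sense of Thuillier by Proposition~\ref{prop:semipositive.ample.curves}.

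The last step uses that the maximum of two subharmonic functions is subharmonic, which is immediate from Thuillier's definition: if $g$ is harmonic on a strictly affinoid domain $U\subset C$ with $\max(u_1,u_2)\le g$ on $\partial U$, then $u_i\le g$ on $\partial U$, hence $u_i\le g$ on $U$ for $i=1,2$, hence $\max(u_1,u_2)\le g$ on $U$; the semicontinuity and nondegeneracy conditions hold trivially since all functions in sight are real-valued and continuous. (One may also quote the corresponding statement from \cite{thuillier05:thesis}.) Applying this with $u_i=(h_i)_F\circ\varphi$ shows that $h_F\circ\varphi$ is a subharmonic $\R$-PL function on $C$, hence semipositive on $C$ by Proposition~\ref{prop:semipositive.ample.curves} once more. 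Since $C$ and $\varphi$ were arbitrary, Corollary~\ref{restriction to smooth curves} then gives that $h=\max(h_1,h_2)$ is semipositive, which together with the first paragraph completes the argument.

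I do not expect a genuine obstacle, as the substance is already encoded in Corollary~\ref{restriction to smooth curves} (hence in the local lifting theorem) and in Proposition~\ref{prop:semipositive.ample.curves}, with the curve-level statement about subharmonic functions being entirely elementary. The step most likely to require care is the first one---checking that $\max(h_1,h_2)$ is again $\Lambda$-PL and, relatedly, that base change and pullback of functions commute with the maximum---though even this reduces to the short Weierstrass-domain argument indicated above.
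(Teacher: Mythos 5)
Your proposal is correct and follows essentially the same route as the paper: reduce via Corollary~\ref{restriction to smooth curves} to the case of a smooth analytic curve, translate semipositivity there into Thuillier subharmonicity via Proposition~\ref{prop:semipositive.ample.curves}, and use that the maximum of two subharmonic functions is subharmonic. You spell out two details the paper delegates to citations — the Weierstrass-domain argument that $\max(h_1,h_2)$ is again $\Lambda$-PL (the paper cites \cite[Remark~5.2]{gubler_rabinoff_jell:harmonic_trop}) and the elementary proof that the maximum of two subharmonic functions is subharmonic (the paper cites \cite[Proposition~3.1.8]{thuillier05:thesis}) — but the structure is identical.
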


Corollary~\ref{maximum of semipositive} fails for $\Lambda$ strictly containing $\Q$, but only because we restrict ourselves to the $\G$-topology generated by \emph{strictly} analytic domains.  See~\cite[Example~ 5.3]{gubler_rabinoff_jell:harmonic_trop}.

\begin{proof}
	We first note that $\max(h_1,h_2)$ is a $\Lambda$-PL function~\cite[Remark~5.2]{gubler_rabinoff_jell:harmonic_trop}. 
	Using Corollary \ref{restriction to smooth curves}, it is enough to prove that $\max(h_1,h_2)$ is semipositive 
	in case of a smooth analytic curve $X$.
	Since the maximum of two subharmonic functions on such a curve is subharmonic \cite[Proposition~3.1.8]{thuillier05:thesis}, the claim follows from Proposition \ref{prop:semipositive.ample.curves}.
\end{proof}

\begin{thm} \label{pointwise convergence of semipositive PL}
  Let $X$ be a good, strictly $k$-analytic space and let $(h_j)_{j \in \N}$ be a sequence of semipositive $\R$-PL functions on $X$ converging pointwise to an $\R$-PL function $h$. Then $h$ is semipositive. 
\end{thm}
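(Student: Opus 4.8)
The plan is to reduce the statement to the case of smooth analytic curves, where it becomes a classical fact of potential theory in the sense of Thuillier.

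\emph{Reduction to curves.} By Corollary~\ref{restriction to smooth curves} it suffices to exhibit one algebraically closed analytic extension $F/k$ such that for every smooth analytic curve $C$ over $F$ and every morphism $\varphi\colon C\to X_F$ the function $h_F\circ\varphi$ is semipositive on $C$; I would take $F=\widehat{\bar k}$, and fix such a $C$ and $\varphi$. Put $u\coloneqq h_F\circ\varphi$ and $u_j\coloneqq h_{j,F}\circ\varphi$. Since each $h_j$ is semipositive, Corollary~\ref{restriction to smooth curves} applied to $h_j$, together with the remark that the curve condition holds over all analytic extensions once it holds over one algebraically closed one, shows that every $u_j$ is semipositive on $C$. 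Moreover $u$ is again an $\R$-PL function on $C$, because $\R$-PL functions are stable under base change and pullback (immediate from the local description $\sum_i\lambda_i\log|f_i|$ with the $f_i$ invertible); and since the formation of $h_F\circ\varphi$ is computed pointwise via the image point in $X$ through isometric residue field extensions, the value of $h_F\circ\varphi$ at $c\in C$ equals the value of $h$ at the image of $c$ in $X$, and likewise for each $h_j$, so $u_j\to u$ pointwise on $C$. Thus it remains to prove: \emph{on a smooth analytic curve, a pointwise limit of semipositive $\R$-PL functions which is itself $\R$-PL is semipositive.}

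\emph{The curve case.} On a smooth curve $C$, Proposition~\ref{prop:semipositive.ample.curves} identifies semipositivity of an $\R$-PL function with subharmonicity in the sense of Thuillier, equivalently with positivity of the current $\d'\d''[u]$. So let $(u_j)$ be subharmonic $\R$-PL functions on $C$ with $u_j\to u$ pointwise, $u$ being $\R$-PL; we must show $u$ is subharmonic, which is a local assertion. Fix $x\in C$ and a compact strictly affinoid neighborhood $V$ of $x$, whose Shilov boundary is a finite set of points. By the maximum principle for subharmonic functions \cite{thuillier05:thesis}, $\sup_V u_j$ is attained on this finite set, and since $(u_j(p))_j$ converges for each such $p$, the sequence $(u_j)$ is uniformly bounded above on $V$. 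I would then invoke the Brelot--Cartan regularization available in Thuillier's potential theory: for each $m$ the upper semicontinuous regularization $w_m\coloneqq(\sup_{j\geq m}u_j)^{*}$ is subharmonic on the interior of $V$, the $w_m$ decrease, and $w_m\geq\sup_{j\geq m}u_j\geq u$, so $w\coloneqq\lim_m w_m$ is subharmonic on the interior of $V$ with $w\geq u$. On the other hand $w_m$ coincides with $\sup_{j\geq m}u_j$ off a polar set and $\sup_{j\geq m}u_j\searrow\limsup_j u_j=u$, so, a countable union of polar sets being polar, $\{w>u\}$ is polar and hence has zero harmonic measure. Comparing $w$ with the harmonic extension of its boundary values on shrinking neighborhoods of $x$, the sub-mean-value inequality together with the continuity of $u$ forces $w(x)\leq u(x)$, hence $w(x)=u(x)$. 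Therefore $u$ is subharmonic at $x$; as $x$ was arbitrary, $u$ is subharmonic, and the proof is complete.

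\emph{Main obstacle.} The crux is the curve case, which rests on having the full potential-theoretic apparatus on Berkovich curves — the maximum principle, the theory of polar sets, and the Perron/Brelot--Cartan regularization of upper envelopes — at hand in Thuillier's framework, and on securing the \emph{uniform upper bound} for the sequence on compact pieces; here the finiteness of Shilov boundaries is precisely what makes the maximum principle effective. (A more hands-on alternative would restrict everything to a skeleton and argue combinatorially, but controlling the slopes of the $u_j$ in the directions pointing off the skeleton, where subharmonicity forces them only to be nonpositive rather than zero, makes that route more delicate, so the potential-theoretic argument seems preferable.)
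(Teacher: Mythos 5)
Your reduction to smooth curves over $F=\widehat{\bar k}$ is exactly the paper's first step (Corollary~\ref{restriction to smooth curves}, plus the observations that semipositivity is stable under pullback/base change and that pointwise convergence is preserved), and that part is fine. The gap is in the curve case, which is where all the difficulty of the theorem lives: pointwise convergence gives no control on slopes/Laplacians, and you discharge this difficulty onto a Brelot--Cartan package --- subharmonicity of the usc regularization $w_m=(\sup_{j\ge m}u_j)^*$, the statement that $w_m$ differs from $\sup_{j\ge m}u_j$ only on a polar set, stability of polar sets under countable unions, and negligibility of polar sets for harmonic measure --- none of which is established in Thuillier's thesis in the generality you need (you yourself flag this as the crux). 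Moreover there is a specifically non-Archimedean wrinkle you do not address: on a Berkovich curve the harmonic measure $\mu_{x,V}$ of a strictly affinoid neighbourhood is a \emph{finite sum of point masses} at the type~2/3 boundary points, so ``polar sets have zero harmonic measure'' is only usable if you know the exceptional set contains no type~2/3 points (equivalently, that such points are non-polar for whatever capacity underlies your Brelot--Cartan statement). This is known on the Berkovich projective line via Baker--Rumely, but for general smooth strictly $k$-analytic curves it is additional infrastructure that would have to be built or located, so as written the crucial step is asserted rather than proved.

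For comparison, the paper's curve argument avoids fine potential theory altogether. Using van der Put's local algebraicity, it embeds the situation in $C^{\an}$ for a proper algebraic curve $C$, extends $h$ and the $h_j$ to $\R$-PL functions on $C^{\an}$, and views them as model functions, so that semipositivity at a divisorial point $x$ amounts to nonnegativity of the multiplicity $m$ of the Chambert-Loir measure $c_1(\sO_C,\td h)$ at $x$. Choosing a $\Q$-PL bump function $\varphi$ with $\varphi(x)=1$, $0\le\varphi\le1$, supported near $x$ inside $X$, symmetry of local heights (integration by parts) gives $m=\int\varphi\,c_1(\sO_C,\td h)=\int\td h\,c_1(\sO_C,\varphi)$, and since $c_1(\sO_C,\varphi)$ is a \emph{finite discrete} measure supported in $X$, pointwise convergence of $h_j\to h$ at those finitely many points is literally all that is needed to pass to the limit and conclude $m=\lim_j\int\varphi\,c_1(\sO_C,\td h_j)\ge0$. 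If you want to salvage your route, you would need to supply (with proofs or precise references valid for general smooth Berkovich curves) the regularization theorem, the polarity of the exceptional set, and the fact that type~2/3 points are not polar; alternatively, adopt the paper's duality trick, which converts the slope problem into finitely many point evaluations.
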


For $\Q$-PL functions on the analytification of a proper algebraic variety over $K$, this was shown in \cite[Theorem 1.3]{gubler_martin19:zhangs_metrics}. The weaker statement assuming uniform convergence was proven in \cite[Proposition 7.2]{gubler_kuenneman19:positivity}. 

\begin{proof}
	We can check semipositivity after extending scalars~\cite[Proposition~7.6(2')]{gubler_rabinoff_jell:harmonic_trop}, so we may assume that $k$ is algebraically closed. By Corollary \ref{restriction to smooth curves}, we may assume that $X$ is a smooth analytic curve.  
	Fix a point $x\in X$; we must show that $h$ is semipositive at $x$.
	Since $X$ is locally algebraic by a result of van der Put \cite{put80:class_group}, we may assume that $X$ is an open subset of $C^\an$ for a proper algebraic curve $C$ over $k$. 
	Replacing $X$ by the interior of a compact neighborhood of $x$, we may assume that   all $h_j$ and $h$ extend to $\R$-PL functions $\td h_j$ and $\td h$ on $C^\an$  (\cite[Proposition 5.11]{gubler_kuenneman19:positivity}, \cite[Proposition 2.7]{gubler_martin19:zhangs_metrics}). An $\R$-PL function $\td h$ on $C^\an$ is the same as a metric on $\mathscr O_{C^\an}$ associated to a 
	model $\fL$ with $\R$-coefficients 
	\cite[Proposition 2.10]{gubler_martin19:zhangs_metrics}; we denote the associated formal metric by $\|\phantom{a}\|_{\td h}$. This allows us to use the theory of local heights from \cite[\S 3]{gubler07:tropical_varieties}. Passing to a dominant model, we may assume that $\fL$ is a line bundle with $\R$-coefficients on a formal model $\fC$ of $C$ with reduced special fiber, as explained in~\cite[\S 2.4]{gubler_rabinoff_jell:harmonic_trop}.
	Then, by definition,  the Chambert-Loir measure $c_1(\mathscr O_C,\td h)$ associated to 
	$\td h$ (and $\fL$) is the discrete measure on $C^\an$ supported on the points $z\in C^\an$ reducing to generic points of $\fC_s$; we call such points \defi{divisorial points}. The multiplicity at a divisorial point $z$ is $\deg_\fL(Z)$, where $Z$ is the closure of $\red_\fC(z)$ in $\fC_s$.  These measures were introduced by Chambert-Loir in~\cite{chambertloir06:mesures_equidist}; for the current setting with formal metrics, we refer to~\cite[\S 3]{gubler07:tropical_varieties}. 
	
	We want to prove semipositivity of $h$ on the open subset $X$ of $C^\an$.  It is equivalent to show that the Chambert--Loir measure  $c_1(\mathscr O_C,\td h)$ has positive multiplicity in every divisorial point $x \in X$ associated to $\fC$:  see \cite[Lemma~6.8]{gubler_kuenneman19:positivity} or the proof of Proposition \ref{prop:semipositive.ample.curves}.

	Suppose now that $x$ is a divisorial point in $X$ associated to $\fC$.  By \cite[Lemma~2.13]{gubler_martin19:zhangs_metrics}, there is a $\Q$-PL function $\varphi$ on $C^\an$ with $\varphi(x)=1$ and with support contained in an open neighborhood $W$ of $x$ contained in $X$ and not containing any other divisorial points $z \neq x$.  Using that $\Q$-PL functions are stable under minimum and maximum \cite[Proposition 2.12]{gubler_martin19:zhangs_metrics}, we may assume $0 \leq \varphi \leq 1$. 

	Let $m$ be the multiplicity of $c_1(\sO_C, \td h)$ at $x$.   We have
	\[m = \int_{C^\an} \varphi \,c_1(\sO_C, \td h) \]
	by the choice of $\varphi$. The right hand side is the local height of $C$ with respect to two metrized line bundles $(\sO_C,\|\phantom{a}\|_{\phi})$ and $(\mathscr O_C,\|\phantom{a}\|_{\td h})$ using the canonical section $1$ (and the corresponding trivial Cartier divisor) for the line bundles \cite[3.8]{gubler07:tropical_varieties}. Here the metric $\|\phantom{a}\|_{\phi}$ is determined by $-\log \|1\|_\varphi = \varphi$, and similarly for $\td h$. Now symmetry of the local heights \cite[Proposition 3.9]{gubler07:tropical_varieties} based on commutativity of intersection numbers on formal models \cite[Theorem 5.9]{gubler98:local_heights_subvariet} gives the integration by parts formula
	\[ \int_{C^\an} \varphi \,c_1(\mathscr O_C, \td h)= \int_{C^\an} \td h \,c_1(\mathscr O_C, \varphi). \]
	In \cite[\S6]{chambert_ducros12:forms_courants}, a Monge--Amp\`ere measure was introduced for PL-metrics based on a local analytic construction. It was shown in \cite[\S6.9]{chambert_ducros12:forms_courants} and in \cite[Theorem 10.5]{gubler_kunneman:tropical_arakelov} that this Monge--Amp\`ere measure agrees with the corresponding Chambert-Loir measure and hence the latter is local in the Berkovich topology.
	We conclude  that $c_1(\mathscr O_C, \varphi)$ is a finite discrete measure on $C^\an$ with support contained in $W \subset X$, so we get
	\[ m=\int_{W}   \lim_{j \to \infty} h_j\,c_1(\mathscr O_C, \varphi)=\lim_{j \to \infty}\int_{W}   h_j\,c_1(\mathscr O_C, \varphi). \]
	Now we do the same steps backwards with $h_j$ instead of $h$ to deduce
	$$m=\lim_{j \to \infty}\int_{C^\an}   \td h_j\,c_1(\mathscr O_C, \varphi)=\lim_{j \to \infty}\int_{C^\an}   \varphi\,c_1(\sO_C, \td h_j).$$
	Since $\varphi \geq 0$ with support in $X$ and since $c_1(\sO_C,h_j)$ restricts to a \emph{positive} discrete measure $c_1(\sO_X,  h_j)$ on $X$ as $h_j$ is semipositive on $X$, we obtain
	$$m=\lim_{k \to \infty}\int_{X}   \varphi\,c_1(\sO_X,  h_j) \geq 0,$$
	proving the claim.
\end{proof}

\section{The Bieri--Groves theorem for affinoid spaces} \label{section: Bieri--Groves theorem}

The Bieri--Groves theorem \cite{bieri_groves84:geometry_set_character_induced_valuation} shows that for every closed subscheme of pure dimension $d$ in a multiplicative torus $\bT$ of rank $n$ over $k$, the tropicalization is a union of $d$-dimensional $(\Z,\Gamma)$-polyhedra in $\R^n$. This was generalized to closed analytic subvarieties of polytopal subdomains of $\bT$ in \cite{gubler07:tropical_varieties}. Berkovich and Ducros generalized the Bieri--Groves theorem to arbitrary analytic spaces, but the tropicalization then can be of lower dimension. We give here the pure dimensionality of the tropical variety of an affinoid space away from the tropicalization of the boundary. The arguments rely crucially on results of Ducros on the tropicalization of the germ of an analytic space and on tropical skeletons.

Let $k$ be a non-trivially valued non-Archimedean field with valuation $v$ and value group $\Gamma \subset \R$. 
In this section, we consider a compact strictly analytic space $X$ over $k$.  Note that the strictness assumption on $X$ and the non-triviality of $v$ can be always achieved by base change, so this is not really a restriction of generality as tropical varieties are invariant under base change. 

\begin{art} \label{tropicalization map}
	Consider the split torus $\bT=\mathbb G_{\rm m}^n=\Spec k[\chi_1^\pm,\dots, \chi_n^\pm]$ of rank $n$ over $k$ for which we have the \emph{tropicalization map}
	$$\trop\colon \bT^\an \longrightarrow \R^n \, , \quad x \mapsto (|\chi_1(x)|, \dots, |\chi_n(x)|).$$
	The classical Bieri--Groves theorem \cite{bieri_groves84:geometry_set_character_induced_valuation} shows that for any closed subvariety $Y$ of $\bT$ of dimension $d$, the \emph{tropical variety} $\Trop(Y)\coloneqq \trop(Y^\an)$ is a finite union of $(\Z,\Gamma)$-polyhedra of dimension $d$.
\end{art}

\begin{art} \label{smooth tropicalization}
	A morphism $\varphi\colon X \to \bT^\an$ 
	to the split torus $\bT=\mathbb G_{\rm m}^n$ with character lattice $M=\Z^n$ is called a \emph{moment map}. 
	Then 
	$$\varphi_{\trop}\coloneqq \trop \circ \varphi\colon X \longrightarrow \R^n$$
	is called a \emph{smooth tropicalization}. Such maps are at the core in the theory of Chambert--Loir and Ducros \cite{chambert_ducros12:forms_courants} of real valued $(p,q)$-forms on $X$. Results of Berkovich and Ducros show that the tropical variety $\varphi_{\rm trop}(X)$ is a finite union of $(\Z,\Gamma)$-polyhedra of dimension at most $d$: see \cite[Th\'eor\`eme 3.2]{ducros12:squelettes_modeles}. Ducros shows in \textit{loc.\ cit.} that  $\varphi_{\trop}(\partial X)$ is contained in a finite union of $(\Z,\Gamma)$-polytopes in $\R^n$ of dimension at most $d-1$.
\end{art}

\begin{art} \label{graded residue field}
	For $x \in X$, we have the completed residue field $\sH(x)$. We denote by $\td\sH(x)$ the residue field of $\sH(x)$ and by $\td\sH(x)^\bullet$ the graded residue field of $\sH(x)$, in the sense of~\cite{ducros12:squelettes_modeles}. We have a canonical homomorphism $\chi_x\colon \sO_{X,x} \to \sH(x)$; we let $\td\chi_x^\bullet$ be the composition of $\chi_x$ with $\sH(x)\to \td\sH(x)^\bullet$. Let $\td M(x) = \{\td\chi_x^\bullet(\phi^*u)\mid u\in M\}$ and let $\td K^\bullet(\td M(x))$ be the graded subfield of the graded residue field $\td\sH(x)^\bullet$ generated over $\td K^\bullet$ by $\td M(x)$. We set as in \cite[3.6]{gubler_rabinoff_jell:harmonic_trop}
	\[ d_\phi(X,x) \coloneq \trdeg\bigl(\td K^\bullet(\td M(x))/\td K^\bullet\bigr). \]
\end{art}

A basic tool for the study of  the tropical variety $\varphi_{\trop}(X)$ is the following local result of Ducros.
\begin{thm}[{\cite[Th\'eor\`eme 3.4]{ducros12:squelettes_modeles}}] \label{Ducros results about tropical germs}
	For $x \in X$,  there is a compact strictly analytic neighbourhood $V(x)$ of $x$ such that for all compact strictly analytic neighbourhoods $W \subset V(x)$ of $x$, the germs of the tropical varieties $\varphi_{\rm trop}(W)$ and $\varphi_{\rm trop}(V(x))$ agree at $\omega = \varphi_{\rm trop}(x)$.  We denote this germ at $\omega$ by $\varphi_{\rm trop}(X,x)$. The following properties hold:
	\begin{enumerate}
        \item \label{Ducros1} The germ $\phi_{\trop}(X,x)$ is a rational fan of dimension at most $d_\phi(X,x)$.
		\item \label{Ducros2}  If $x \not\in \partial X$, then $\phi_{\trop}(X,x)$ has pure dimension $d_\varphi(X,x)$. 
	\end{enumerate}
\end{thm}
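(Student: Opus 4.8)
The plan is to reduce to the case that $X$ is strictly affinoid, to produce a local skeleton of $X$ near $x$ through which $\varphi_{\rm trop}$ factors in a piecewise-affine way, and then to read off the stabilization of the germ and its dimension from that skeleton together with Ducros's graded reduction of the germ $(X,x)$. Since the assertion is local at $x$ and the germ at $\omega$ is unchanged upon passing to a smaller compact strictly analytic neighbourhood, we may assume $X = \sM(\sA)$ is strictly affinoid; then, by the Berkovich--Ducros form of the Bieri--Groves theorem recalled in \artref{smooth tropicalization}, every $\varphi_{\rm trop}(W)$ is a finite union of $(\Z,\Gamma)$-polytopes, and $\varphi_{\rm trop}(W')\subset\varphi_{\rm trop}(W)$ whenever $W'\subset W$. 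Ducros's skeleton theory yields a compact strictly analytic neighbourhood $V(x)$ of $x$ and a closed \emph{local skeleton} $S \subset V(x)$ with $x\in S$, such that near $x$ the set $S$ is a $(\Z,\Gamma)$-polyhedral complex, $\varphi_{\rm trop}|_S$ is affine on each cell of the germ of $S$ at $x$, and $\varphi_{\rm trop}(W)$ has the same germ at $\omega$ as $\varphi_{\rm trop}(S\cap W)$ for all compact strictly analytic $W$ with $x\in W\subset V(x)$. As the germ of $S$ at $x$ is a fixed polyhedral germ and $\varphi_{\rm trop}|_S$ is piecewise affine on it, the image $\varphi_{\rm trop}(S\cap W)$ has a germ at $\omega$ that does not depend on such $W$; this is $\varphi_{\rm trop}(X,x)$, and being the germ of a $(\Z,\Gamma)$-polyhedral complex at one of its points it is a rational fan.

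For the dimension bound one computes the rank of $\varphi_{\rm trop}$ along the skeleton near $x$: on a cell $\tau$ of the germ of $S$ at $x$, the differential of the affine map $\varphi_{\rm trop}|_\tau$ records the monomial exponents of the pulled-back torus coordinates $\varphi^*\chi_1,\dots,\varphi^*\chi_n$ along $\tau$, and its rank is therefore bounded, via the Abhyankar inequality relating the rational rank of the value group at a point to the transcendence degree of the corresponding (graded) residue field, by $\trdeg\bigl(\td K^\bullet(\td M(x))/\td K^\bullet\bigr) = d_\phi(X,x)$. Since $\varphi_{\rm trop}(X,x)$ is the union of the images of the finitely many cells $\tau$ under maps of rank $\le d_\phi(X,x)$, it has dimension at most $d_\phi(X,x)$, which is the first assertion.

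It remains to treat the boundaryless case $x\notin\partial X$, which is the main obstacle. One uses two facts about interior points: first, the germ at $x$ of a local skeleton of a pure-dimensional neighbourhood is itself pure-dimensional of the local dimension of $X$ at $x$ --- in the language of formal models this reflects the properness over $\td k$ of the closure of $\red(x)$, so that no lower-dimensional face of the skeleton ``escapes to the boundary''; second, along the top-dimensional faces of the germ of $S$ at $x$ the rank of $\varphi_{\rm trop}$ is constant equal to $d_\phi(X,x)$, so that every such face maps onto a cone of dimension exactly $d_\phi(X,x)$. Together with the first assertion this gives that $\varphi_{\rm trop}(X,x)$ is pure of dimension $d_\phi(X,x)$. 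Making both facts precise --- in particular the interplay between Temkin's reduction of germs, Ducros's graded reductions and skeletons, and the interior condition --- is the technical core, and is carried out in \cite[\S 3]{ducros12:squelettes_modeles}.
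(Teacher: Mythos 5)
This theorem is not proved in the paper at all: it is quoted, with attribution, from \cite[Th\'eor\`eme 3.4]{ducros12:squelettes_modeles}, so there is no internal argument to compare yours against. Judged on its own terms, your sketch is a reasonable description of the \emph{strategy} behind Ducros's result (local skeleta, piecewise-affineness of $\varphi_{\rm trop}$ on them, graded reductions and an Abhyankar-type bound for the dimension estimate), but it is not a proof. The decisive ingredients are asserted rather than established: the existence of a compact neighbourhood $V(x)$ together with a local skeleton $S$ such that, for \emph{every} small compact strictly analytic $W\ni x$, the germ of $\varphi_{\rm trop}(W)$ at $\omega$ coincides with that of $\varphi_{\rm trop}(S\cap W)$ (this is precisely what forces the germs to stabilize and is the heart of the matter — the trivial inclusion $\varphi_{\rm trop}(W')\subset\varphi_{\rm trop}(W)$ does not give it); the piecewise-affine structure of $\varphi_{\rm trop}|_S$ near $x$; and, for part \eqref{Ducros2}, your two ``facts about interior points.'' In particular the claim that the rank of $\varphi_{\rm trop}$ along every top-dimensional face of the germ of $S$ equals $d_\varphi(X,x)$ is not something one can simply posit; in Ducros's treatment the pure-dimensionality at points of $\Int(X)$ comes out of a delicate analysis with graded reductions of germs, not from a rank-constancy statement. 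Since you explicitly delegate ``the technical core'' to \cite[\S 3]{ducros12:squelettes_modeles}, which is the very source of the theorem, the attempt is circular as a blind proof: what remains outside the citation is only routine bookkeeping (reduction to the affinoid case, germs of polyhedra being cones, the easy dimension inequality), while everything that makes the statement true is left unproven.
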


Next, we will recall the tropical skeleton of $\varphi$ from \cite[Section 3]{gubler_rabinoff_jell:harmonic_trop}.

\begin{art} \label{tropical skeleton}
	Assume that $X$ has pure dimension $d$. The \emph{tropical skeleton of $\varphi$} is defined by $S_\varphi(X) \coloneqq \{x \in X \mid d_\varphi(X,x)=d\}$. By results of Ducros \cite{ducros12:squelettes_modeles}, the tropical skeleton is a closed subset of $X$ consisting only of Abhyankar points and has a canonical piecewise linear structure such that $\varphi_{\trop}$ restricts to a   surjective piecewise linear map  $S_\varphi(X) \to \varphi_{\rm trop}(X)$  with finite fibers. 
	We refer to \cite[Section 3]{gubler_rabinoff_jell:harmonic_trop} for details.  
	For $x \in \Int(X)=X \setminus \partial X$, it follows from Theorem \ref{Ducros results about tropical germs} that $x \in S_\varphi(X)$ if and only if $\dim(\varphi_{\rm trop}(X,x))=d$.
\end{art}

We are now ready to state our global pure dimensionality result for affinoid spaces.	

\begin{thm} \label{affinoid Bieri-Groves theorem}
	Let $X=\sM(\sA)$ be a strictly affinoid space over $k$ of pure dimension $d$ and let $\varphi_{\rm trop}\colon X \to \R^n$ be a smooth tropicalization map. Then the tropical variety $\varphi_{\rm trop}(X)$ has pure dimension $d$ at every point of $\varphi_{\rm trop}(X)\setminus \varphi_{\rm trop}(\del X)$.
\end{thm}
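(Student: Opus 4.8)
The plan is to reduce the global pure-dimensionality statement to the local germ statement of Ducros recorded in Theorem~\ref{Ducros results about tropical germs}\eqref{Ducros2}.  Fix a point $\omega \in \varphi_{\rm trop}(X)\setminus\varphi_{\rm trop}(\partial X)$.  Since $X$ is compact, the fiber $\varphi_{\rm trop}\inv(\omega)$ is a nonempty compact subset of $X$, and by hypothesis it is disjoint from $\partial X$, so every point $x$ in this fiber lies in $\Int(X)$.  The first step is to describe the germ $\bigl(\varphi_{\rm trop}(X),\omega\bigr)$ in terms of the germs $\varphi_{\rm trop}(X,x)$: I would show that, after replacing $X$ by a small compact strictly analytic neighborhood of the fiber $\varphi_{\rm trop}\inv(\omega)$, the germ of $\varphi_{\rm trop}(X)$ at $\omega$ is the union $\bigcup_{x}\varphi_{\rm trop}(X,x)$ over the finitely many points $x$ of $S_\varphi(X)\cap\varphi_{\rm trop}\inv(\omega)$.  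This uses that $\varphi_{\rm trop}$ restricts to a finite piecewise linear surjection $S_\varphi(X)\to\varphi_{\rm trop}(X)$ from the tropical skeleton (see~\artref{tropical skeleton}), so that only points of the skeleton contribute to the top-dimensional part of the image near $\omega$, together with the local finiteness of the germs $V(x)$ from Theorem~\ref{Ducros results about tropical germs}.

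\textbf{Main steps.}  Granting this local description, the argument proceeds as follows.  First I would establish that $\varphi_{\rm trop}(X)$ has dimension exactly $d$ in a neighborhood of $\omega$: the upper bound $\dim\le d$ is the Berkovich--Ducros statement recalled in~\artref{smooth tropicalization}, and for the lower bound I would invoke the affine lifting theorem~\ref{thm:affine lifting} (or rather the existence of Abhyankar points of maximal dimension, as in Corollary~\ref{cor:cent.dim.dense}) to produce a point $x\in X$ near $\varphi_{\rm trop}\inv(\omega)$ with $d_\varphi(X,x)=d$, i.e.\ a point of the tropical skeleton $S_\varphi(X)$; here one needs that such a point can be chosen mapping arbitrarily close to $\omega$, which follows because the skeleton surjects onto $\varphi_{\rm trop}(X)$.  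Next, for each of the finitely many points $x\in S_\varphi(X)\cap\varphi_{\rm trop}\inv(\omega)$, we have $x\notin\partial X$, so Theorem~\ref{Ducros results about tropical germs}\eqref{Ducros2} applies and tells us that the germ $\varphi_{\rm trop}(X,x)$ is a rational fan of \emph{pure} dimension $d_\varphi(X,x)$; by~\artref{tropical skeleton}, since $x\in S_\varphi(X)\cap\Int(X)$, this common dimension is exactly $d$.  Finally, a finite union of fans each of pure dimension $d$, all sharing the cone point $\omega$, is again of pure dimension $d$ at $\omega$, since every maximal cone of the union is a maximal cone of one of the pieces.  This gives pure dimension $d$ of $\varphi_{\rm trop}(X)$ at $\omega$, as desired.

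\textbf{The main obstacle.}  The technical heart is the first step: identifying the germ of $\varphi_{\rm trop}(X)$ at $\omega$ with the union of the germs $\varphi_{\rm trop}(X,x)$ over points $x$ in the fiber, and verifying that only skeleton points contribute to the full-dimensional part.  The subtlety is that a priori the fiber $\varphi_{\rm trop}\inv(\omega)$ is an infinite compact set, and one must use compactness to pass to a common neighborhood on which the germ stabilizes (applying Theorem~\ref{Ducros results about tropical germs} to extract a finite subcover of neighborhoods $V(x)$), and then argue that points $x$ with $d_\varphi(X,x)<d$ contribute germs of dimension $<d$ that are absorbed into the closure of the contributions from skeleton points.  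The latter uses the structure theory of the tropical skeleton from~\cite{gubler_rabinoff_jell:harmonic_trop} and~\cite{ducros12:squelettes_modeles}: the piecewise linear surjection $S_\varphi(X)\to\varphi_{\rm trop}(X)$ with finite fibers implies that $\varphi_{\rm trop}(X)$ near $\omega$ is covered by the images of finitely many polyhedra coming from $S_\varphi(X)$, and pure dimensionality of each such contribution at $x\in\Int(X)$ is exactly Ducros' local result.  Handling the interaction with the boundary carefully — ensuring that shrinking $X$ to isolate the fiber does not introduce new boundary points into the relevant tubes — is where most of the care is needed, and this is presumably why the hypothesis is phrased as avoiding $\varphi_{\rm trop}(\partial X)$ rather than merely $\partial X$.
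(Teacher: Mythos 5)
Your proposal correctly isolates the crux: one must produce a point of the tropical skeleton $S_\varphi(X)$ lying in the fiber $\varphi_{\rm trop}\inv(\omega)$, after which Theorem~\ref{Ducros results about tropical germs}\eqref{Ducros2} gives a $d$-dimensional contribution to the germ and pure dimensionality follows on the open set $\varphi_{\rm trop}(X)\setminus\varphi_{\rm trop}(\partial X)$ by an easy polyhedral argument. (You do not actually need the more elaborate germ decomposition you describe in ``step 1'': once every point of this open set carries a $d$-dimensional germ, lower-dimensional faces of $\varphi_{\rm trop}(X)$ meeting the set are forced to be faces of $d$-dimensional ones.)

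However, the step in which you produce such a point has a genuine gap. You invoke Theorem~\ref{thm:affine lifting} or Corollary~\ref{cor:cent.dim.dense} ``to produce a point $x$ near $\varphi_{\rm trop}\inv(\omega)$ with $d_\varphi(X,x)=d$''. Those results produce an Abhyankar point, i.e.\ a point with $d_k(x)=d$, but this is \emph{not} the same as $d_\varphi(X,x)=d$. The invariant $d_\varphi(X,x)$ of~\artref{graded residue field} is $\trdeg\bigl(\td K^\bullet(\td M(x))/\td K^\bullet\bigr)$, the transcendence degree of the graded subfield of $\td\sH(x)^\bullet$ generated by the residues of $\varphi^*\chi_j$ only; this is $\leq d_k(x)$, and can be strictly smaller even for Abhyankar points. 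That is precisely why the tropical skeleton is a proper subset of the set of Abhyankar points; a generic Abhyankar point does not lie on $S_\varphi(X)$. Your fallback, that ``such a point can be chosen mapping arbitrarily close to $\omega$, which follows because the skeleton surjects onto $\varphi_{\rm trop}(X)$'', quietly uses the surjectivity of $S_\varphi(X)\to\varphi_{\rm trop}(X)$ recalled in~\artref{tropical skeleton}; but if you grant that surjectivity as a black box you get a skeleton point in the fiber immediately and the lifting theorem plays no role, whereas the point of the paper's argument is to \emph{construct} such a point in a self-contained way for affinoid $X$.

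The paper's actual route is quite different. After base-changing to make $\omega$ a $\Gamma$-rational point, one chooses a generic homomorphism of tori $q\colon\bT\to\bT'=\mathbb G_{\rm m}^d$ so that the induced projection $Q_\R\colon\R^n\to\R^d$ is injective on every face of $\varphi_{\rm trop}(X)$ and $\omega'=Q_\R(\omega)\notin\psi_{\rm trop}(\partial X)$, where $\psi=q^\an\circ\varphi$. The key point is that because $X_{\omega'}=\psi_{\rm trop}\inv(\omega')$ avoids $\partial X$, it is boundaryless (hence finite) over the polytopal affinoid $U_{\omega'}=\trop\inv(\omega')\subset\bT'^\an$, and $X_\omega=\varphi_{\rm trop}\inv(\omega)$ is open and closed in $X_{\omega'}$. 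Since $U_{\omega'}$ has a unique Shilov boundary point (the Gauss point of weight $\omega'$), any Shilov boundary point $x$ of $X_\omega$ maps to it under reduction, and a transcendence-degree computation gives $d_\psi(X,x)=d$, hence $x\in S_\psi(X)\subset S_\varphi(X)\cap\varphi_{\rm trop}\inv(\omega)$. This is the self-contained construction that your proposal lacks.
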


Note that one cannot drop the requirement that $X$ be affinoid: indeed, if $X$ is proper then it is compact and has no boundary, but $\phi_{\trop}(X)$ could have dimension zero.

\begin{proof}
	Let $\omega\in\phi_{\trop}(X)\setminus\phi_{\trop}(\del X)$.  If there exists a point $x$ of $S_\phi(X)$ contained in $\phi_{\trop}\inv(\omega)$, then \artref{tropical skeleton} shows $\dim\phi_{\trop}(X,x) = d$ since $x\in\Int(X)$, which implies that the fan of $\phi_{\trop}(X)$ at $\omega$ has dimension $d$; if this is true for all points in the open subset $\phi_{\trop}(X)\setminus\phi_{\trop}(\del X)$ of $\phi_{\trop}(X)$, then it follows that $\phi_{\trop}(X)$ has pure dimension $d$ at all such points.  Therefore it is enough  to show that $S_\phi(X)\cap\phi_{\trop}\inv(\omega)\neq\emptyset$.

	Fix $\omega\in\phi_{\trop}(X)\setminus\phi_{\trop}(\del X)$.  Since the tropical skeleton of a base change with respect to an analytic field extension of $k$ maps onto the original tropical skeleton \cite[Proposition 3.12]{gubler_rabinoff_jell:harmonic_trop}, we may assume that $\omega$ is $\Gamma$-rational. 
	Let $q\colon \bT \to \bT'$ be a homomorphism of tori. Such a homomorphism induces a homomorphism $Q\colon N \to N'$ between the cocharacter lattices $N,N'$ of $\bT$ and $\bT'$, respectively, and vice versa. We take for $\bT$ the split torus of rank $n$ given by the target of the moment map $\varphi\colon X \to \bT^\an$ and for $\bT'=\mathbb G_{\rm m}^d$ a split torus of rank $d=\dim(X)$. Then $Q$ induces a linear map $Q_\R\colon \R^n \to \R^d$. Since $\dim\varphi_{\rm trop}(X) \leq d$, we may choose $Q$ generically, meaning that $Q_\R$ is injective on every face of $\varphi_{\rm trop}(X)$. 
	Since $\varphi_{\rm trop}(\del X)$ is contained in a polyhedral complex of dimension strictly less than $d$, we may additionally assume  that $\omega' = Q_\R(\omega)$ is not contained in $\psi_{\trop}(\del X)$, where $\psi \coloneqq q^\an \circ \varphi\colon X \to \bT'^\an$.  Let $U_{\omega'} = \trop\inv(\omega')\subset\bT'{}^\an$ and let $X_{\omega'} = \psi_{\trop}\inv(\omega') = \psi\inv(U_{\omega'})$. Since $\omega$ is $\Gamma$-rational,  these are strictly affinoid domains in $\bT'^\an$ and $X$, respectively.  We have $\Int(X) = \Int(X/\bT'{}^\an)$ by~\cite[Proposition~3.1.3]{berkovic90:analytic_geometry} since $\del\bT'{}^\an = \emptyset$, and also $\Int(X_{\omega'}/U_{\omega'})\supset\Int(X/\bT'{}^\an)\cap X_{\omega'}$ by \textit{loc.\ cit.}, so $\del(X_{\omega'}/U_{\omega'}) = \emptyset$ since $X_{\omega'}\subset\Int(X)$.  Since $X_{\omega'}$ is compact and separated, it follows that $X_{\omega'}\to U_{\omega'}$ is a proper morphism of strictly affinoid spaces. It follows from the direct image theorem that the morphism $\varphi\colon X_{\omega'}\to U_{\omega'}$ is finite.
	
	Using that $Q_\R$ is injective on every face of $\varphi_{\rm trop}(X)$, it is clear that $Q_\R\inv(\omega')\cap\phi_{\trop}(X)$ is a finite set. It follows that $X_\omega = \phi_{\trop}\inv(\omega)$ is open and closed in the strictly affinoid space $X_{\omega'} = \psi_{\trop}\inv(\omega') = \phi_{\trop}\inv(Q_\R\inv(\omega'))$, so $\psi\colon X_\omega\to U_{\omega'}$ is finite as well.
	Note that the affinoid domain $U_{\omega'}$ has a unique Shilov boundary point $\xi'$ \cite[Corollary 4.5]{gubler07:tropical_varieties}. 
	The Shilov boundary points of $X$ correspond bijectively to the generic points of the canonical reduction $\td X =\Spec(\td\sA)$ of $X$ and the correspondence is given by the reduction map \cite[Proposition 2.4.4]{berkovic90:analytic_geometry}. By functoriality of the reduction map and since the reduction $\td\psi\colon \widetilde{X_{\omega}}\to \widetilde{U_{\omega'}}$ is a finite morphism of affine schemes of pure dimension $d$ \cite[Theorem 6.3.4/2]{bosch_guntzer_remmert84:non_archimed_analysis}, we conclude that every Shilov boundary point $x$ of $X_\omega$ maps  to $\xi'$. Since $\xi'$ is the Gauss point of $\bT'^\an$ of weight $\omega'$, we  deduce that 
	$\trdeg(\td K^\bullet(\td M'(\xi'))/\td K^\bullet)=d$ for the character lattice $M'$ of $\bT'$. Using that $x$ is finite over $\xi'$, we get 
	$$d_\psi(X,x)=d_\psi(X_\omega,x)=
	\trdeg(\td K^\bullet(\td M'(x))/ \td K^\bullet(\td M'(\xi')))+\trdeg(\td K^\bullet(\td M'(\xi'))/\td K^\bullet)=d.$$ 
	We conclude that $x \in S_\psi(X)\cap \varphi_{\rm trop}^{-1}(\omega)$. Since $\td M'(x) \subset \td M(x)$, we have  $S_\psi(X) \subset S_\phi(X)$. Then $x \in S_\phi(X)\cap \varphi_{\rm trop}^{-1}(\omega)$, proving the claim.
\end{proof}

\section{The maximum principle} \label{section: maximum principle}

In this section, we prove the maximum principle for semipositive $\R$-PL functions, and more generally, for piecewise smooth psh functions. We start with a tropical version. We refer to
\artref{subsection: polyhedral geometry} for the notions used from polyhedral geometry and to \cite[Section 9]{gubler_rabinoff_jell:harmonic_trop} for the balancing condition.

\begin{art} \label{tropical notation}
	Let $C$ be a tropical variety of pure dimension $d$ in an open subset $U$ of $\R^n$.  This means that $C$ is a  locally finite $(\Z,\R)$-polyhedral complex $\sC$ in $U$ (up to refinement) with positive constant weights $m$ on the maximal faces  which satisfy the balancing condition.  A function $f\colon C \to \R$ is called \emph{piecewise smooth} with respect to $\sC$ if $f$ is a (continuous) function which is smooth on each face of $\sC$. For such a piecewise smooth function $f$, we can define the \emph{corner locus} $f \cdot C$, which is a tropical cycle of dimension $d-1$ with smooth  (but not necessarily positive) weights in $U$ whose underlying polyhedral complex is as a subcomplex of $\sC$. See \cite[Definition 1.10]{gubler_kunneman:tropical_arakelov} for details. 
\end{art}

\begin{art} \label{tropical psh functions}
  Let $C$ be a tropical variety of pure dimension $d$ and let $f\colon C \to \R$ be a piecewise smooth function as in \artref{tropical notation}.  We will use real valued $(p,q)$-forms on $C$ and the dual notion of Lagerberg currents; for details, see~\cite[\S 2.7]{gubler_rabinoff_jell:harmonic_trop}.  We call $f$ \emph{weakly tropically psh} if $\d'\d''[f]$ is a positive Lagerberg current on $C$. Here, $[f]$ denotes the current associated to $f$ and $\d'\d''[f]$ is in the sense of currents.  We denote the naive differentials in the sense of functions by 
	$\dpa f, \dpb f$ or $\dpa \dpb f$, which give piecewise smooth Lagerberg forms~\cite[Remark 3.11]{gubler_kunneman:tropical_arakelov}. Using the tropical Poincar\'e--Lelong formula \cite[Corollary 3.18]{gubler_kunneman:tropical_arakelov}, we have
	\begin{equation} \label{tropical Poincare-Lelong}
		\d'\d''[f]= [\dpa \dpb f] + \delta_{f\cdot C},
	\end{equation}
	with $	\delta_{f\cdot C}$ the current of integration over the corner locus of $f$.

        We claim that $f$ is weakly tropically psh if and only if $\dpa \dpb f$ is a positive (piecewise smooth) $(1,1)$-form and $f \cdot C$ has nonnegative weight functions.
	The non-trivial direction is to show that if $\d'\d''[f]$ is positive, then both $[\dpa \dpb f]$ and $\delta_{f\cdot C}$ are positive.  To see that, we choose a compactly supported positive $(d-1,d-1)$-form $\eta$ on $C$. Standard arguments using partition of unity give a decreasing sequence of smooth compactly supported functions $\varphi_i$ with $0 \leq \varphi_i \leq 1$ such that $\lim_{i \to \infty} \varphi_i$ is the characteristic function $\chi$ of the support of $f \cdot C$. For every $i \in \N$, we have 
    \begin{equation} \label{first positivity}
        0 \leq \d'\d''[f](\varphi_i\eta)=[\dpa \dpb f](\varphi_i\eta)+\delta_{f\cdot C}(\eta).
    \end{equation}
Similarly, for $\psi_i\coloneqq 1-\varphi_i$, we get 
\begin{equation} \label{second positivity}
	0 \leq \d'\d''[f](\psi_i\eta)=[\dpa \dpb f](\psi_i\eta)
\end{equation}
using that $\psi_i$ vanishes on the support of $f \cdot C$. 
	Note that $\dpa \dpb f \wedge \eta$ is a piecewise smooth form on $C$ which induces a signed measure on $C$, hence the  monotone convergence theorem yields 
	$$\lim_{i \to \infty}[\dpa \dpb f](\varphi_i\eta)= \int_{C} \chi\dpa \dpb f \wedge \eta=0$$
	as the integral of a smooth form over a lower dimensional polyhedral set is zero. By \eqref{first positivity}, we get $\delta_{f\cdot C}(\eta) \geq 0$ which means that $\delta_{f \cdot C}$ is positive. 
	
	Since the $\psi_i$ form an increasing sequence converging to $1-\chi$, the monotone convergence theorem yields similarly
	$$\lim_{i \to \infty}[\dpa \dpb f](\psi_i\eta)=\int_{C} (1-\chi)\dpa \dpb f \wedge \eta=\int_{C} \dpa \dpb f \wedge \eta.$$
	It follows from \eqref{second positivity} that $[\dpa \dpb f](\eta) \geq 0$ which means that $[\dpa \dpb f]$ is positive.
\end{art}

\begin{rem}
  The definition of a weakly tropically psh function is adapted to the corresponding notion of convexity introduced by Botero, Burgos and Sombra in~\cite{botero_burgos_sombra:convex}, at least in the case of piecewise affine functions.  We call them ``weakly'' psh because the authors consider three different notions of concavity for piecewise affine functions on a tropical cycle; our notion corresponds to the weakest, which has the disadvantage that the induced Monge--Amp\`ere measure is not always positive. 
\end{rem}

The following result is the \emph{maximum principle for weakly tropically psh funnctions}.

\begin{prop} \label{tropical maximum principle}
  Let $C$ be a tropical variety of pure dimension $d$ in an open subset $U$ of $\R^n$, and let $f \colon C \to \R$ be a piecewise smooth function that is weakly tropically psh~\artref{tropical psh functions}. If $f$ has a local maximum at $\omega \in C$, then $f$ is constant in a neighbourhood of $\omega$.
\end{prop}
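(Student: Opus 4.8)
The plan is to pass to the tangent cone at $\omega$, replacing $f$ by a homogeneous piecewise linear function $g$ on the star fan of $C$ at $\omega$, and then to cut $g$ down to a one-dimensional tropical cycle by a generic linear section, where the statement becomes an elementary balancing computation.

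\textit{Reduction to the star fan.} After a translation we may take $\omega = 0$. Let $\Sigma$ be the star fan of the polyhedral complex $\mathcal C$ underlying $C$ at $0$, whose maximal cones are the polyhedral cones $\R_{\geq 0}\cdot\sigma$ for $\sigma$ a maximal face of $\mathcal C$ containing $0$, weighted by $m_\sigma$. The balancing condition of $C$ at $0$ is precisely the assertion that $\Sigma$ is a balanced weighted fan, so $\Sigma$ is a tropical variety of pure dimension $d$. Define $g\colon\Sigma\to\R$ by $g(v) = \tfrac{d}{dt}\big|_{t=0^+}f(tv)$ on each cone. Since $f$ is smooth on each face of $\mathcal C$, the function $g$ is well defined, continuous, homogeneous of degree one, and linear on each cone of $\Sigma$, so $\dpa\dpb g = 0$; the local maximum of $f$ at $0$ forces $g\leq 0 = g(0)$; and the weight function of the corner locus $g\cdot\Sigma$ along the cone $\R_{\geq 0}\cdot\tau$ over a codimension-one face $\tau\ni 0$ agrees with the value at $0$ of the weight function of $f\cdot C$ along $\tau$, both being the sum $\sum_{\sigma\supset\tau} m_\sigma\,\langle\nabla(f|_\sigma)(0),u_{\sigma/\tau}\rangle$ of outgoing slopes (cf.\ \cite[Definition~1.10]{gubler_kunneman:tropical_arakelov}), which is nonnegative by weak tropical psh-ness of $f$. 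Thus $g$ is a weakly tropically psh piecewise linear function on $\Sigma$ with maximum $g(0)=0$.

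\textit{From $g$ back to $f$.} Granting that $g\equiv 0$, I conclude as follows. For each maximal face $\sigma$ of $\mathcal C$ with $0\in\sigma$, the function $f|_\sigma$ is smooth and convex because $\dpa\dpb f\geq 0$, and $g\equiv 0$ means its gradient at $0$ vanishes; hence $0$ is a global minimum of $f|_\sigma$ on the convex set $\sigma$, so $f|_\sigma\geq f(0)$. Combined with $f\leq f(0)$ on a neighbourhood of $0$ and the fact that a small ball around $0$ meets only the faces of $\mathcal C$ through $0$, this gives $f\equiv f(0)$ near $0$, as required.

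\textit{Proof that $g\equiv 0$.} Since $g$ is continuous and linear on each cone, it suffices to prove $g(v)=0$ for $v$ in the relative interior of a maximal cone of $\Sigma$. Fix such a $v$ and choose a linear subspace $L\subset\R^n$ of dimension $n-d+1$ containing $v$ and generic with respect to the finitely many cones of $\Sigma$ and of $g\cdot\Sigma$. Then the stable intersection $C' = \Sigma\cdot L$ is a purely one-dimensional balanced tropical cycle (its dimension is $d+(n-d+1)-n=1$) with positive constant weights, since the stable intersection of an effective cycle with a linear subspace is effective; moreover $v$ lies on $C'$ because for generic $L\ni v$ the maximal cone through $v$ contributes a ray through $v$ with positive weight. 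The restriction $g|_{C'}$ is piecewise linear, homogeneous, $\leq 0$, and its corner locus $g\cdot C'=(g\cdot\Sigma)\cdot L$ is effective, so $g|_{C'}$ is weakly tropically psh. Writing $C'=\bigcup_i\rho_i$ as a union of rays from the origin with primitive generators $u_i$ and weights $w_i>0$, the balancing condition reads $\sum_i w_i u_i = 0$ and effectivity of $g\cdot C'$ at the origin reads $\sum_i w_i\,g(u_i)\geq 0$; each summand is $\leq 0$ since $g\leq 0$, so $g(u_i)=0$ for all $i$, whence $g\equiv 0$ on $C'$ and in particular $g(v)=0$. Letting $v$ range over the dense set of relative-interior points of maximal cones of $\Sigma$ gives $g\equiv 0$.

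\textit{Main obstacle.} The delicate step is the one-dimensional reduction: one needs that cutting $\Sigma$ with a generic linear subspace produces a genuine balanced effective tropical cycle of the expected dimension, that this operation commutes with taking corner loci (so that $g|_{C'}$ remains weakly tropically psh), and that the slice still passes through the chosen direction $v$. These are standard consequences of the fan displacement rule and of effectivity of stable intersections with linear spaces, but the weight bookkeeping must be carried out with care; everything else reduces to elementary convexity.
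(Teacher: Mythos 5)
Your proof is correct, but it is organized quite differently from the paper's. The paper argues by induction on $d$: it restricts the piecewise smooth function $f$ itself to a slice of $\Sigma$ by a generic rational hyperplane through $0$ and a point where $f$ takes a different value, checks that weak tropical psh-ness survives the slicing via the compatibility $(f\cdot C)\cdot H=f\cdot(C\cdot H)$ of \cite[Proposition~1.14]{gubler_kunneman:tropical_arakelov}, and settles the base case $d=1$ by the outgoing-slope plus convexity computation. You instead linearize first: you replace $f$ by its first-order part $g$ on the star fan at $\omega$, prove $g\equiv 0$ by a single stable intersection with a generic linear subspace of codimension $d-1$ through a chosen direction (reducing to the balancing identity $\sum_i w_iu_i=0$ at the origin), and then recover local constancy of $f$ from convexity of $f|_\sigma$ on each face together with the local maximum. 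This buys you a proof with no induction and no contradiction argument, and it cleanly separates the ``slopes vanish'' step (pure tropical intersection theory for a PL function) from the ``convexity'' step; a further small advantage is that the slicing compatibility is only needed for the PL function $g$, not for a piecewise smooth one. The price is that you invoke, as ``standard'', the identity $g\cdot(\Sigma\cdot L)=(g\cdot\Sigma)\cdot L$ and the effectivity and positivity-of-weight-at-$v$ properties of stable intersection with a higher-codimension linear space; these are indeed available (fan displacement rule, commutativity of divisor intersections, or simply by writing $L$ as an iterated generic hyperplane slice and applying the same \cite[Proposition~1.14]{gubler_kunneman:tropical_arakelov} the paper uses), but they should be cited precisely — the paper's hyperplane-by-hyperplane induction is designed exactly so that only the codimension-one case of this compatibility is needed. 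Two cosmetic points: your displayed weight formula for the corner locus omits the correction term that makes it independent of the choice of the representatives $u_{\sigma/\tau}$ (harmless, since the comparison of the two weights holds with the correct formula), and the nonnegativity of the weight of $f\cdot C$ at the vertex $0$ should be justified by continuity of the weight function up to the boundary of the codimension-one face.
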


\begin{proof}
	This is a local statement, so we may assume that $\omega=0$ and that $U$ is an open ball with center $0$. Moreover, we may assume that there is a tropical fan $\Sigma$ of pure dimension $d$ in $\R^n$ such that $C=\Sigma \cap U$ and such that for each cone $\sigma$ of $\Sigma$, the function $f|_\sigma$ is smooth. Then we claim  $f$ is constant. Clearly, this yields the proposition.
	
	We prove the claim by induction on $d$. If $d=0$, then the claim is obvious, but we need also to prove the case $d=1$. Then $C$ is a  graph with weights $m_e$ along the edges $e$ satisfying the balancing condition. The corner locus is then supported in $\omega=0$ where it has multiplicity
	$$m = \sum_e m_e\,\frac{\d f}{\d t_e}(0) $$
	where $e$ ranges over all semiopen edges of $C=\Sigma \cap U$, where $t_e$ denotes the parametrization of the edge $e$ and where $\frac{\d f}{\d t_e}(v)$ is the outgoing slope at $0$ along $e$. Since $\omega=0$ is a local maximum and $m_e >0$ for all edges $e$, we conclude that $\frac{\d f}{\d t_e}(0) \leq 0$ for all the outgoing slopes. Since $f$ is weakly tropically psh, it follows from \artref{tropical psh functions} that all these outgoing slopes are $0$. Moreover, we also have seen that $\dpa \dpb f$ is a positive form on $e$, which means that $(\frac{\d}{\d t_e})^2 f  \geq 0$ on $e$, and hence $f$ is increasing on $e$. Since $\omega$ is a local maximum, we conclude that $f$ is constant on every edge $e$, which proves the case $d=1$.
	
	Now assume $d \geq 2$. We assume that $f$ is not constant on $C=U \cap \Sigma$. Then there is $\omega' \in C$ such that $f(\omega')\neq f(\omega)$. By density of the rational points in $|\Sigma|$, we may assume that $\omega' \in \Q^n$. We pick a generic rational linear hyperplane $H$ through $\omega'$ (and $0$). Using tropical intersection theory~\cite{allermann_rau10:tropical_intersection_thy}, we get a $(d-1)$-dimensional effective tropical fan $\Sigma' \coloneqq H \cdot \Sigma$. Using that $H$ is generic and that $d \geq 2$, this is a proper intersection and hence $\Sigma'$ has support equal to $|\Sigma|\cap H$. It follows that
	$\omega'$ is contained in the $(d-1)$-dimensional tropical variety $C' \coloneqq \Sigma' \cap U$ and that $f' \coloneqq f|_{C'}$ is smooth on each face of $C'$. We claim that $f'$ is weakly tropically psh. Assuming this, we apply the induction hypothesis to deduce that $f'$ is constant, which contradicts $f'(\omega)\neq f'(\omega')$. This will prove $f$ is constant on $C =\Sigma \cap U$. 
	
	To see that $f'$ is weakly tropically psh, we first note that $\dpa \dpb f$ is a positive piecewise smooth form on $C$ as we have seen in \artref{tropical psh functions}. We conclude that $\dpa\dpb f'=(\dpa \dpb f)|_{C'}$ is a positive piecewise smooth form on $C'$. On the other hand, it is shown in \cite[Proposition 1.14]{gubler_kunneman:tropical_arakelov} that $f'\cdot C'=f'\cdot (C\cdot H)= (f\cdot C) \cdot H$ holds, and hence the corner locus $f' \cdot C'$ has non-negative smooth weight functions. It follows from \artref{tropical psh functions} that $f'$ is weakly tropically psh. 
\end{proof}

				\begin{art} \label{piecewise smooth forms}
				Let $X$ be a good strictly analytic space over $k$. Recall that Chambert--Loir and Ducros \cite{chambert_ducros12:forms_courants} introduced (real valued) smooth $(p,q)$ forms on $X$ by using pull-backs of smooth Lagerberg forms  with respect to smooth tropicalization maps and a sheafification process. This leads to a bigraded sheaf of smooth forms on $X$ with natural differentials $\d'$ and $\d''$.
				
				Replacing smooth Lagerberg forms by piecewise smooth Lagerberg forms, the same process leads to a bigraded sheaf of piecewise smooth forms on $X$ with differentials $\dpa$ and $\dpb$.  More explicitly, in an affinoid neighborhood $U$ of a point of $x$, a piecewise smooth form is equal to the pullback of a piecewise smooth Lagerberg form $\alpha$ under a smooth tropicalization map $\phi_{\trop}\colon U\to\R^n$.  See \cite[Section 10]{gubler_rabinoff_jell:harmonic_trop} for more details.
				
				The piecewise smooth forms of bidegree $(0,0)$ are special continuous functions which we call \defi{piecewise smooth functions}. It is clear that every $\R$-PL function is a piecewise smooth function where we  choose all the $\omega_i$'s above to be affine functions. 
				\end{art}
				
				\begin{art} \label{currents and psh}
				Chambert-Loir and Ducros introduce currents on a separated good strictly analytic space $X$ as a dual notion to smooth forms \cite[Section 4]{chambert_ducros12:forms_courants}. These also form a bigraded sheaf with natural differentials $\d'$ and $\d''$.  For any continuous function $h\colon X \to \R$, there is an associated current $[h]$ defined by integrating $h$ against compactly supported smooth forms, as explained in~\cite[\S 5.4]{chambert_ducros12:forms_courants}. The function $h$ is called \emph{plurisubharmonic} (or \defi{psh} for short) if $\d'\d''[h]$ is a positive current.  See \cite[\S 5.5]{chambert_ducros12:forms_courants} for details.
				
				Note that plurisubharmonicity is a local property, so it is defined even without assuming $X$ to be separated.

                                By~\cite[Theorem~7.14]{gubler_rabinoff_jell:harmonic_trop}, a semipositive $\R$-PL function is a plurisubharmonic piecewise smooth function in this sense.
				\end{art}
				
				We  prove now the \emph{maximum principle for piecewise smooth psh functions}.
				
				\begin{thm} \label{thm: maximum principle for semipositive functions}
                                  Let $X$ be a good strictly analytic space over $K$ and let $h$ be a piecewise smooth psh function on $X$. If $h$ has a local maximum at $x \in X \setminus \partial X$, then $h$ is locally constant at $x$.  
				\end{thm}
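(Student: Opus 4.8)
The plan is to deduce this from the tropical maximum principle (Proposition~\ref{tropical maximum principle}) by passing to a local, tropical picture, with the affinoid Bieri--Groves theorem (Theorem~\ref{affinoid Bieri-Groves theorem}) supplying the pure-dimensionality required to apply it. Since the conclusion is local at $x$, I would first replace $X$ by a compact strictly affinoid neighborhood $V$ of $x$ on which $h|_V = f\circ\varphi_{\trop}$ for a smooth tropicalization map $\varphi_{\trop}\colon V\to\R^n$ and a smooth function $f\colon\R^n\to\R$, as in \artref{piecewise smooth forms}; shrinking $V$ so that $x$ lies in the topological interior of $V$ in $X$, we have $x\notin\partial V$ by~\cite[Proposition~3.1.3]{berkovic90:analytic_geometry}. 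Treating the finitely many irreducible components of $V$ through $x$ separately --- each inherits the psh property, positivity of the Chambert-Loir--Ducros current being local and compatible with the decomposition into components, each still has $x$ in its relative interior (a closed immersion is boundaryless), and together they cover a neighborhood of $x$ --- we may assume $V$ is irreducible of dimension $d$. Put $\omega = \varphi_{\trop}(x)$ and $C = \varphi_{\trop}(V)$.

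The first main point is that, after a further shrinking of $V$, the set $C$ is a tropical variety of pure dimension $d$ in a neighborhood of $\omega$ in the sense of \artref{tropical notation}. Since $\varphi_{\trop}(\partial V)$ is contained in a polyhedral set of dimension $<d$ (Ducros, \artref{smooth tropicalization}), Theorem~\ref{affinoid Bieri-Groves theorem} shows that $C$ has pure dimension $d$ at every point of $C\setminus\varphi_{\trop}(\partial V)$. Using $x\notin\partial V$ together with the stabilization of germs and the pure-dimensionality of the germ $\varphi_{\trop}(X,x)$ from Theorem~\ref{Ducros results about tropical germs} --- after enlarging the moment map so that $\dim C = d$ --- I would shrink $V$ so that $\omega\notin\varphi_{\trop}(\partial V)$; then $C$ is pure-$d$-dimensional near $\omega$, and its tropical multiplicities make it a weighted, balanced polyhedral complex there (\artref{smooth tropicalization}, \cite[\S 9]{gubler_rabinoff_jell:harmonic_trop}).

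The second main point is that $f$ is weakly tropically psh on $C$ near $\omega$ (\artref{tropical psh functions}). Since $\varphi_{\trop}$ is a smooth tropicalization, $\dpa\dpb h = \varphi_{\trop}^*(\dpa\dpb f)$ as piecewise smooth $(1,1)$-forms on $V$. Splitting $\d'\d''[h]$ into its smooth part $[\dpa\dpb h]$ and its singular part exactly as in the argument of \artref{tropical psh functions}, positivity of $\d'\d''[h]$ forces both parts to be positive; the singular part is governed by the corner locus $f\cdot C$, and, because $C$ is pure-$d$-dimensional so that $\varphi_{\trop}$ restricts to a finite surjective piecewise linear map $S_\varphi(V)\to C$ (\artref{tropical skeleton}), this positivity descends through $\varphi_{\trop}$ to give that $\dpa\dpb f$ is a positive form on $C$ and that $f\cdot C$ has nonnegative weights. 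By \artref{tropical psh functions}, $f$ is weakly tropically psh. Finally, $h = f\circ\varphi_{\trop}$ has a local maximum at $x$, and $\varphi_{\trop}$ maps every neighborhood of $x$ onto a neighborhood of $\omega$ in $C$ (Theorem~\ref{Ducros results about tropical germs}), so $f$ has a local maximum at $\omega$ on $C$; by Proposition~\ref{tropical maximum principle}, $f$ is constant on a neighborhood $\Omega$ of $\omega$ in $C$, whence $h\equiv f(\omega)$ on the open neighborhood $\varphi_{\trop}^{-1}(\Omega)$ of $x$. Thus $h$ is locally constant at $x$.

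I expect the second main point to be the principal obstacle: transferring positivity of the Chambert-Loir--Ducros current $\d'\d''[h]$ on $V$ to positivity of the Lagerberg current $\d'\d''[f]$ on $C$ must be handled carefully, since a smooth tropicalization can drop dimension; the descent of positivity through $\varphi_{\trop}$ genuinely uses the pure-dimensionality from Theorem~\ref{affinoid Bieri-Groves theorem}, and points where $d_\varphi(V,x)<\dim V$ --- for instance non-Abhyankar points --- force one to work on a Zariski-closed subspace of the right dimension. Arranging $\omega\notin\varphi_{\trop}(\partial V)$ in the first main point is the other delicate step.
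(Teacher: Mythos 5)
Your overall strategy is the paper's: reduce to a pure-dimensional strictly affinoid neighborhood, write $h=F\circ\varphi_{\rm trop}$, arrange that $\omega=\varphi_{\rm trop}(x)$ avoids $\varphi_{\rm trop}(\partial V)$, invoke Theorem~\ref{affinoid Bieri-Groves theorem} for pure dimensionality, show the tropical function is weakly tropically psh, and apply Proposition~\ref{tropical maximum principle}. However, there is a genuine gap exactly at the step you flag as delicate: you propose to achieve $\omega\notin\varphi_{\rm trop}(\partial V)$ by ``shrinking $V$'' (after enlarging the moment map only to fix the dimension of $C$). Shrinking cannot work in general: if the fiber $\varphi_{\rm trop}^{-1}(\omega)$ through $x$ is positive-dimensional, then the topological boundary of every small compact neighborhood $V$ of $x$ meets this fiber, and since $\partial V$ is the union of $\partial X\cap V$ with that topological boundary \cite[Proposition~3.1.3]{berkovic90:analytic_geometry}, one always has $\omega\in\varphi_{\rm trop}(\partial V)$ for the \emph{same} moment map. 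What is needed is to enlarge the moment map by functions chosen specifically to separate $x$ tropically from the boundary. The paper does this with \cite[Lemme~3.4.3]{chambert_ducros12:forms_courants}: there are analytic $f_1,\dots,f_m$ with $|f_i|\leq 1$ on $X$, $|f_i(x)|<1$, and for each $y\in\partial X$ some $|f_i(y)|=1$; after perturbing so $f_i(x)\neq 0$ and choosing $r$ suitably, the Laurent domain $U=\bigcap\{-\log|f_i|\leq r\}$ and the tropicalization $h_0=(f_1,\dots,f_m)_{\rm trop}$ satisfy $h_0(\partial U)\subset\partial([0,r]^m)$ while $h_0(x)\in(0,r)^m$. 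One then takes the product of the original tropicalization with $h_0$. Without an argument of this kind your first main point does not go through, and then neither the Bieri--Groves input nor the balancing of $C$ near $\omega$ is available.

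Your second main point is also not carried out correctly as stated. Transferring positivity does not require (and is not justified by) a ``descent through the tropical skeleton'': the splitting into $[\dpa\dpb f]$ plus the corner-locus current in \artref{tropical psh functions} is a statement on the tropical cycle, and finiteness of $S_\varphi(V)\to C$ gives no mechanism for pushing positivity of the Chambert-Loir--Ducros current on $V$ down to the two tropical positivity statements. The paper's mechanism is much simpler and is the one you should use: once $\omega\notin h'(\partial X)$ is arranged, set $\Omega=h'(X)\setminus h'(\partial X)$; by \cite[Th\'eor\`eme~3.6.1]{chambert_ducros12:forms_courants} the image $h'(X)$ is a balanced $d$-dimensional tropical cycle on $\Omega$ (pure dimension $d$ there by Theorem~\ref{affinoid Bieri-Groves theorem}), and for every positive compactly supported Lagerberg form $\eta$ on $\Omega$ one has the pairing identity $\langle \eta,\d'\d''[F]\rangle=\langle (h')^*\eta,\d'\d''[h]\rangle\geq 0$, so $F$ is weakly tropically psh on $\Omega$ by definition; the analysis of \artref{tropical psh functions} is then only used inside the proof of Proposition~\ref{tropical maximum principle}. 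Finally, your appeal to germ stabilization to transfer the local maximum is unnecessary (and slightly shaky); it is cleaner to shrink at the outset so that $h$ has a \emph{global} maximum at $x$, whence $F\leq F(\omega)$ on all of $h'(X)$, as the paper does. Your treatment of irreducible components and of $x\notin\partial V$ is fine.
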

				
				\begin{proof}
					First we replace $X$ by an irreducible component containing $x$ to assume that $X$ has pure dimension $d$.
					By \artref{piecewise smooth forms},  there is a strictly affinoid neighbourhood $U$ of $x$,  a smooth tropicalization map $h'\colon U \to \R^n$, and a piecewise smooth function $F \colon h'(U) \to \R$ such that $h|_U=F \circ h'$ on $U$.   
                                        Since the boundary of $U$ is the boundary of $X$ together with the topological boundary of $U$ in $X$ \cite[Proposition 3.1.3]{berkovic90:analytic_geometry}, we conclude $x \not \in \partial U$ and so we may assume $U=X$. In particular, $X$ is strictly affinoid. We may also assume that $h$ has a global maximum at $x$.

					We claim that there exists a strictly affinoid neighborhood $U$ of $x$ and a smooth tropicalization $h_0\colon U\to\R^m$ such that $h_0(x)\notin h_0(\del U)$.  By~\cite[Lemme~3.4.3]{chambert_ducros12:forms_courants}, there exist analytic functions $f_1,\ldots,f_m$ on $X$ such that:
					\begin{enumerate}
						\item $|f_i|\leq 1$ on $X$ for each $i$,
						\item $|f_i(x)|<1$ for each $i$, and
						\item for every $y\in\del X$, there is some $i$ with $|f_i(y)|=1$.
					\end{enumerate}
					(Here we can take all spectral radii equal to $1$ for simplicity since $X$ is strictly affinoid.)  If $f_i(x) = 0$ for some $i$, then we may replace $f_i$ by $f_i+\lambda$ for any $\lambda\in K^\times$ with $|\lambda| < 1$ without affecting properties (1)--(3) above; thus we assume $f_i(x)\neq 0$ for all $i$.  Choose $r\in\log\sqrt{|K^\times|}$ such that $-\log|f_i(x)| < r$ for all $i$, let $U\subset X$ be the Laurent domain $\bigcap_{i=1}^m\{-\log|f_i|\leq r\}$, and let $h_0 = (f_1,\ldots,f_m)_{\trop}\colon U\to\R^m$.  Then $h_0(U\cap\del X)$ is contained in the union of the coordinate hyperplanes, and $h_0\inv((-\infty,r)^m) = \bigcap_{i=1}^m\{-\log|f_i|<r\}$ is contained in the topological interior of $U$ in $X$ since it is open in $X$. Using that $\del U$ is equal to $U\cap\del X$ union the topological boundary of $U$ in $X$ \cite[Proposition 3.1.3]{berkovic90:analytic_geometry}, it follows that $h_0(\del U)$ is contained in the boundary of the box $[0,r]^m$.  Since $h_0(x)\in(0,r)^m$, the claim follows.

					Replacing $X$ by $U$ and $h'$ by the product of $h'$ and $h_0$, we may assume both that $h = F\circ h'$ for a piecewise  smooth function $F\colon h'(X)\to\R$, and that $h'(x)\notin h'(\del X)$.  By the Bieri--Groves theorem for affinoids (Theorem  \ref{affinoid Bieri-Groves theorem}), the tropical fan of $h'(X)$ at $\omega \coloneqq h'(x)$ has pure dimension $d$.

					Let $\Omega \coloneqq h'(X)\setminus h'(\partial X)$; this is an open neighbourhood of $\omega$ in $h'(X)$. For any positive Lagerberg form $\eta \in A^{d-1,d-1}(\Omega)$ with compact support in $\Omega$, we have 
					\begin{equation}\label{current identity} \langle \eta, \d'\d''[F] \rangle = \langle (h')^*(\eta), \d'\d''[h] \rangle \geq 0
					\end{equation} 
					because $h$ is psh.  
					On the other hand, Chambert-Loir and Ducros \cite[Th\'eor\`eme 3.6.1]{chambert_ducros12:forms_courants} have shown that $h'(X)$ is a $d$-dimensional tropical cycle---that is, it is \emph{balanced}---on the neighbourhood $\Omega$ of $\omega$.  
					It follows from \eqref{current identity} that $F$ is a weakly tropically psh piecewise smooth function on $\Omega$. By the tropical maximum principle  shown in Proposition \ref{tropical maximum principle}, the function $F$ is constant in a neighbourhood of $\omega$. Since $h=F \circ h'$, this proves the maximum principle for $h$.
				\end{proof}

\bibliographystyle{egabibstyle}
\bibliography{papers}

\providecommand{\noopsort}[1]{}\def\cprime{$'$}
\providecommand{\bysame}{\leavevmode\hbox to3em{\hrulefill}\thinspace}
\providecommand{\MR}{\relax\ifhmode\unskip\space\fi MR }
\providecommand{\MRhref}[2]{%
  \href{http://www.ams.org/mathscinet-getitem?mr=#1}{#2}
}
\providecommand{\href}[2]{#2}
\begin{thebibliography}{GRW17}

\bibitem[AR10]{allermann_rau10:tropical_intersection_thy}
L.~Allermann and J.~Rau, \emph{First steps in tropical intersection theory},
  Math. Z. \textbf{264} (2010), no.~3, 633--670. \MR{2591823}

\bibitem[Ber90]{berkovic90:analytic_geometry}
V.~G. Berkovich, \emph{Spectral theory and analytic geometry over
  non-{A}rchimedean fields}, Mathematical Surveys and Monographs, vol.~33,
  American Mathematical Society, Providence, RI, 1990. \MR{1070709 (91k:32038)}

\bibitem[Ber93]{berkovic93:etale_cohomology}
\bysame, \emph{\'{E}tale cohomology for non-{A}rchimedean analytic spaces},
  Inst. Hautes \'Etudes Sci. Publ. Math. (1993), no.~78, 5--161 (1994).
  \MR{1259429 (95c:14017)}

\bibitem[BG84]{bieri_groves84:geometry_set_character_induced_valuation}
R.~Bieri and J.~R.~J. Groves, \emph{The geometry of the set of characters
  induced by valuations}, J. Reine Angew. Math. \textbf{347} (1984), 168--195.
  \MR{733052 (86c:14001)}

\bibitem[Bos14]{bosch14:lectures_formal_rigid_geometry}
S.~Bosch, \emph{Lectures on formal and rigid geometry}, Lecture Notes in
  Mathematics, vol. 2105, Springer, Cham, 2014. \MR{3309387}

\bibitem[BGR84]{bosch_guntzer_remmert84:non_archimed_analysis}
S.~Bosch, U.~G{\"u}ntzer, and R.~Remmert, \emph{Non-{A}rchimedean analysis},
  Grundlehren der Mathematischen Wissenschaften, vol. 261, Springer-Verlag,
  Berlin, 1984. \MR{746961 (86b:32031)}

\bibitem[BL93a]{BL1}
S.~Bosch and W.~L{\"u}tkebohmert, \emph{Formal and rigid geometry. {I}. {R}igid
  spaces}, Math. Ann. \textbf{295} (1993), no.~2, 291--317. \MR{1202394
  (94a:11090)}

\bibitem[BL93b]{bosch_lutkeboh93:formal_rigid_geometry_II}
\bysame, \emph{Formal and rigid geometry. {II}. {F}lattening techniques}, Math.
  Ann. \textbf{296} (1993), no.~3, 403--429. \MR{1225983 (94e:11070)}

\bibitem[BGS22]{botero_burgos_sombra:convex}
Ana~Mar\'ia Botero, Jos\'e{} Ignacio~Burgos Gil, and Mart\'in Sombra,
  \emph{Convex analysis on polyhedral spaces}, Math. Z. \textbf{301} (2022),
  no.~2, 1631--1674. \MR{4418333}

\bibitem[BJ22]{BJ22globalpluripotentialtheorytrivially}
S\'ebastien Boucksom and Mattias Jonsson, \emph{Global pluripotential theory
  over a trivially valued field}, Ann. Fac. Sci. Toulouse Math. (6) \textbf{31}
  (2022), no.~3, 647--836. \MR{4452253}

\bibitem[CD12]{chambert_ducros12:forms_courants}
A.~Chambert{--}Loir and A.~Ducros, \emph{Forms diff\'erentielles r\'eelles et
  courants sur les espaces de berkovich}, 2012, preprint available at
  \url{http://arxiv.org/abs/1204.6277}.

\bibitem[CL06]{chambertloir06:mesures_equidist}
A.~Chambert-Loir, \emph{Mesures et \'equidistribution sur les espaces de
  {B}erkovich}, J. Reine Angew. Math. \textbf{595} (2006), 215--235.
  \MR{2244803 (2008b:14040)}

\bibitem[Duc03]{ducros03:image_reciproque}
A.~Ducros, \emph{Image r\'eciproque du squelette par un morphisme entre espaces
  de {B}erkovich de m\^eme dimension}, Bull. Soc. Math. France \textbf{131}
  (2003), no.~4, 483--506. \MR{2044492 (2004m:14042)}

\bibitem[Duc12]{ducros12:squelettes_modeles}
\bysame, \emph{Espaces de {B}erkovich, polytopes, squelettes et th\'eorie des
  mod\`eles}, Confluentes Math. \textbf{4} (2012), no.~4, 1250007, 57.
  \MR{3020334}

\bibitem[Duc18]{ducros18:families}
\bysame, \emph{Families of {B}erkovich spaces}, Ast\'{e}risque (2018), no.~400,
  vii+262. \MR{3826929}

\bibitem[GS90]{gillet_soule_90}
Henri Gillet and Christophe Soul\'e, \emph{Arithmetic intersection theory},
  Inst. Hautes \'Etudes Sci. Publ. Math. (1990), no.~72, 93--174. \MR{1087394}

\bibitem[Gub98]{gubler98:local_heights_subvariet}
W.~Gubler, \emph{Local heights of subvarieties over non-{A}rchimedean fields},
  J. Reine Angew. Math. \textbf{498} (1998), 61--113. \MR{1629925 (99j:14022)}

\bibitem[Gub07]{gubler07:tropical_varieties}
\bysame, \emph{Tropical varieties for non-{A}rchimedean analytic spaces},
  Invent. Math. \textbf{169} (2007), no.~2, 321--376. \MR{2318559
  (2008k:14085)}

\bibitem[GK17]{gubler_kunneman:tropical_arakelov}
W.~Gubler and K.~K\"{u}nnemann, \emph{A tropical approach to nonarchimedean
  {A}rakelov geometry}, Algebra Number Theory \textbf{11} (2017), no.~1,
  77--180. \MR{3602767}

\bibitem[GK19]{gubler_kuenneman19:positivity}
\bysame, \emph{Positivity properties of metrics and delta-forms}, J. Reine
  Angew. Math. \textbf{752} (2019), 141--177. \MR{3975640}

\bibitem[GM19]{gubler_martin19:zhangs_metrics}
W.~Gubler and F.~Martin, \emph{On {Z}hang's semipositive metrics}, Doc. Math.
  \textbf{24} (2019), 331--372. \MR{3960125}

\bibitem[GRW17]{gubler_rabinoff_werner:tropical_skeletons}
W.~Gubler, J.~Rabinoff, and A.~Werner, \emph{Tropical skeletons}, Ann. Inst.
  Fourier (Grenoble) \textbf{67} (2017), no.~5, 1905--1961. \MR{3732680}

\bibitem[GJR21]{gubler_rabinoff_jell:harmonic_trop}
Walter Gubler, Philipp Jell, and Joseph Rabinoff, \emph{Forms on {B}erkovich
  spaces based on harmonic tropicalizations}, 2021, preprint available at
  \url{http://arxiv.org/abs/2111.05747}.

\bibitem[{Sta}21]{stacks-project}
The {Stacks project authors}, \emph{The stacks project},
  \url{https://stacks.math.columbia.edu}, 2021.

\bibitem[Tem00]{temkin00:local_properties}
M.~Temkin, \emph{On local properties of non-{A}rchimedean analytic spaces},
  Math. Ann. \textbf{318} (2000), no.~3, 585--607. \MR{1800770 (2001m:14037)}

\bibitem[Tem04]{temkin04:local_properties_II}
\bysame, \emph{On local properties of non-{A}rchimedean analytic spaces. {II}},
  Israel J. Math. \textbf{140} (2004), 1--27. \MR{2054837 (2005c:14030)}

\bibitem[Thu05]{thuillier05:thesis}
A.~Thuillier, \emph{Th{\'e}orie du potentiel sur les courbes en
  g{\'e}om{\'e}trie analytique non archim{\'e}dienne. {A}pplications {\`a} la
  th{\'e}orie d'{A}rakelov}, Ph.D. thesis, University of Rennes, 2005, preprint
  available at
  \url{http://tel.archives-ouvertes.fr/docs/00/04/87/50/PDF/tel-00010990.pdf}.

\bibitem[Ull98]{ullmo98:positivite_manin_mumford}
E.~Ullmo, \emph{Positivit\'e et discr\'etion des points alg\'ebriques des
  courbes}, Ann. of Math. (2) \textbf{147} (1998), no.~1, 167--179.

\bibitem[vdP80]{put80:class_group}
Marius van~der Put, \emph{The class group of a one-dimensional affinoid space},
  Ann. Inst. Fourier (Grenoble) \textbf{30} (1980), no.~4, 155--164.
  \MR{599628}

\bibitem[Vil21]{vilsmeier21:monge_ampere}
C.~Vilsmeier, \emph{A comparison of the real and non-archimedean
  {M}onge-{A}mp\`ere operator}, Math. Z. \textbf{297} (2021), no.~1-2,
  633--668. \MR{4204708}

\bibitem[Wan19]{wanner19:subharmonicity}
V.~Wanner, \emph{Comparison of two notions of subharmonicity on non-archimedean
  curves}, Math. Z. \textbf{293} (2019), no.~1-2, 443--474. \MR{4002285}

\bibitem[Zha95]{zhang95}
S.~Zhang, \emph{Small points and adelic metrics}, J. Algebraic Geom. \textbf{4}
  (1995), no.~2, 281--300. \MR{1311351 (96e:14025)}

\bibitem[Zha98]{zhang98}
S.-W. Zhang, \emph{Equidistribution of small points on abelian varieties}, Ann.
  of Math. (2) \textbf{147} (1998), no.~1, 159--165.

\end{thebibliography}

\end{document}